\numberwithin{equation}{section}
\newtheorem{theorem}{Theorem}[section]
\newtheorem{lemma}[theorem]{Lemma}
\newtheorem{proposition}[theorem]{Proposition}
\newtheorem{definition}[theorem]{Definition}
\newtheorem{corollary}[theorem]{Corollary}
\theoremstyle{remark}
\newtheorem{example}[theorem]{Example}
\newtheorem{remark}[theorem]{Remark}
\newcommand{\Bip}{\operatorname{Bip}}
\newcommand{\eps}{\varepsilon}
\newcommand{\inc}[1]{\sim _{#1}}
\newcommand{\rank}{\operatorname{rk}}
\newcommand{\rest}[2] {#1\left| {_{#2}}\right. }
\newcommand{\vanish}[1]{}
\begin{document}

\title{The poset of bipartitions}

\author[G\'abor Hetyei and Christian Krattenthaler]{G\'abor Hetyei
\and
Christian Krattenthaler$^\dagger$
}

\address{Department of Mathematics and Statistics,
  UNC-Charlotte, Charlotte NC 28223-0001.
WWW: \tt http://www.math.uncc.edu/\~{}ghetyei/.}

\address{Fakult\"at f\"ur Mathematik, Universit\"at Wien,
Nordbergstra{\ss}e~15, A-1090 Vienna, Austria.
WWW: \tt http://www.mat.univie.ac.at/\~{}kratt.}

\thanks{$^\dagger$Research partially supported by the Austrian
Science Foundation FWF, grants Z130-N13 and S9607-N13,
the latter in the framework of the National Research Network
``Analytic Combinatorics and Probabilistic Number Theory"}

\subjclass [2000]{Primary 06A07; Secondary 05A18 06A06 55P15}

\keywords {bipartitions, set partitions, order complex,
M\"obius function, homotopy equivalence, discrete Morse theory}

\begin{abstract}
Bipartitional relations were introduced by
Foata and Zeilberger in their characterization
of relations which give rise to equidistribution of the associated
inversion statistic and 
major index. We consider the natural partial order on
bipartitional relations given by inclusion. We show that, with respect
to 
this partial order, the bipartitional relations on a set of size $n$
form a graded lattice of rank $3n-2$. Moreover, we prove that the order
complex of this lattice is homotopy equivalent to a sphere of
dimension $n-2$. 
Each proper interval in this lattice has either a contractible order
complex, or it is isomorphic to the direct product of Boolean lattices
and smaller lattices of bipartitional relations.
As a consequence, we obtain that
the M\"obius function of every interval is $0$, $1$, or $-1$.
The main tool in the proofs is discrete Morse theory as developed by
Forman, and an application of this theory to order complexes of graded
posets, designed by Babson and Hersh, in the extended form of
Hersh and Welker.
\end{abstract}

\maketitle

\section{Introduction}

The poset of {\it partitions} $\Pi_n$ of the set $\{1,2,\dots,n\}$, 
where the order is
defined by refinement, is a classical object in combinatorics.
Various aspects of this poset have been studied in the literature
(cf.\ \cite[Ch.~3]{Stanley-EC1}). In particular, its M\"obius function
has been computed by Sch\"utzenberger and by Frucht and Rota
independently (cf.\ \cite[p.~359]{RotaAE}), and the homotopy type of its
order complex is a wedge of spheres. (The latter follows from the
well-known fact that $\Pi_n$ is a geometric lattice, and from
Bj\"orner's result \cite{BjoeAA} that geometric lattices are
shellable.) 

Closely related, and more relevant to the present work,  
is the poset of {\it ordered partitions} of $\{1,2,\dots,n\}$. 
It has a much simpler structure; for example, all intervals in this
poset are isomorphic to products of Boolean lattices.

{\it Bipartitional relations} ({\it bipartitions}, for short)  
were introduced by Foata and
Zeilberger~\cite{FoZeAE}, 
who showed that these are the relations $U$ for
which the (appropriately generalized) major index
$\operatorname{maj}_U$ and 
inversion number $\operatorname{inv}_U$ are equidistributed
on all rearrangement classes. Han \cite[Th.~5]{GNH} showed that these
bipartitional relations can be axiomatically characterized 
as the relations $U$ for which $U$ {\it and\/} its complement are
transitive.
(Cf.\ \cite{Foata--Kratt,Kasraoui} for further work on questions of
this kind.)

\begin{figure}[h]
\begin{center}
\input{bip2.pstex_t}
\end{center}
\caption{$\Bip(\{1,2\})$}
\label{fig:bip2}
\end{figure}

Bipartitional relations on $\{1,2,\dots,n\}$ 
carry a natural poset structure, the 
partial order being defined by inclusion of relations.
We denote the corresponding {\it poset of bipartitions} 
by $\Bip(\{1,2,\dots,n\})$. Figure~\ref{fig:bip2} shows the Hasse diagram of 
$\Bip(\{1,2\})$. The poset $\Bip(\{1,2,\dots,n\})$
contains the poset of ordered partitions of $\{1,2,\dots,n\}$
{\it and\/} its dual as subposets, and therefore can be considered as
a common extension of the two. It turns out that the richness of the
structure of the poset of bipartitions is comparable to that of the
lattice of partitions. To begin with, $\Bip(\{1,2,\dots,n\})$ is a
graded lattice of total rank $3n-2$
(see Theorem~\ref{thm:lattice} and Corollary~\ref{cor:rank}), 
although it is neither modular (cf.\
Example~\ref{ex:modular}) nor geometric, in fact not even 
Cohen--Macaulay (cf.\ Corollary~\ref{cor:nshell}). 
Furthermore, the
M\"obius function of each interval is $0$, $1$, or $-1$
(see Definition~\ref{def:regular}, Corollaries~\ref{C_MobBip} and 
\ref{C_Mobprod}, and Theorem~\ref{T_irreg} for the precise statement of
which intervals take which M\"obius function values). We show this
by proving the stronger result that the order complex of 
$\Bip(\{1,2,\dots,n\})$ is homotopy equivalent to a sphere
(see Theorem~\ref{T_fullc}), and each
proper interval is either the direct product of Boolean lattices and
smaller lattices of bipartitions, or has a contractible order
complex (see Proposition~\ref{P_regprod} and Theorem~\ref{T_irreg}).
The proofs of these facts form the most difficult part of our paper. 
They are essentially based on an adaptation of the Gray code of permutations
due to Johnson \cite{JohnAZ} and Trotter \cite{Trotter} and on 
work of Babson and Hersh \cite{Babson-Hersh}
(in the extended form by Hersh and Welker
\cite{Hersh-Welker}) constructing a discrete
Morse function in the sense of Forman 
\cite{Forman-advmath,Forman-trans-ams,Forman-slc} for the order complex of a
graded poset. The former is
needed to decompose $\Bip(\{1,2,\dots,n\})$ into a union of
distributive lattices in a shelling-like manner. This decomposition is
then refined using the well-known $EL$-shelling of distributive lattices
in order to obtain an enumeration of the maximal chains of
$\Bip(\{1,2,\dots,n\})$ to which the
results of Babson, Hersh, and Welker apply. 
(As Example~\ref{e_nonplo} shows, our enumeration of the maximal chains
of $\Bip(\{1,2,\dots,n\})$ is not a poset lexicographic order in the
sense of \cite{Babson-Hersh}, so that we do indeed need the extended
form observed in \cite{Hersh-Welker}.)
We remark that our ``two-phase" decomposition is
similar in spirit as constructions by Hanlon, Hersh and Shareshian
\cite{HaHSAA} and by Hersh and Welker \cite{Hersh-Welker}.
It would be interesting to see whether there is a uniform framework
for this type of shelling-like decompositions. However, we have not
been able to find such a generalization.

This paper is organized as follows. The next two sections are of
preliminary nature. Namely, Section~\ref{sec:def} reviews basic
facts on bipartitional relations, while Section~\ref{sec:Morse}
outlines the basic ideas of the construction of Babson and Hersh.
Here we observe that the proofs of their main results are actually
applicable to a larger class of enumerations of maximal chains, which
we call ``enumerations growing by creating skipped intervals."
In Section~\ref{sec:bipart}, we provide the proof that
$\Bip(\{1,2,\dots,n\})$ is a lattice, and we show that it is graded
and compute its rank function in Section~\ref{sec:rank}. 
The purpose of Section~\ref{sec:pi-compatible} is to show that 
$\Bip(\{1,2,\dots,n\})$ may be written as union of $n!$ distributive
lattices, each indexed by a permutation, where the proof of 
distributivity is deferred to Section~\ref{sec:distr}. We begin
Section~\ref{sec:JT} by reviewing the Johnson--Trotter algorithm and
an easy generalization to enumerating all elements in a direct product
of symmetric groups. We continue by using these enumerations to
decompose the order complex of $\Bip(\{1,2,\dots,n\})$, and the order
complex of certain intervals in it, in a shelling-like manner. 
Section~\ref{s_toc} forms the core of our article. Here we construct
an enumeration of the maximal chains of $\Bip(\{1,2,\dots,n\})$ that
refines the ``J--T decomposition" introduced in Section~\ref{sec:JT}, and
to which the results of Babson and Hersh are adaptable, as reviewed in
Section~\ref{sec:Morse}. Finally, in Section~\ref{sec:int}, we outline
how the argument of the preceding section may be modified to handle the
case of proper intervals of $\Bip(\{1,2,\dots,n\})$ as well. 

\section{Definition and elementary properties of bipartitional
relations} 
\label{sec:def}

In Definition~\ref{D_bip} below, we 
formally introduce bipartitional relations.
This definition is (essentially) taken from 
Han \cite{GNH}. Subsequently, we
shall provide a different way to see bipartitional relations, namely
in terms of ordered bipartitions. Historically, bipartitional
relations were originally defined by Foata and Zeilberger in
\cite[Def.~1]{FoZeAE} in the latter way, and Han showed in
\cite[Th.~5]{GNH} the equivalence with a condition which, in its turn,
is equivalent to the transitivity condition
that we use for defining bipartitional relations as given below.

\begin{definition}
\label{D_bip}
A relation $U\subseteq X\times X$ on a finite set $X$ is a {\em
bipartitional relation}, if  both $U$ and $(X\times X)\setminus U$ are
transitive. We denote the set of bipartitional relations on X by $\Bip(X)$.
\end{definition}

Note that, by definition, the complement of a bipartitional relation
is also a bipartitional relation. 
Following \cite{GNH}, we say that $x,y\in X$ are {\em
  incomparable}, if either both $(x,y)$ and $(y,x)$ belong to $U$, or
none of them does. We will use the notation $x\inc{U}y$ for such pairs.

\begin{lemma}[Han] \label{lem:Han}
The incomparability relation $\inc{U}$ is an equivalence relation.
\end{lemma}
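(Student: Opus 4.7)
The plan is to verify the three axioms of an equivalence relation directly from the definition, using the transitivity of $U$ and of its complement $U^c := (X \times X) \setminus U$.

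Reflexivity is immediate: for any $x \in X$, either $(x,x) \in U$, in which case both $(x,x)$ and $(x,x)$ lie in $U$, or $(x,x) \notin U$, in which case neither does; either way $x \inc{U} x$. Symmetry is built into the definition of $\inc{U}$, since the condition ``both $(x,y)$ and $(y,x)$ belong to $U$, or neither does" is manifestly symmetric in $x$ and $y$.

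The only substantive content is transitivity: assuming $x \inc{U} y$ and $y \inc{U} z$, show $x \inc{U} z$. I would argue by a four-way case split on whether the pair $\{x,y\}$ (resp.\ $\{y,z\}$) is ``inside $U$'' (both ordered pairs in $U$) or ``outside $U$'' (both ordered pairs in $U^c$). If both pairs lie inside $U$, then transitivity of $U$ applied twice yields $(x,z) \in U$ and $(z,x) \in U$. If both lie outside $U$, transitivity of $U^c$ gives the analogous conclusion. The mixed cases are the point where the hypothesis on the complement really pays off: for example, if $(x,y),(y,x) \in U$ while $(y,z),(z,y) \in U^c$, suppose towards a contradiction that exactly one of $(x,z),(z,x)$ lies in $U$. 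If $(x,z) \in U$, then $(y,x),(x,z) \in U$ forces $(y,z) \in U$ by transitivity of $U$, a contradiction; if instead $(z,x) \in U$, then $(z,x),(x,y) \in U$ forces $(z,y) \in U$, again a contradiction. The remaining mixed case is symmetric.

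There is no real obstacle here; the lemma is a small bookkeeping exercise. The only thing to watch is that one must use both transitivity hypotheses (on $U$ and on $U^c$), as a single one is insufficient for the mixed cases, and the ``contradiction'' in those cases must be set up correctly by noting that $x \not\inc{U} z$ means exactly one of $(x,z),(z,x)$ is in $U$.
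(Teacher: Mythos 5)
Your proof is correct and is precisely the direct verification from Definition~\ref{D_bip} that the paper alludes to (the paper simply cites Han's Lemme~4 and notes it ``may be easily verified directly''). The case split on whether each of the pairs $\{x,y\}$, $\{y,z\}$ lies inside $U$ or inside $U^c$, with transitivity of $U$ handling the ``both inside'' case and the mixed cases by contradiction, and transitivity of $U^c$ handling the ``both outside'' case, is exactly the intended bookkeeping.
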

This is \cite[Lemme~4]{GNH}, which may be easily verified directly, using
Definition~\ref{D_bip}.

As it was first observed by Han in \cite{GNH}, every bipartitional
relation $U$ induces a {\em linear order $<_U$} on the
$U$-incomparability classes as follows. For $x\not\inc{U}y$ we set
$x <_U y$ if and only if $(x,y)\in U$ but $(y,x)\not\in U$. 
The $U$-incomparability classes form a set partition of $X$ and we
may order them by $<_U$ to obtain an {\em ordered partition} of $X$. 
An ordered partition $(B_1,B_2,\ldots,B_k)$ of $X$ is an ordered list of
pairwise disjoint nonempty subsets $B_i\subset X$, such that $X$ is the
union of the sets $B_i$. Every
bipartitional relation may be represented by a unique pair of an
ordered partition $(B_1,B_2,\ldots,B_k)$
of $X$ and a vector $(\eps_1,\eps_ 2,\ldots,\eps_k)\in \{0,1\}^k$
(cf. \cite[Th.~5]{GNH}), as follows. We set
\begin{equation}
\label{E_obip}
(x,y)\in U \iff \left\{\begin{array}{l}
\mbox{$x\in B_i$ and $y\in B_j$ for some $i<j$,}\\
\quad \quad\quad\quad\quad\quad\quad\mbox{or}\\
\mbox{$x,y\in B_i$ for some $i$ satisfying $\eps_i=1$.}
\end{array}\right.
\end{equation}
In fact, the $B_i$'s must be the $\inc{U}$-equivalence classes,
numbered in such a way that $i<j$ if and only if $x<_U y$ for every $x\in B_i$
and $y\in B_j$. We must set $\eps _i=1$ if and only if $(x,x)\in U$ for
all $x\in B_i$.

For example, the bipartitional relation 
$U=\{(1,2),(1,3),(2,2),(2,3),(3,2),(3,3)\}$
has two $U$-equivalence classes: $\{1\}$ and
$\{2,3\}$. Since $1<_U 2$ and $1<_U 3$, we must have $B_1=\{1\}$ and
$B_2=\{2,3\}$. Moreover, $(1,1)\not\in U$ implies $\eps_1=0$, whereas
$(2,2)\in U$ and $(3,3)\in U$ imply $\eps_2=1$.

Following \cite{FoZeAE}, we call the ordered partition
$(B_1,B_2,\ldots,B_k)$ together with the vector $(\eps_1,\eps_
2,\ldots,\eps_k)$ an {\em ordered bipartition}, and we write it
as $(B_1^{\eps_1},B_2^{\eps_2},\ldots,B_k^{\eps_k})$. We call the
blocks $B_i$ satisfying $\eps_i=1$ {\em underlined\/}
(and, consequently, we call the blocks $B_i$ satisfying $\eps_i=0$ 
{\em nonunderlined}). 
Furthermore, we call the ordered
bipartition $(B_1^{\eps_1},B_2^{\eps_2},\ldots,B_k^{\eps_k})$ defining
$U$ via (\ref{E_obip}) the {\em ordered bipartition representation of
$U$}.
On the other hand,
every relation $U$ defined by an ordered bipartition representation
$(B_1^{\eps_1},B_2^{\eps_2},\ldots,B_k^{\eps_k})$ in the way above is
bipartitional: the transitivity of $U$ is clear, and the transitivity
of $(X\times X)\setminus U$ is evident from the following trivial
observation.

\begin{lemma}
\label{L_uc}
If $U\subseteq X\times X$ is represented by the ordered bipartition
$(B_1^{\eps_1},B_2^{\eps_2},\ldots,B_k^{\eps_k})$ of $X$, then 
$U^c:=(X\times
  X)\setminus U$ is represented by the ordered bipartition
  $(B_k^{1-\eps_{k-1}},B_{k-1}^{1-\eps_{k-1}},\break \ldots,B_1^{1-\eps_1})$.
\end{lemma}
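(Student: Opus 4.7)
The plan is to verify the claimed ordered bipartition representation of $U^c$ by checking the three pieces of data that determine an ordered bipartition: the underlying set partition, the linear order on its blocks, and the $\eps$-vector. Each check is a direct consequence of the definitions.

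First I would observe that the incomparability relation is preserved under complementation: by its very definition, $x\inc{U}y$ means that the pairs $(x,y)$ and $(y,x)$ are either both in $U$ or both outside $U$, and this condition is symmetric in $U$ and $U^c$. Hence $\inc{U^c}$ has the same equivalence classes $B_1,B_2,\ldots,B_k$ as $\inc{U}$. Next, I would determine the linear order $<_{U^c}$ on these classes. For $x\in B_i$ and $y\in B_j$ with $i<j$, the representation \eqref{E_obip} gives $(x,y)\in U$ and $(y,x)\notin U$; complementing, $(y,x)\in U^c$ and $(x,y)\notin U^c$, so $y<_{U^c}x$. Thus the order on blocks induced by $U^c$ is exactly the reverse $B_k,B_{k-1},\ldots,B_1$.

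Finally, I would read off the $\eps$-vector of $U^c$. For any $x\in B_i$ we have $(x,x)\in U$ if and only if $\eps_i=1$ (again by \eqref{E_obip}), so $(x,x)\in U^c$ if and only if $\eps_i=0$, meaning the $\eps$-value attached to $B_i$ in the ordered bipartition representation of $U^c$ is $1-\eps_i$. Assembling these three observations and reading the blocks in the new order $B_k,\ldots,B_1$ yields precisely the claimed representation $(B_k^{1-\eps_k},B_{k-1}^{1-\eps_{k-1}},\ldots,B_1^{1-\eps_1})$.

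There is no real obstacle here: once one notices that complementation preserves incomparability classes and reverses the induced linear order, the rest is bookkeeping. The only thing worth stressing is that this observation implicitly confirms that $U^c$ is again bipartitional, since it now admits an ordered bipartition representation, and any such relation is bipartitional by the discussion preceding the lemma.
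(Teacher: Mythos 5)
Your argument is correct and fills in exactly the kind of routine verification the paper leaves implicit: Lemma~\ref{L_uc} is presented there as a ``trivial observation'' with no proof at all, because (as you note) it amounts to unwinding \eqref{E_obip} on both sides of the complement. Your decomposition of the check into (1) preservation of incomparability classes, (2) reversal of the induced order, and (3) complementation of the $\eps$-vector is a clean way to organize the verification and is the natural route; the one small gap is that in step (3) you only discuss pairs $(x,x)$, whereas \eqref{E_obip} requires the statement for all pairs $x,y$ in the same block $B_i$ -- but this follows by the identical reading of \eqref{E_obip}. You also correctly write $B_k^{1-\eps_k}$ where the paper's statement has the obvious typo $B_k^{1-\eps_{k-1}}$.

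One caution on the closing remark: you say the argument ``implicitly confirms that $U^c$ is again bipartitional.'' Be careful not to present this as circular. The cleanest logical order is: define $V$ to be the relation that \eqref{E_obip} associates to the ordered bipartition $(B_k^{1-\eps_k},\ldots,B_1^{1-\eps_1})$, verify $V=U^c$ by the case analysis you give, and only then conclude transitivity of $U^c$ (hence bipartitionality of $U$) from the general fact that any relation of the form \eqref{E_obip} is transitive. Your observations (1)--(3) are precisely that case analysis; just phrase the conclusion as establishing the set equality rather than as ``reading off'' a representation whose existence has not yet been justified.
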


We will use the notation\footnote{The letter $U$ has no specific
significance here, but we selected it in tribute to the ubiquitous letter
$U$ in Foata and Zeilberger's article \cite{FoZeAE}.}
$U(B_1^{\eps_1},B_2^{\eps_2},\ldots,B_k^{\eps_k})$ to denote 
the bipartitional relation defined by its ordered bipartition
representation $(B_1^{\eps_1},B_2^{\eps_2},\ldots,B_k^{\eps_k})$.
For example, the bipartitional relation 
$U=\{(1,2),(1,3),(2,2),(2,3),(3,2),(3,3)\}$
from above
may also be given as $U(\{1\}^0, \{2,3\}^1)$.

Frequently, we shall write this ordered bipartition in a suggestive
manner, where we physically underline the elements of underlined
blocks. For example, the above bipartitional relation will also be
written in the form $U(\{1\}, \{\underline 2,\underline 3\})$.

\section{Discrete Morse matching via chain enumeration}
\label{sec:Morse}

{\it Discrete Morse Theory}, developed by Forman~\cite{Forman-advmath,
  Forman-trans-ams, Forman-slc}, is a combinatorial theory that helps
to determine the homotopy type of a simplicial complex, considered as a 
$CW$-complex.  
Roughly speaking, in this theory a {\it Morse function} on the faces
of a simplicial complex induces a {\it Morse matching}, which in its turn
enables one to 
perform a sequence of elementary
collapses and find a smaller, homotopy equivalent $CW$-complex. Only the
unmatched faces of the simplicial complex ``survive'' the collapsing;
the subcomplexes induced by these faces are the {\it critical cells},
from which the homotopy type of the complex can (hopefully) be read off.
In our paper, we shall not need to know exact definitions of all these
ingredients. For our purpose it will suffice to keep
in mind that one of the primary goals is to identify the critical cells.
For a detailed
  description of the theory we refer the reader to the above cited
  sources.

In this paper we will use a method developed by Babson and
  Hersh~\cite{Babson-Hersh}, in the extended form of Hersh and Welker
\cite{Hersh-Welker} (which incorporates a correction to
\cite{Babson-Hersh} pointed out in \cite{Hersh-opt,SaVaAA}).
This method is
designed to find the homotopy type of the
  {\em order complex} $\triangle(P\setminus \{\widehat{0},\widehat{1}\})$ of a
  graded partially ordered set $P$ with minimum element $\widehat{0}$
  and maximum element $\widehat{1}$. Recall that the order complex of a
  partially ordered set $Q$ is the simplicial complex whose vertices are
  the elements of $Q$ and whose faces are the chains of $Q$. Babson and
  Hersh~\cite{Babson-Hersh} find a Morse matching on the Hasse
  diagram of the poset of faces of $\triangle(P\setminus
  \{\widehat{0},\widehat{1}\})$, the order relation being defined by
inclusion, by fixing 
  an enumeration of the maximal chains of $P$, which they call {\em poset
  lexicographic order}.
It was observed by Hersh and Welker~\cite[Theorem~3.1]{Hersh-Welker} 
that the key
property of a poset lexicographic order that is used in all 
proofs of Babson and Hersh in~\cite{Babson-Hersh} is that the
enumeration of maximal chains considered {\em grows by creating skipped
  intervals} (which is implicit in \cite[Remark~2.1]{Babson-Hersh}). 
They call this property the {\em crossing condition},
originally introduced by Hersh~\cite{Hersh-lexbal}.
The following definition is easily seen to be equivalent to this
crossing condition.
    
\begin{definition}
\label{D_ecsi}
Let $P$ be a graded poset of rank $n+1$ 
with a unique minimum element $\widehat 0$ 
and a unique maximum element $\widehat 1$.
An enumeration $c_1,\ldots,c_N$ of all maximal chains of $P$
{\em grows by creating
skipped intervals} if for every maximal chain $c_i$ there is a family of
intervals $I(c_i)$ with elements
$[a,b]=\{a,a+1,\ldots,b\}\subseteq \{1,2,\dots,n\}$, none of the intervals
contained in another, with the following property:
a chain $c$ contained in a maximal chain $c_i$ is also contained in 
a maximal chain $c_j$ for some $j<i$ 
if and only if the set of ranks of $c$ is
disjoint from at least one interval in $I(c_i)$.
\end{definition}

It is worth noting that the property stated in Definition~\ref{D_ecsi}
above also suffices to prove the linear inequalities shown
in~\cite{Billera-Hetyei-flag} and~\cite{Billera-Hetyei-planar}.

In the main result of Babson and Hersh \cite{Babson-Hersh}, a second
interval system, which is derived from the $I$-intervals, plays a crucial 
role. This interval system is called {\em $J$-intervals} $J(c_k)$.
The process of finding
the system of $J$-intervals is given in~\cite[p.~516]{Babson-Hersh} and
may be extended without any change to enumerations of maximal chains
that grow by creating skipped intervals as follows. 

\begin{definition}
\label{D_jint}
Consider an enumeration of all maximal chains of a graded poset of rank $n+1$
with $\widehat 0$ and $\widehat 1$
that grows by creating skipped intervals. Let $c_k$ be a maximal chain
whose associated interval system $I(c_k)$ satisfies 
$$\bigcup_{[u,v]\in
  I(c_k)}[u,v]=\{1,2,\dots,n\}.$$ 
We define the {\em associated $J$-intervals
  $J(c_k)$} as the output of the following process:

\begin{itemize}
\item[(0)]  Initialize by setting $I=I(c_k)$ and $J=\emptyset$.
\item[(1)] Let $[u,v]$ be the interval in $I$ whose left end point $u$
is the least. Add $[u,v]$ to $J$, and remove it from $I$.
\item[(2)] Replace each interval $[x,y]$ in $I$ by the intersection
$[x,y]\cap[v+1,n]$. Define the ``new" $I$ to be the 
resulting new family of intervals.
\item[(3)] Delete from $I$ those intervals which are not
  minimal with respect to inclusion.  
\item[(4)] Repeat steps {\em (1)--(3)} until $I=\emptyset$. 
The output of the algorithm is $J$.
\end{itemize}
\end{definition}

Our wording differs slightly from the one used by Babson and Hersh,
since they consider the families $I(c_k)$ and $J(c_k)$ as families of
subsets of $c_k$, whereas we consider them as families of subsets of
$\{1,\ldots, n\}$. 

The following theorem presents the main theorem of Babson and Hersh
\cite[Th.~2.2, Cor.~2.1]{Babson-Hersh},
in the generalized form implied by 
\cite[Theorem 3.1]{Hersh-Welker}
(including the aforementioned correction to \cite{Babson-Hersh}).

\begin{theorem}[Babson--Hersh]
\label{T_BH1}
Let $P$ be a graded poset of rank $n+1$ with $\widehat0$ and $\widehat1$,
and let $c_1,\ldots,c_N$ be an
enumeration of its maximal chains that grows by creating skipped
intervals. Then, in the Morse matching constructed by Babson and
Hersh in \cite[paragraphs above Th.~2.1]{Babson-Hersh}, 
each maximal chain $c_k$ contributes at most one critical
cell. The chain $c_k$ contributes a critical cell exactly when the union
of all intervals listed in $J(c_k)$ equals $\{1,2,\ldots,n\}$.
If a maximal chain $c_k$ contributes a critical cell, then the dimension
of this critical cell is one less than the number of intervals listed in
$J(c_k)$. 
\end{theorem}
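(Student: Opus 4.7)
The plan is to verify that the arguments of Babson and Hersh \cite{Babson-Hersh}, together with the correction of \cite{SaVaAA}, go through essentially word for word once we replace ``poset lexicographic order'' by ``enumeration growing by creating skipped intervals'' in the sense of Definition~\ref{D_ecsi}. The key observation is that, for a maximal chain $c_k$, the subchains ``new at stage $k$'' -- i.e.\ those subchains of $c_k$ not already contained in some earlier $c_j$ -- are, by Definition~\ref{D_ecsi}, exactly the subchains whose rank set meets every interval of $I(c_k)$. This characterisation of newness, and \emph{nothing more} about the enumeration of maximal chains, is what drives every estimate in \cite[Sec.~2]{Babson-Hersh}; consequently the passage from poset lexicographic orders to general enumerations growing by creating skipped intervals does not affect either the construction of the Morse matching or its analysis.

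Next I would unpack the matching within a fixed chain $c_k$, following \cite{Babson-Hersh} in the corrected form of \cite{SaVaAA}. Here the $J$-intervals of Definition~\ref{D_jint} play the central role: being pairwise disjoint and ordered left to right, they allow one to partner each new subchain $c \subseteq c_k$ by walking through $J(c_k)$ from left to right and, at each interval, either adding or deleting a single rank from $c$. Pairwise disjointness of the $J$-intervals ensures that these local choices do not interfere with each other, and a straightforward induction then shows that at most one subchain of $c_k$ can remain unmatched, establishing the first assertion. A direct inspection of the same procedure yields the remaining claims: the unmatched chain exists precisely when every rank in $\{1,2,\ldots,n\}$ is accounted for by some interval of $J(c_k)$, i.e.\ when $\bigcup J(c_k) = \{1,2,\ldots,n\}$; and in that case it consists of exactly $|J(c_k)|$ proper elements (one ``representative'' per $J$-interval), so as a face of the order complex it has dimension $|J(c_k)|-1$.

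The main obstacle is therefore not conceptual but a careful bookkeeping exercise: one must check that each step in the proofs of \cite[Th.~2.2, Cor.~2.1]{Babson-Hersh} and \cite[Th.~4.2]{SaVaAA} invokes the ordering of maximal chains only through the equivalence ``$c \subseteq c_k$ lies in some earlier $c_j$ if and only if the rank set of $c$ is disjoint from some interval in $I(c_k)$.'' Once this verification is carried out -- and the original authors essentially do it, since this equivalence is already how they use a poset lexicographic order in practice -- no additional arguments are required, and the theorem follows verbatim.
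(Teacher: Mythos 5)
Your proposal takes essentially the same approach as the paper. The paper does not give a detailed proof of Theorem~\ref{T_BH1}: it states it as the ``obvious generalization'' of \cite[Th.~2.2, Cor.~2.1]{Babson-Hersh} in the corrected form of \cite[Th.~4.2]{SaVaAA}, justified by the remark (in Section~\ref{sec:Morse}) that a careful inspection shows the Babson--Hersh proofs only use the property that a subchain of $c_k$ lies in some earlier $c_j$ exactly when its rank set is disjoint from some interval in $I(c_k)$; your proposal is precisely this observation, fleshed out with a sketch of the Morse matching and the role of the pairwise-disjoint $J$-intervals in identifying the at most one critical cell and its dimension $|J(c_k)|-1$.
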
 

We will use the above result in combination with
the main theorem of Discrete Morse Theory due to Forman
\cite[first (unnumbered) corollary]{Forman-advmath}, 
\cite[Th.~0.1]{Forman-trans-ams}, \cite[Th.~2.5]{Forman-slc}.

\begin{theorem} \label{thm:Forman}
Suppose $\triangle$ is a simplicial complex with a discrete Morse
function. Then $\triangle$ is homotopy equivalent to a CW complex with
exactly one cell of dimension $p$ for each critical cell of dimension
$p$. In particular, if there is no critical cell 
then $\triangle$ is contractible.  
\end{theorem}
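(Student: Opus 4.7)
\medskip

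\noindent\textbf{Plan of proof.}
My approach is to repackage the discrete Morse function $f$ as an acyclic partial matching on the face poset of $\triangle$ and then to remove the matched pairs one at a time via elementary collapses. First, I would recall Forman's definition: $f$ assigns real numbers to the cells of $\triangle$ so that, for each cell $\sigma$ of dimension $p$, at most one codimension-one face $\rho\subset\sigma$ satisfies $f(\rho)\ge f(\sigma)$ and at most one $(p+1)$-dimensional cell $\tau\supset\sigma$ satisfies $f(\tau)\le f(\sigma)$. A cell for which neither exception occurs is a critical cell; otherwise exactly one exception occurs and selects a canonical partner in the Hasse diagram of the face poset. Collecting all these partners yields a partial matching $M$ on the Hasse diagram whose unmatched cells are precisely the critical cells.

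Second, I would prove that the matching $M$ is \emph{acyclic}, in the sense that the directed graph obtained from the Hasse diagram by reversing each matched edge contains no directed cycle. The key observation is that along any alternating path of matched and unmatched edges the values of $f$ are strictly decreasing at unmatched steps and only weakly increasing at matched steps, so a closed alternating loop would force a strict net decrease, which is impossible.

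Third, I would use acyclicity to choose an order in which to perform the collapses. Running a topological sort on the reversed Hasse diagram produces, at each stage, a matched pair $(\sigma,\tau)$ such that $\sigma$ is a free face of $\tau$ in the current subcomplex; deleting both cells is therefore an elementary collapse and, in particular, a strong deformation retract. Iterating exhausts all matched cells and leaves a subcomplex whose cells are exactly the critical cells; building up the critical cells in the order inherited from the (inverse of the) collapsing process gives a CW complex in which each critical cell of dimension $p$ is glued in as a $p$-cell. The contractibility statement is the special case in which the only remaining cell is the minimum-value vertex, which is always critical. The main obstacle is establishing acyclicity together with the existence of a free pair at each stage; both hinge on a careful bookkeeping of $f$-values along alternating paths in $M$.
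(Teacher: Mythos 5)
The paper does not actually prove Theorem~\ref{thm:Forman}; it is stated as a black-box citation to Forman \cite{Forman-advmath,Forman-trans-ams,Forman-slc}, so there is no ``paper proof'' to compare against. Your sketch, however, has a genuine gap at its main step. You claim that, after topologically sorting the acyclic matching, you can remove matched pairs $(\sigma,\tau)$ one at a time as elementary collapses and be left with ``a subcomplex whose cells are exactly the critical cells.'' This is false for two reasons. First, a matched pair $(\sigma,\tau)$ is a free pair only if $\tau$ is the \emph{unique} cell of the current complex containing $\sigma$ as a codimension-one face; a topological sort of the modified Hasse diagram does not guarantee this, since $\sigma$ may also be a face of a critical cell (which is never removed) or of some other coface whose removal is itself blocked. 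Second, and more fundamentally, the set of critical cells is in general not a subcomplex of $\triangle$ at all --- for example a critical $1$-cell may have both of its endpoints matched away --- so a sequence of collapses inside $\triangle$ cannot terminate at such a ``subcomplex.'' Forman's actual argument proceeds by sweeping upward through the sublevel complexes $\triangle(c)=\bigcup_{f(\sigma)\le c}\overline{\sigma}$: crossing a non-critical value gives an honest collapse, while crossing a single critical cell $\sigma$ of dimension $p$ gives a homotopy equivalence $\triangle(c)\simeq\triangle(c-\varepsilon)\cup_{\phi}e^{p}$. The CW complex in the statement is built abstractly by these successive attachments and is not a subcomplex of $\triangle$. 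You would need to replace your collapsing argument by this level-set induction (or by the discrete gradient flow argument), and note that the ``no critical cell'' clause is to be read in the Babson--Hersh convention, where the empty face participates in the matching; under Forman's original convention a nonempty complex always has at least one critical cell.
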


\begin{remark}
We point out that Babson and Hersh modify Forman's conventions
by including the empty face in the range of the Morse
function, see the second paragraph after Definition~1.1 in 
\cite{Babson-Hersh}. As a consequence, 
a vertex might be matched to the empty face, something which is impossible in
the setup of Forman. The term ``critical cell" is thus
slightly more restrictive in \cite{Babson-Hersh} than in
\cite{Forman-advmath,Forman-trans-ams,Forman-slc} in that such a
vertex would be a critical cell according to Forman but not according
to Babson and Hersh.
\end{remark}

\section{The lattice of bipartitional relations}
\label{sec:bipart}

In this section, we formally define the order relation on the set of
bipartitional relations, and we prove that the so defined poset is a
lattice (see Theorem~\ref{thm:lattice}). At the end of this section,
we record an auxiliary result concerning the lattice structure of 
$\Bip(X)$ in Lemma~\ref{L_shortp}, 
which will be needed later in Section~\ref{sec:pi-compatible} in the
proof of Lemma~\ref{L_opl}.

Let $U$ and $V$ be two bipartitional relations in $\Bip(X)$. We define
$U\le V$ if and only if $U\subseteq V$ as subsets of $X\times X$.
In this manner, $\Bip(X)$ becomes a partially ordered set.

\begin{theorem} \label{thm:lattice}
For any finite set $X$, the poset $\Bip(X)$ is a lattice.
\end{theorem}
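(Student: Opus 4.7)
The plan is to prove that $\Bip(X)$ is a lattice by establishing the existence of meets. First I observe that $\Bip(X)$ is a finite poset with minimum $\emptyset$ (ordered bipartition $(X^0)$) and maximum $X\times X$ (ordered bipartition $(X^1)$), both of which are clearly bipartitional. Since the poset is finite and has a maximum, existence of meets implies the lattice property (joins then arise as meets of upper bounds).

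Given $U=U(B_1^{\eps_1},\dots,B_k^{\eps_k})$ and $V=U(C_1^{\delta_1},\dots,C_l^{\delta_l})$, my plan is to construct $U\wedge V$ as an ordered bipartition by maximizing the relation $W\subseteq U\cap V$ subject to $W$ being bipartitional. The key structural observation is: if $x,y$ lie in a common \emph{non-underlined} block of $U$, then $(x,y),(y,x)\notin U$, so the same must hold in $W$, forcing $x$ and $y$ into a single non-underlined block of $W$; the analogous statement holds for $V$. Accordingly, I define the partition $\tilde\pi_U$ of $X$ whose parts are the non-underlined $B_i$'s together with the singletons $\{x\}$ for $x$ in any underlined $B_i$, and the analogous $\tilde\pi_V$, and I take the partition provisionally underlying $W$ to be the join $\tilde\pi_U\vee\tilde\pi_V$ in the partition lattice (the finest common coarsening). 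A block $D$ of this partition may be underlined precisely when $D$ is contained in a single underlined $B_i$ and a single underlined $C_j$, which is the condition for every pair internal to $D$ to lie in $U\cap V$. Finally, I impose a linear order via the relation $D\prec D'$, defined to hold iff every $x\in D$, $y\in D'$ satisfies $(x,y)\in U\cap V$; if two blocks are $\prec$-incomparable they must be merged into a single non-underlined block, and iterating this step until $\prec$ is a total order yields the ordered bipartition representation of $W$.

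The main obstacle will be the verification that the $W$ so constructed really is the meet. Three things must be checked. First, $W$ is bipartitional with $W\subseteq U\cap V$, which is essentially built in once the merging terminates and the linear order is consistent. Second, every bipartitional $W'\subseteq U\cap V$ satisfies $W'\subseteq W$, which will follow from showing that the partition, linear order, and underlining of $W'$ each satisfy the corresponding maximality constraints that define $W$: the forced-merge argument gives that the partition underlying $W'$ is a coarsening of $\tilde\pi_U\vee\tilde\pi_V$, the requirement $W'\subseteq U\cap V$ forces its linear order to be compatible with $\prec$, and the condition for underlining in $W'$ matches the one imposed on $W$. Third, the iterative merging procedure must yield a well-defined output independent of the order in which pairs are merged. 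I expect (c) to be the most delicate step, since merging two blocks can create new $\prec$-incomparabilities further down the line; the cleanest way to handle this is to describe the final $W$-blocks intrinsically, for instance as the connected components of the graph on $(\tilde\pi_U\vee\tilde\pi_V)$-blocks whose edges are the $\prec$-incomparable pairs, iterated to a fixed point, so that no choices enter the final answer.
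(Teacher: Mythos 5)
Your route is genuinely different from the paper's: they prove that \emph{joins} always exist, by a clean relation-level characterization ($U\vee V$ is the transitive closure of $U\cup V$), then invoke the finite-lattice criterion; you attempt an explicit construction of \emph{meets} on ordered-bipartition representations. But your Step~4 has a real gap: the relation $\prec$ you define on blocks of $\tilde\pi_U\vee\tilde\pi_V$ is not antisymmetric, and you only prescribe a merge rule for $\prec$-\emph{incomparable} pairs, not for pairs with $D\prec D'$ \emph{and} $D'\prec D$. This latter case happens exactly when $D\cup D'$ sits inside a single underlined block of $U$ and a single underlined block of $V$, and since $\tilde\pi_U$, $\tilde\pi_V$ shatter underlined blocks into singletons, it arises already for $U=V=U(\{\underline 1,\underline 2\})$: there $\tilde\pi_U\vee\tilde\pi_V=\{\{1\},\{2\}\}$, both singletons are mutually $\prec$-comparable, no pair is $\prec$-incomparable, and your iteration halts with a non-total $\prec$ instead of producing the correct meet $U(\{\underline 1,\underline 2\})$. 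The fix is that mutually $\prec$-comparable blocks must be merged into a single \emph{underlined} block (which is legitimate, since the above observation shows the union is contained in one underlined $U$-block and one underlined $V$-block), and that the underlining decision of your Step~3 must therefore be revisited after each merge rather than fixed in advance. Even with this patch, the termination/confluence and maximality verifications you defer in items (b)--(c) are nontrivial, as you acknowledge.

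You would save most of this work by noticing that, by Lemma~\ref{L_uc}, complementation $U\mapsto U^c$ is an order-reversing involution of $\Bip(X)$; hence the existence of meets is equivalent to the existence of joins, and $U\wedge V=\bigl((U^c)\vee(V^c)\bigr)^c$ is just the complement of the transitive closure of $U^c\cup V^c$. The paper's Proposition~\ref{P_join} gives that joins exist (the transitive closure of $U\cup V$ is again bipartitional, via a short argument using transitivity of the complements of $U$ and $V$), and then Theorem~\ref{thm:lattice} follows from the standard fact that a finite poset with a $\widehat 1$ (or $\widehat 0$) and pairwise joins (or meets) is a lattice. Compared to that, your explicit description of the meet is interesting but substantially more laborious, and as written it is not yet a proof.
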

\begin{proof}
By \cite[Prop.~3.3.1]{Stanley-EC1},
it is sufficient to show that every pair of bipartitional
relations has a join. This will be done in Proposition~\ref{P_join} below.
\end{proof}

We remind the reader that a pair $(x,y)$ belongs to the {\em
transitive closure} of a relation 
$W\subseteq X\times X$ if there exists a chain
$x_0,x_1,\ldots x_n\in X$ with $n>0$ such that $x_0=x$, $x_n=y$ and
$(x_i,x_{i+1})\in W$ for $i=0,1,\ldots,n-1$.

\begin{proposition}
\label{P_join}
For every $U,V\in\Bip(X)$ there exists a
smallest bipartitional relation with respect to inclusion containing
both $U$ and $V$, that is, a join $U\vee V$. This join is given by the
transitive closure of $U\cup V$.
\end{proposition}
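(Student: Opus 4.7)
The plan is to verify three things: (i) $W := T(U \cup V)$, the transitive closure of $U \cup V$, contains $U$ and $V$; (ii) $W$ is a bipartitional relation; and (iii) any bipartitional relation containing both $U$ and $V$ must contain $W$. Properties (i) and (iii) are essentially formal: (i) is immediate since $U \cup V \subseteq W$, and (iii) follows because any bipartitional relation $Z$ with $U, V \subseteq Z$ is in particular transitive, hence contains the transitive closure of $U \cup V$. Thus the whole content lies in (ii), and within (ii) the only nontrivial point is that the complement $W^c = (X \times X)\setminus W$ is transitive, since transitivity of $W$ itself is automatic from its construction.

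To prove that $W^c$ is transitive, I will use the following reformulation of the bipartitional property, which is merely the contrapositive of transitivity of the complement: for any bipartitional relation $R$, if $(x,z) \in R$ then for every $y \in X$, at least one of $(x,y)$ or $(y,z)$ lies in $R$. The plan is to show the analogous property for $W$, and then deduce transitivity of $W^c$ by contrapositive. So let $(x,z) \in W$ and let $y \in X$ be arbitrary. By definition of the transitive closure there is a chain $x = x_0, x_1, \ldots, x_n = z$ with $(x_i, x_{i+1}) \in U \cup V$ for each $i$. Apply the reformulated bipartitional property to the single edge $(x, x_1) \in U \cup V$ (using that either $U$ or $V$ is a bipartitional relation, as appropriate): either $(x,y) \in U \cup V \subseteq W$, in which case we are done, or $(y, x_1) \in U \cup V \subseteq W$. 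In the latter case, the truncated chain $x_1, x_2, \ldots, x_n$ witnesses $(x_1, z) \in W$, so transitivity of $W$ yields $(y, z) \in W$. Either way, $(x,y) \in W$ or $(y,z) \in W$. Taking contrapositives: if neither $(x,y)$ nor $(y,z)$ lies in $W$, then $(x,z) \notin W$, which is exactly transitivity of $W^c$.

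The main obstacle is really just the complement step, and the key idea above is to peel off one edge of a witnessing chain so that the bipartitional property of $U$ or $V$ (which applies only to single relations, not to $W$ a priori) can be invoked, after which transitivity of $W$ handles the rest of the chain. Once $W^c$ is shown to be transitive, $W$ is a bipartitional relation containing $U$ and $V$, and minimality follows from the transitive-closure characterization, completing the proof that $W = U \vee V$.
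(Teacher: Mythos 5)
Your proof is correct and rests on the same core mechanism as the paper's---peel a single edge off a chain witnessing $(x,z)\in W$, apply transitivity of $U^c$ or $V^c$ to that one edge, and handle the remainder via transitivity of $W$---with only the cosmetic difference that you peel the first edge and argue in the direct (contrapositive) form, while the paper peels the last edge and argues by contradiction. The only small slip is that when $n=1$ the ``truncated chain'' $x_1,\dots,x_n$ degenerates to a single vertex and does not itself witness $(x_1,z)\in W$; but in that case $x_1=z$, so $(y,x_1)=(y,z)\in W$ is already the desired conclusion, and the argument goes through.
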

\begin{proof}
Let $W$ denote the transitive closure of $U\cup V$.
Every bipartitional relation containing both $U$ and $V$ contains also
$W$ by transitivity. We only need to show that $W$ is bipartitional.
It is clearly transitive, only the transitivity of $(X\times X)\setminus
W$ remains to be seen.

Assume by way of contradiction that $(x,y)$ and $(y,z)$ belong to the
complement of $W$ but $(x,z)\in W$ for some $x,y,z\in X$. By the
definition of $W$, there exists a sequence $x_0,x_1,\ldots,x_n\in X$
such that $n>0$, $x_0=x$, $x_n=z$, and for every $i\in\{0,1,\ldots,n-1\}$ we
have $(x_i,x_{i+1})\in U$ or $(x_i,x_{i+1})\in V$. Without loss of
generality we may assume that we have $(x_{n-1},z)\in U$.
We cannot have $n=1$ since this implies $(x,z)\in U$, in contradiction
with $(x,y)\not\in U\subseteq W$, $(y,z)\not\in U\subseteq W$, and the
transitivity of $U^c$
(where, as before, $U^c$ denotes the complement $(X\times X)\setminus U$). 
By induction on $i$, we see that $(x,x_i)$
belongs to $W$, for $i=1,2,\dots,n-1$. In particular, we have
$(x,x_{n-1})\in W$. The pair $(x_{n-1},y)$
cannot belong to $U$, otherwise we have $(x_{n-1},y)\in W$ and, by the
transitivity of $W$, also $(x,y)\in W$. On the other hand,
by the transitivity of the relation
$U^c$, we obtain from $(x_{n-1},y)\not\in U$ and $(y,z)\not\in U$
that $(x_{n-1},z)\not\in U$, in contradiction with our assumption.
\end{proof}

We may represent any relation $R\subseteq X\times X$ as a directed graph
on the vertex set $X$ by drawing an edge $x\rightarrow y$ exactly when
$(x,y)\in R$.
If we represent $U\cup V$ as a directed graph,
we obtain that $(x,y)\in U\vee V$ if and only if there is a directed
path $x=x_0\rightarrow x_1\rightarrow \ldots \rightarrow x_m=y$ such
that each edge belongs to the graph representing $U\cup V$.
By the transitivity of $U$ and $V$,
a shortest such path is necessarily {\em $UV$-alternating} in the sense
that every second edge belongs to $U$, the other edges belonging to
$V$. There is no bound on the minimum length of such a shortest path, as is
shown in the following example.

\begin{example}
Let $X=\{1,2,\ldots,n\}$ and consider the bipartitional relation
$$U=U(\{\underline{n},\underline{n-1}\},
\{\underline{n-2},\underline{n-3}\},\ldots),$$
where each block has
two elements, except possibly for the rightmost block, which is a singleton
if $n$ is odd. Consider also
$$V=U(\{\underline{n}\},\{\underline{n-1},
\underline{n-2}\},\{\underline{n-3},\underline{n-4}\},\ldots),$$
where each
block has two elements, except for the leftmost block, which is
always a singleton, and possibly for the rightmost block which is a
singleton if $n$ is even. It is easy to verify that
$$U\vee V=
(\{\underline{1},\underline{2},\ldots,\underline{n}\}).$$
The shortest
$UV$-alternating path from $1$ to $n$ is $1\rightarrow
2\rightarrow \cdots\rightarrow n$, since $(i,j)\not\in U\cup V$ if
$j-i\geq 2$.
\end{example}

On the other hand, if only $(x,y)$ belongs to $U\vee
V$ but $(y,x)$ does not, then the shortest $UV$-alternating path from
$x$ to $y$ has length $1$.
\begin{lemma}
\label{L_shortp}
Let $U$ and $V$ be bipartitional relations on $X$. If for some $x,y\in
X$ we have $(x,y)\in U\vee V$ and $(y,x)\not\in U\vee V$ then $(x,y)$
already belongs to $U\cup V$.
\end{lemma}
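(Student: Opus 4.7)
My plan is to prove the contrapositive: if $(x,y)\in U\vee V$ but $(x,y)\notin U\cup V$, then $(y,x)\in U\vee V$. Writing out the ordered bipartition representations of $U$ and $V$, let $i_U(z)$ and $i_V(z)$ denote the indices of the blocks of $U$ and $V$, respectively, that contain $z$. The hypothesis $(y,x)\notin U\vee V$ forces in particular $(y,x)\notin U$ and $(y,x)\notin V$; by~\eqref{E_obip} this amounts to $i_U(y)\ge i_U(x)$ with equality only if the common $U$-block is non-underlined, and the analogous statement for~$V$.

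From here I would split into three cases. If $i_U(y)>i_U(x)$, then~\eqref{E_obip} directly yields $(x,y)\in U$; if $i_V(y)>i_V(x)$, analogously $(x,y)\in V$, and we are done in both situations. The remaining ``degenerate'' case is that $x$ and $y$ lie in a common non-underlined $U$-block \emph{and} a common non-underlined $V$-block, and the plan is to rule it out by deriving $(y,x)\in U\vee V$, contradicting the hypothesis.

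In the degenerate case I would first record the following \emph{clone property}: whenever two elements lie in the same block of $U$ (regardless of whether that block is underlined), formula~\eqref{E_obip} shows that they have exactly the same $U$-neighbourhood, that is, $(x,c)\in U\iff(y,c)\in U$ and $(c,x)\in U\iff(c,y)\in U$ for every $c\in X$; the analogue for $V$ holds as well. The ``non-underlined'' part of the case hypothesis is used only to guarantee $(x,y)\notin U\cup V$, so that the directed path $x=x_0\to x_1\to\cdots\to x_m=y$ in $U\cup V$ supplied by Proposition~\ref{P_join} can be taken of minimum length with $m\ge 2$; minimality then forces the $x_i$ to be pairwise distinct, in particular $x_1,x_{m-1}\in X\setminus\{x,y\}$. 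Applying the clone property to the initial edge $(x,x_1)\in U\cup V$ gives $(y,x_1)\in U\cup V$, and applying it to the final edge $(x_{m-1},y)\in U\cup V$ gives $(x_{m-1},x)\in U\cup V$. Concatenating with the untouched interior edges produces the directed path $y\to x_1\to x_2\to\cdots\to x_{m-1}\to x$ in $U\cup V$, whence $(y,x)\in U\vee V$ by Proposition~\ref{P_join}, the contradiction we wanted.

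The hardest part will be to isolate the degenerate case as the only nontrivial one, and to spot the clone/substitution trick that lets one reverse a $U\cup V$-path from $x$ to $y$ while keeping its interior intact.
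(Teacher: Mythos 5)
Your argument is correct and follows the same route as the paper's proof: both reduce to the case where $x$ and $y$ lie in the same nonunderlined block of $U$ and of $V$, then exchange $x$ and $y$ at the two endpoints of a $U\cup V$-path to manufacture a path from $y$ to $x$, contradicting $(y,x)\not\in U\vee V$. The only cosmetic differences are that the paper works with a $UV$-alternating path (unnecessary here) and phrases your ``clone property'' as ``$x_1$ lies in a block to the right of the common block,'' and that your opening sentence mislabels the argument as a contrapositive when it is really a proof by contradiction with the same hypotheses as the lemma.
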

\begin{proof}
Assume, by way of contradiction, that the shortest $UV$-alternating path
$x=x_0\rightarrow x_1\rightarrow \ldots \rightarrow x_m=y$ from $x$ to
$y$ satisfies
$m>1$. Then, because of $m>1$, $(x,y)$ belongs to $U^c$ and $V^c$. Since
$(y,x)\not\in U\vee V$, the pair $(y,x)$ also belongs to $U^c$ and $V^c$.
Thus $x\inc{U} y$ and $x\inc{V} y$. We claim that we may replace
$x_0=x$ with $y$ and $x_n=y$ with $x$ in the $UV$-alternating path
$x=x_0\rightarrow x_1\rightarrow \ldots \rightarrow  x_m=y$ and obtain 
a $UV$-alternating path
$y\rightarrow x_1\rightarrow \ldots \rightarrow x$.
Indeed, $x\inc{U} y$ and $(x,y)\not\in U$ imply that $x$ and $y$
belong to the same nonunderlined block of $U$. Hence, if
$(x_0,x_1)\in U$, then $x_1$ belongs to a
block of $U$ to the ``right" of the block containing $x$, whence
$(y,x_1)\in U$. Similarly, if
$(x_0,x_1)\in V$,
then $x\inc{V} y$ and $(x,y)\not\in V$ yield $(y,x_1)\in V$. The proof
that $x_n$ may be replaced with $x$ is analogous.
We obtain that there is a $UV$-alternating path from $y$ to $x$,
implying $(y,x)\in U\vee V$, in contradiction to our
assumption. Therefore we must have $m=1$.
\end{proof}

\section{Cover relations and rank function}
\label{sec:rank}

In this section we describe the cover relations in the bipartition
lattice $\Bip(X)$. This description will allow us to show that $\Bip(X)$
is a graded poset, and to give an explicit formula for the rank function.

\begin{theorem}
\label{T_cover}
Let $U,V\in \Bip(X)$ be bipartitional relations. Then $V$ covers $U$ if
and only if its ordered bipartition representation may be obtained
from the ordered bipartition representation of $U$ in one of the three
following ways:
\begin{enumerate}
\item[(i)] join two adjacent underlined blocks of $U$,
\item[(ii)] separate a nonunderlined block of $U$ into two adjacent
nonunderlined blocks, or
\item[(iii)] change a nonunderlined singleton block of $U$ into an underlined
singleton block.
\end{enumerate}
Moreover, $\Bip(X)$ is a graded poset, with rank function
\begin{equation}
\label{E_rank}
\rank (U(B_1^{\eps_1},B_2^{\eps_2},\ldots,B_k^{\eps_k}))=
   3\cdot \sum _{i:\eps _i=1} \left| B_i\right|
   +\left|\left\{i \::\: \eps _i=0\right\}\right|
   -\left|\left\{i \::\: \eps _i=1\right\}\right|-1.
\end{equation}
\end{theorem}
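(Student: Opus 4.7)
The plan is to separate the theorem into two parts: the cover characterization (the harder part), and then the gradedness and rank formula (which is immediate once the covers are described). For the rank formula, I would verify by direct substitution into~\eqref{E_rank} that each of (i), (ii), (iii) changes the right-hand side by exactly $+1$: operation (i) leaves $\sum_{\eps_i=1}|B_i|$ and $|\{\eps_i=0\}|$ unchanged while decreasing $|\{\eps_i=1\}|$ by~$1$; operation (ii) increases $|\{\eps_i=0\}|$ by~$1$; and operation (iii) changes the contribution of one singleton block from $0+1-0$ to $3+0-1$, a net change of~$+1$. Since the minimum element $(X^0)$ gives $\rho=0$ and the maximum $(X^1)$ gives $\rho=3n-2$, gradedness and the claimed rank function follow.

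For the cover characterization I verify both directions. For sufficiency, given each operation $U\to V$, I would compute $V\setminus U$ and show that no bipartitional $W$ sits strictly between. In (i), $V\setminus U=\{(y,x):x\in B_i,\,y\in B_{i+1}\}$; a single such pair in $W$ makes $x\inc{W}y$, and since $B_i,B_{i+1}$ are already underlined in $U\subseteq W$, Lemma~\ref{lem:Han} forces $B_i\cup B_{i+1}$ into one underlined $W$-class, which combined with $W\subseteq V$ yields $W=V$. In (ii), $V\setminus U=\{(x,y):x\in B_i',\,y\in B_i''\}$; any such pair forces $B_i'$ and $B_i''$ to be exactly two adjacent nonunderlined $W$-classes. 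In (iii), $V\setminus U=\{(x_0,x_0)\}$ is a single pair, so no intermediate is possible. For necessity, given $U\subsetneq V$, I would exhibit a bipartitional $U'$ with $U\subsetneq U'\subseteq V$ arising from one of the three operations; when $V$ covers $U$, this $U'$ equals $V$. Fixing $(a,b)\in V\setminus U$, three configurations arise: (a) $a=b$ (then the $U$-block of $a$ is nonunderlined but the $V$-block of $a$ is underlined); (b) $a\neq b$ in the same $U$-block $B_i$ (necessarily with $\eps_i=0$); (c) $a\neq b$ in distinct $U$-blocks---then $(a,b)\notin U$ forces $b<_U a$, and $(b,a)\in U\subseteq V$ combined with $(a,b)\in V$ forces $a$ and $b$ into a common underlined $V$-block. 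Case~(b) gives a splitting of $B_i$ along a $V$-induced partition, yielding an operation of type~(ii); case~(c) gives two adjacent underlined $U$-blocks coalescing inside a single $V$-block, yielding an operation of type~(i); case~(a) reduces to~(iii) if $B_i$ is already a singleton, and otherwise reduces to~(b) or~(c) after choosing a second element of $B_i$.

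The main obstacle is the necessity step: the ordered bipartitions of $U$ and $V$ may interact in intricate ways, since a single $U$-block can be split across several $V$-blocks of either underlining type, and several $U$-blocks can simultaneously coalesce into one $V$-block. The central technical task is to confirm that in every such configuration at least one of the elementary modifications of type (i), (ii), or (iii) can be applied to $U$ and still produces a bipartitional relation contained in $V$; I would establish this by tracking the leftmost position at which the ordered bipartition representations of $U$ and $V$ diverge, and verifying that the local structure there always supports one of the three moves.
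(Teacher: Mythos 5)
Your treatment of the rank formula and of gradedness matches the paper's. Your sufficiency argument (showing each operation of type (i), (ii), (iii) actually produces a cover by exhibiting $V\setminus U$ and ruling out intermediate bipartitional $W$) is a genuinely different route: the paper does not verify sufficiency directly, but deduces it for free once necessity is established, by observing that $\rank$ increases by exactly $1$ under each operation, so a bipartitional relation strictly between would have to violate the rank function. Your direct verification is workable (and, once fleshed out, correct for (i) and (iii); type (ii) requires a somewhat delicate argument using the transitivity of $W$ and $W^c$), but it costs more than the paper's one-line rank observation.

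The real problem is in the necessity direction, in case~(c). You assert that when $(a,b)\in V\setminus U$ with $a,b$ in distinct $U$-blocks, the configuration ``gives two adjacent underlined $U$-blocks coalescing inside a single $V$-block, yielding an operation of type (i).'' This is not true. Take $U=U(\{1\},\{2\})$ and $V=U(\{\underline{1},\underline{2}\})$, $a=2$, $b=1$: here $(2,1)\in V\setminus U$, $a$ and $b$ lie in distinct $U$-blocks, and $(1,2),(2,1)\in V$ put them in a common underlined $V$-block; but the $U$-blocks $\{1\}$ and $\{2\}$ are \emph{nonunderlined}, so operation~(i) is not available, and indeed $V$ does not cover $U$ (it sits three levels above). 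More generally, the $U$-blocks of $a$ and $b$ need not be adjacent either, and the underlined $V$-block containing $a,b$ typically absorbs a whole run of intermediate $U$-blocks of mixed underlining type. You flag this yourself as ``the central technical task'' and propose to resolve it by tracking the leftmost divergence of the two ordered bipartitions, but you do not carry it out, and as stated the case~(c) step fails. The paper avoids this entirely by a different case split: it compares $\rest{V}{B_i}$ with $\rest{U}{B_i}$ on each $U$-block $B_i$. If some restriction grows (Case~1), $B_i$ must be nonunderlined and the change is localized inside $B_i$, giving types~(ii) or~(iii). If all restrictions agree (Case~2), it passes to complements via Lemma~\ref{L_uc}: $U^c$ covers $V^c$ and falls into Case~1, and translating back (excluding type~(iii) on the complement side) gives type~(i). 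That two-case structure, together with the complement duality, is exactly what keeps the merging case under control; your blind attempt is missing this idea.
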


\begin{example}
The cover relations in $\Bip(\{1,2\})$ are represented in
Figure~\ref{fig:bip2}. The cover relations in a subset of
$\Bip(\{1,2,3\})$ are represented in Figure~\ref{fig:bip3c}. (The fact 
that the cover relations in the latter subposet are also cover relations
in the entire poset $\Bip(\{1,2,3\})$ is shown in
  Proposition~\ref{T_fullrank}.)  
\end{example}

\begin{proof}[Proof of Theorem~\ref{T_cover}]
First we show that the ordered bipartition representation of $V$ must
come from the ordered bipartition representation of $U$ in one of the
three ways mentioned in the statement. For that purpose, assume that $V$
covers $U=U(B_1^{\eps_1},B_2^{\eps_2},\ldots,B_k^{\eps_k})$.
Let us compare the
restrictions of $V$ and $U$ to every block $B_i$.
Note that the restriction of a bipartitional relation on $X$
to a subset of $X$ is also bipartitional.

\medskip
{\sc Case 1.} {\em $\rest {V}{B_i}$ properly contains $\rest {U}{B_i}$
for some $i$.} In this case we must have $\eps_i=0$. The relation
$W$ given by  $$(x,y)\in W\quad \text{if and only if}
\quad  \left\{\begin{array}{l}
\mbox{$(x,y)\in U$,}\\
\quad \quad\quad\quad \mbox{or}\\
\mbox{$x,y\in B_i$ and $(x,y)\in V$,}
\end{array}\right.
$$
is a bipartitional relation, properly containing $U$, and contained in
$V$. In fact, its ordered bipartition representation may be obtained
from $(B_1^{\eps_1},B_2^{\eps_2},\ldots,B_k^{\eps_k})$
by replacing $B_i^{\eps_i}=B_i^0$ with the ordered bipartition
representation of $\rest{V}{B_i}$. Since $V$ covers $U$, we must have
$V=W$.

If $\rest{V}{B_i}$ contains no underlined block then merging two adjacent
blocks of $\rest{V}{B_i}$  yields a bipartitional relation $U'$ on $B_i$
satisfying $\rest{U}{B_i}\subseteq U'\subsetneqq \rest{V}{B_i}$. Since
$V$ covers $U$ and, hence, $\rest{V}{B_i}$ covers $\rest{U}{B_i}$, we must
have $U'=\rest{U}{B_i}$.
Therefore $V$ is obtained from $U$ by an operation
of type (ii).

If $\rest{V}{B_i}$ contains an underlined block, then by changing this
block to nonunderlined we may obtain a 
bipartional relation properly contained in
$V$ and still containing $U$. Hence $\rest{V}{B_i}$ must be $B_i\times
B_i$. The only case when there is no bipartition on $B_i$ strictly
between $\emptyset$ and  $B_i\times B_i$ is when $|B_i|=1$, and $V$ is
obtained from $U$ by an operation of type (iii).

\medskip
{\sc Case 2.} {\em $\rest {V}{B_i}=\rest {U}{B_i}$ for all $i$.}
In this case every $\inc{U}$-equivalence class is contained in some
$\inc{V}$-equivalence class, and this containment is proper for at least
one of the $B_i$'s, since otherwise we must have $V=U$. Hence the
situation of Case~1 applies to at least one of the blocks of $V^c$
and $U^c$. (Clearly, $U^c$ must cover $V^c$). Thus, by the already
proven case, the ordered bipartition representation of $U^c$ must be
obtained from the ordered bipartition representation of $V^c$ by an
operation of type (ii) or (iii). Here we may exclude an operation of
type (iii), since we are not allowed to have the $\inc{U}$-equivalence
classes (which are the same as the $\inc{U^c}$-equivalence classes) to
coincide with the  $\inc{V}$-equivalence classes. Therefore $U^c$ is
obtained from $V^c$ by an operation of type (ii), which by 
Lemma~\ref{L_uc} is equivalent to saying that $V$ is obtained 
from $U$ by an operation of type (i).
\medskip

It is easy to see that the function $\rank$ given in (\ref{E_rank})
assigns zero to the empty bipartitional relation 
$U(X^0)$, and increases by exactly one every time we
perform one of the operations (i), (ii), or (iii). By the already
established
part of the statement, $\rank$ increases by one on every cover relation,
and so $\Bip(X)$ is a graded poset with rank function $\rank$. On the
other hand, every operation of type (i), (ii), or (iii) on a
bipartitional relation $U$ must yield a
bipartitional relation $V$ covering $U$, since the rank function has
increased by exactly one.
\end{proof}

\begin{corollary} \label{cor:rank}
If $|X|=n$ then $\Bip(X)$ has rank $3n-2$.
\end{corollary}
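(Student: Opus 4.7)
The plan is to identify the unique maximum element of $\Bip(X)$, write down its ordered bipartition representation, and then substitute into the rank formula \eqref{E_rank} established in Theorem~\ref{T_cover}.

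First I would observe that, since the partial order on $\Bip(X)$ is inclusion of relations, the maximum element must be the unique bipartitional relation containing all others, namely $X\times X$ itself. The ordered bipartition representation of $X\times X$ consists of a single underlined block, namely $(X^{1})$: this follows immediately from \eqref{E_obip}, since $k=1$ and $\eps_1=1$ force $(x,y)\in U$ for all $x,y\in X$. Symmetrically, the minimum element is the empty relation $\emptyset$, whose ordered bipartition representation is $(X^{0})$, a single nonunderlined block.

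Applying \eqref{E_rank} to $U(X^{0})$ yields $\rank(\emptyset)=3\cdot 0+1-0-1=0$, confirming the bottom of the poset sits at rank $0$. Applying \eqref{E_rank} to $U(X^{1})$ yields
\begin{equation*}
\rank(X\times X)=3\cdot |X|+0-1-1=3n-2.
\end{equation*}
Since $\Bip(X)$ is graded by Theorem~\ref{T_cover}, its total rank equals $\rank(\widehat 1)-\rank(\widehat 0)=3n-2$, as claimed.

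There is no real obstacle here: the corollary is a direct computation once Theorem~\ref{T_cover} is available. The only thing to double-check is that the unique maximum and minimum are indeed represented by a single block, which is immediate from the definition of the ordered bipartition representation.
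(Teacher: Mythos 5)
Your proof is correct and matches the intended argument: the paper leaves Corollary~\ref{cor:rank} without an explicit proof precisely because it is the direct substitution of the maximum element $U(X^{1})$ into the rank formula~\eqref{E_rank} of Theorem~\ref{T_cover}, exactly as you have done. The arithmetic checks out ($3n+0-1-1=3n-2$), and your sanity check that the bottom element $U(X^{0})$ sits at rank $0$ is a nice confirmation.
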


\section{$\pi$-compatible bipartitions}
\label{sec:pi-compatible}

The purpose of this section is to introduce the notion of
compatibility of bipartitional relations with a given ordered
partition (the latter having been defined in the paragraph after
Lemma~\ref{lem:Han}). This notion will be of crucial importance for the
subsequent structural analysis of $\Bip(X)$ in the subsequent
sections.
As a first application, we use it in Proposition~\ref{P_cont} 
to give a criterion to decide $U\subseteq V$ when
$U$ and $V$ are bipartitional relations given by their ordered
bipartition representations.

\begin{definition} \label{def:comp}
We call an ordered partition $\pi=(C_1,\ldots,C_k)$ {\em compatible} with
the bipartitional relation $U$, if for every $x,y\in X$ we have
$$x\in C_i, y\in C_j, (x,y)\in U, (y,x)\not\in U
\quad \text{imply}\quad  i<j.$$
\end{definition}
Equivalently, if $U=U(B_1^{\eps_1},B_2^{\eps_2},\ldots,B_l^{\eps_l})$,
then every $B_i$ is the union of consecutively indexed $C_j$'s. A
particular case arises if $\pi$ consists of singleton blocks only. In
this case, given that $X=\{x_1,x_2,\dots,x_n\}$,
there is a permutation $\rho$ of the elements of $X$ such
that $\pi=(\{\rho(x_1)\},\{\rho(x_2)\},\dots,\{\rho(x_n)\})$. 
By abuse of
terminology, we shall often say in this case that ``the ordered
partition $\pi$ 
{\it is a permutation}," and the bipartitional relation
$U$ is compatible with such an ordered partition $\pi$ if and only if the
elements of $B_1,B_2,\ldots,B_l$ may be listed in such an order that
placing these lists one after the other in increasing order of blocks
gives the left-to-right reading of the permutation $\pi$. For any
ordered partition $\pi$, we denote the subposet of $\pi$-compatible
bipartitions in $\Bip(X)$ by $\Bip_\pi(X)$. The Hasse diagram of
$\Bip_{(\{1\},\{2\},\{3\})}(\{1,2,3\})$ is shown in
Figure~\ref{fig:bip3c}. 

\begin{figure}[h]
\begin{center}
\input{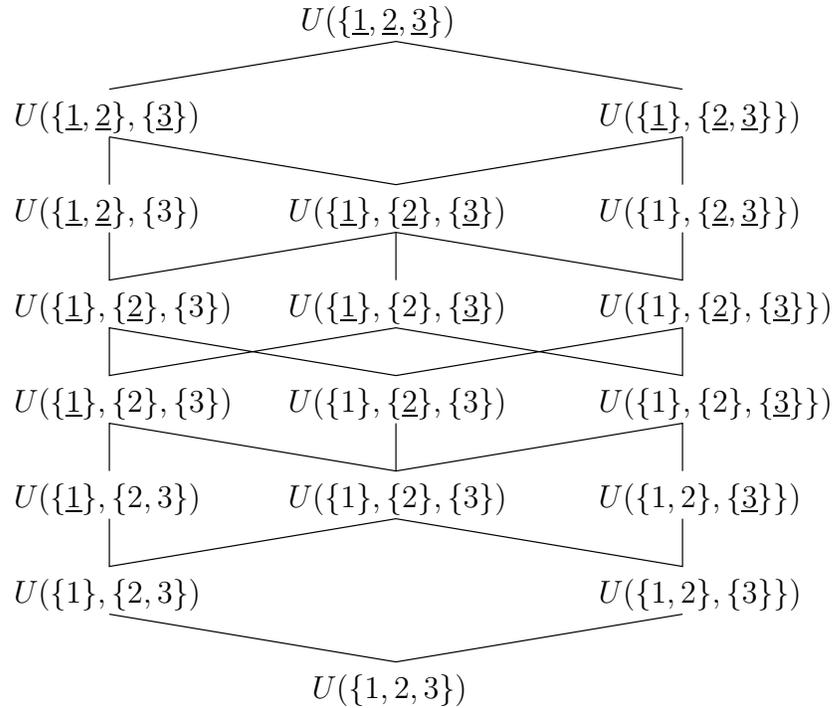}
\end{center}
\caption{$\Bip_{(\{1\},\{2\},\{3\})}(\{1,2,3\})$}
\label{fig:bip3c}
\end{figure}

The next lemma shows that this subposet is also a
sublattice.

\begin{lemma}
\label{L_opl}
Let $\pi$ be an ordered partition of $X$.
If $U\subseteq X\times X$ and $V\subseteq X\times X$ are
$\pi$-compatible bipartitional relations then so are
$U\wedge V$ and $U\vee V$.
\end{lemma}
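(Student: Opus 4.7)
The plan is to handle join and meet separately. Proposition~\ref{P_join} gives an explicit description of $U\vee V$ as the transitive closure of $U\cup V$, which together with Lemma~\ref{L_shortp} makes the join case essentially immediate. The meet case is the main obstacle, since $\Bip(X)$ does not come equipped with a comparably direct formula for $U\wedge V$; I would deal with it indirectly by a complementation duality.

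For the join, I would argue as follows. Suppose $x\in C_i$, $y\in C_j$, $(x,y)\in U\vee V$ and $(y,x)\notin U\vee V$. By Lemma~\ref{L_shortp} we have $(x,y)\in U\cup V$, so without loss of generality $(x,y)\in U$. Since $U\subseteq U\vee V$, also $(y,x)\notin U$. The $\pi$-compatibility of $U$ then forces $i<j$, which is exactly what Definition~\ref{def:comp} requires for $U\vee V$.

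For the meet, the plan is to exploit the order-reversing involution $U\mapsto U^c$ on $\Bip(X)$ (which is a bijection on $\Bip(X)$ by Definition~\ref{D_bip}, its action on ordered bipartitions being made explicit in Lemma~\ref{L_uc}). Since it reverses order, it interchanges joins and meets, giving $U\wedge V=(U^c\vee V^c)^c$. The preparatory claim is that if $U$ is compatible with $\pi=(C_1,\ldots,C_m)$, then $U^c$ is compatible with the reversed ordered partition $\pi^r:=(C_m,C_{m-1},\ldots,C_1)$. I would verify this directly from Definition~\ref{def:comp}: if $(x,y)\in U^c$ and $(y,x)\notin U^c$ with $x\in C_i$, $y\in C_j$, then $(y,x)\in U$ and $(x,y)\notin U$, so $\pi$-compatibility of $U$ yields $j<i$, i.e., $C_i$ precedes $C_j$ in $\pi^r$. (One could equivalently invoke the block characterization following Definition~\ref{def:comp} together with Lemma~\ref{L_uc}, since a union of consecutively indexed blocks of $\pi$ remains such a union in $\pi^r$.)

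With this symmetry in hand, the meet case is a one-line consequence: the already-proved join case applied to $U^c$, $V^c$, and $\pi^r$ shows that $U^c\vee V^c$ is $\pi^r$-compatible, and one more application of the preparatory claim (to the bipartitional relation $U^c\vee V^c$) shows that its complement $U\wedge V$ is $\pi$-compatible. The main conceptual step is really the complementation trick: once one recognizes that $\pi$-compatibility is self-dual under reversal of $\pi$ and complementation of $U$, the absence of a direct formula for $U\wedge V$ ceases to be an obstacle.
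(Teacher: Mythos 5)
Your proof is correct and follows the paper's argument essentially verbatim: the join case via Lemma~\ref{L_shortp} and $\pi$-compatibility of $U$, and the meet case via the duality $U\mapsto U^c$ together with reversal of the ordered partition. The only difference is that you spell out the duality step and the observation $(y,x)\notin U$ in slightly more detail than the paper does.
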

\begin{proof}
Let $\pi=(C_1,\ldots,C_k)$ and assume $(x,y)\in U\vee V$ but
$(y,x)\not\in U\vee V$ for some $x\in C_i$ and $y\in C_j$.
By Lemma~\ref{L_shortp}, we have $(x,y)\in U\cup
V$. Without loss of generality we may assume $(x,y)\in U$. Since $U$ is
$\pi$-compatible, we obtain $i<j$. Hence $U\vee V$ is also
$\pi$-compatible. The other half of the statement follows by duality,
since any bipartitional relation is $(C_1,\ldots,C_k)$-compatible if
and only if its complement is $(C_k,\ldots,C_1)$-compatible.
\end{proof}

Using Theorem~\ref{T_cover} we may deduce the following fact.

\begin{proposition}
\label{P_c}
Let $c: \emptyset=U_0\subset U_1\subset \cdots \subset U_{3n-2}=X\times X$ be a
maximal chain in $\Bip(X)$, where $n=|X|$. Then there is a unique
ordered partition $\pi_c$ which is compatible with all elements of the
chain. This ordered partition is a permutation.
\end{proposition}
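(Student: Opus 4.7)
The plan is to extract $\pi_c$ from the chain $c$ by introducing a binary relation $\prec$ on $X$: declare $x\prec y$ when there exists some $U_i$ in $c$ such that $x$ and $y$ lie in distinct blocks of the ordered bipartition representation of $U_i$, with the block of $x$ preceding that of $y$. I will show that $\prec$ is a strict total order on $X$, and that the ordered partition of singletons it determines is the unique $\pi_c$ compatible with every element of the chain.

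The first and technically most delicate step is a monotonicity lemma: if $x\ne y$ lie in distinct blocks of both $U_i$ and $U_j$, then their relative block-order is the same at both ranks. I would prove this by analysing the three cover types listed in Theorem~\ref{T_cover}. A type~(iii) cover does not alter the underlying set partition at all. A type~(ii) cover refines a single nonunderlined block into two adjacent nonunderlined ones, leaving every block aside from the refined one in an identical position, and even when the refined block equals the block of $x$ or of $y$, both resulting pieces inherit the same position relative to the other, untouched blocks. The remaining type~(i) cover joins two adjacent underlined blocks, and could in principle merge the blocks of $x$ and $y$; however, the resulting merged block is underlined, and no cover --- in particular no type~(ii) split --- can ever again break up an underlined block. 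Therefore, once $x,y$ lie in a common block, they remain together at every later rank, contradicting the hypothesis that they are in distinct blocks of $U_j$. Hence no merging event takes place between ranks $i$ and $j$, and the relative order is preserved.

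The second step is that every $x\in X$ appears as a singleton block $\{x\}$ at some $U_i$ in the chain. Indeed, $x$'s block is nonunderlined at $U_0=U(X^0)$ and underlined at $U_{3n-2}=U(X^1)$, while inspection of the three cover types shows that only a type~(iii) cover can change the nonunderlined/underlined status of $x$'s block, and only when that block is the singleton $\{x\}$. Combining this with the monotonicity lemma, $\prec$ is defined for every pair of distinct elements (choosing $U_i$ to be any rank at which $\{x\}$ is a singleton block ensures $x$ and $y$ lie in distinct blocks) and is antisymmetric. For transitivity, suppose $x\prec y$ and $y\prec z$, and consider a rank at which $\{y\}$ is a singleton. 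Monotonicity applied to $x\prec y$ forces $x$'s block at that rank to precede $\{y\}$, and monotonicity applied to $y\prec z$ forces $\{y\}$ to precede $z$'s block there; in particular $x$'s and $z$'s blocks are distinct and appear in the order $x$ first, so $x\prec z$.

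It remains to check compatibility and uniqueness. For compatibility of the singleton partition $\pi_c$ induced by $\prec$ with each $U_i$, observe that if some block $B$ of $U_i$ were not an interval in $\prec$, there would exist $y\notin B$ strictly between elements $x,z\in B$; applying the definition of $\prec$ to the pairs $(x,y)$ and $(y,z)$ at the common rank $U_i$ produces an immediate contradiction. For uniqueness, let $\pi=(C_1,\ldots,C_k)$ be any ordered partition compatible with the whole chain. Since each $\{x\}$ is a block of some $U_i$, compatibility forces $\{x\}$ to be a union of consecutive $C_j$'s; as the $C_j$'s are disjoint and nonempty, this means $\{x\}=C_{j(x)}$ for a unique index, so $k=n$ and $\pi$ is a permutation. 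Reading off the block-ordering of any $U_i$ at which $\{x\}$ is a singleton then forces the order of the $C_j$'s in $\pi$ to agree with $\prec$, so $\pi=\pi_c$. The main obstacle I expect is the monotonicity lemma, because one must carefully track how each of the three cover types affects the blocks of $x$ and of $y$ separately, including the subcases where a single cover operation acts on one of these two blocks.
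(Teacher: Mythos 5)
Your proof is correct, but it takes a genuinely different route from the paper's. The paper's proof pinpoints, for each pair $\{x,y\}$, the \emph{smallest} rank $i$ at which one of $(x,y),(y,x)$ first enters $U_i$, observes that the covering step $U_{i-1}\lessdot U_i$ must then be a type~(ii) split of the common nonunderlined block of $x$ and $y$ (so exactly one of $(x,y),(y,x)$ is added), and defines $x<_c y$ accordingly; transitivity is then a short contradiction argument that exploits transitivity of the relations $U_j$ directly, by arranging the ranks $i\leq j\leq k$ of a putative $3$-cycle and applying transitivity of $U_j$. Your proof instead defines the order by block position at \emph{any} rank, and the burden shifts to your monotonicity lemma (which rests on the key structural observation that type~(i) merges produce underlined blocks, and underlined blocks are never split again, so merges are irreversible) together with the auxiliary observation that every element $x$ passes through a singleton block $\{x\}$ at some rank (forced by the type~(iii)-only transition from nonunderlined to underlined). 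Your transitivity argument then cleverly pivots on a rank where the ``middle'' element $y$ is a singleton. Both arguments are sound; the paper's is shorter and more directly relational, while yours extracts more structural information about the chain (the irreversibility of merges and the singleton-block passage) as reusable lemmas, at the cost of a longer case analysis of the cover types.

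One small point worth tightening: in the compatibility step you only verify that each block $B$ of $U_i$ is an \emph{interval} in $\prec$, but compatibility also requires that the intervals corresponding to the blocks of $U_i$ appear in $\prec$ in the same left-to-right order as the blocks do in $U_i$. This is immediate from the definition of $\prec$ (if $B_a$ precedes $B_b$ in $U_i$, then by definition $x\prec y$ for all $x\in B_a$, $y\in B_b$), so the gap is cosmetic, but it should be said, since ``each block is an interval'' alone does not express compatibility.
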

\begin{proof}
For $n=1$ the statement is trivially true. Assume $n\geq 2$ and let $x$
and $y$ be two different elements of $X$. Consider the smallest $i$ for
which $U_i$ contains at least one of $(x,y)$ and $(y,x)$. Such an $i$
exists since $U_{3n-2}=X\times X$, and it is positive since $U_0=\emptyset$.
We claim that exactly one of $(x,y)$ and $(y,x)$ will belong to $U_i$.
In fact, $U_{i-1}$ does not contain any of them, so $x$ and $y$ belong
to the same nonunderlined $\inc{U_{i-1}}$-equivalence class. $U_i$ is obtained
from $U_{i-1}$ by one of the operations described in 
Theorem~\ref{T_cover}. 
Since at least one of $(x,y)$ and $(y,x)$ was added, this
operation can only be the separation of the $\inc{U_{i-1}}$-equivalence
class of $x$ and $y$ into two nonunderlined blocks. Such an operation
adds exactly one of $(x,y)$ and $(y,x)$. Let us set $x<_c y$ if
$(x,y)\in U_i$ and $(y,x)\not\in U_i$,
respectively $y<_c x$ if $(y,x)\in U_i$ and $(x,y)\not\in U_i$.

We want to construct an ordered partition $\pi_c$ which is compatible 
with all $U_i$'s. If $x<_c y$, this implies that $x$
belongs to an earlier block of 
$\pi_c$ than $y$. There is at most one such
ordered partition: the permutation, induced by the relation $<_c$,
provided that $<_c$ is a linear order.

We are left to show that $<_c$ is indeed a linear order.
Clearly, for distinct $x$ and $y$
exactly one of $x<_c y$ and $y<_c x$ holds. We only need to
show the transitivity of the relation $<_c$. Assume by way of
contradiction that $x<_c y$, $y<_c z$ and $z<_c x$ hold for some
$\{x,y,z\}\subseteq X$. Then we have
$$\begin{matrix}
(x,y)\in U_i, & (y,x)\notin U_i,\\
(y,z)\in U_j, & (z,y)\notin U_j,\\
(z,x)\in U_k, & (x,z)\notin U_k,
\end{matrix}
$$
for some $i,j,k$. By the cyclic symmetry of the list $(x,y,z)$ we may
assume that either $i\leq j\leq k$ or $k\le j\le i$.

If  $i\leq j\leq k$, then, since
$(x,y)\in U_i\subseteq U_j$ and $(y,z)\in U_j$,
the transitivity of the relation $U_j$ implies $(x,z)\in
U_j\subseteq U_k$, which is in contradiction with
$(x,z)\notin U_k$.

On the other hand, if $k\le j\le i$, then since
$(y,z)\in U_j$ and $(z,x)\in U_k\subseteq U_j$,
the transitivity of the relation $U_j$ implies $(y,x)\in
U_j\subseteq U_i$, which is in contradiction with
$(y,x)\notin U_i$.
\end{proof}

Proposition~\ref{P_c} allows us to characterize $U\subseteq V$ when
$U$ and $V$ are bipartitional relations given by their ordered
bipartition representation.

\begin{proposition}
\label{P_cont}
Let $U,V\in \Bip(X)$ be bipartitional relations represented as
$U=U(B_1^{\eps_1}, B_2^{\eps_2},\ldots, B_k^{\eps_k})$
and $V=U(C_1^{\eta_1}, C_2^{\eta_2},\ldots, C_l^{\eta_l})$. Then $U$ is
contained in $V$ if and only if the following three conditions are satisfied:

\begin{enumerate}
\item[(i)] there is an ordered partition
$\pi=(\{\pi_1\},\{\pi_2\},\ldots,\{\pi_n\})$
   that is also a permutation
which is compatible with both $U$ and $V$,
\item[(ii)] every underlined $B_i$ is contained in some underlined $C_j$,
\item[(iii)] every nonunderlined $C_i$ is contained in some
   nonunderlined $B_j$.
\end{enumerate}
\end{proposition}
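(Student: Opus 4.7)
The plan is to prove the two implications of the equivalence separately.

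For the forward direction, assume $U\subseteq V$. For (i), I would refine $\widehat{0}\subset U\subseteq V\subset\widehat{1}$ to a maximal chain of $\Bip(X)$ and invoke Proposition~\ref{P_c} to obtain a permutation $\pi_c$ compatible with every element of this chain, in particular with $U$ and $V$. For (ii), given $\eps_i=1$ and $x,y\in B_i$, the four pairs $(x,x),(x,y),(y,x),(y,y)$ all lie in $U\subseteq V$, so $x\inc{V}y$ and the common $V$-class $C_s$ containing $x$ and $y$ must be underlined (it contains diagonal elements); hence $B_i\subseteq C_s$. Statement (iii) follows by the dual argument applied to complements: if $\eta_i=0$ and $x,y\in C_i$, the analogous four pairs lie in $V^c\subseteq U^c$, placing $x$ and $y$ in a common nonunderlined $U$-block.

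For the backward direction, assume (i)--(iii) and fix $(x,y)\in U$; I would show $(x,y)\in V$. Write $x\in B_i$ and $y\in B_j$, so that (\ref{E_obip}) yields either $i=j$ with $\eps_i=1$ or $i<j$. In the first case, (ii) embeds $B_i$ in an underlined $C_s$, giving $(x,y)\in V$. In the second case, let $x\in C_s$ and $y\in C_t$. Because $\pi$ has singleton blocks, its compatibility with $U$ forces the $B$-blocks to form consecutive intervals of $\pi$ listed in order, so $x$ precedes $y$ in $\pi$; applying the same principle to $V$ yields $s\leq t$. If $s<t$, then $(x,y)\in V$ directly. If $s=t$, we must have $\eta_s=1$, since otherwise (iii) would embed $C_s$ in a single nonunderlined $B_r$, forcing $i=r=j$ and contradicting $i<j$; hence $(x,y)\in V$.

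The main obstacle is essentially absorbed into the invocation of Proposition~\ref{P_c}, the core structural fact that supplies the shared compatible permutation in (i). Once that tool is available, the rest is a careful but routine case analysis around underlined versus nonunderlined blocks; the most delicate step is the backward case $s\neq t$, where the permutation $\pi$ is the mechanism for transporting the ordering of the $B$-blocks to the ordering of the $C$-blocks.
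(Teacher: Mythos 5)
Your proposal is correct and follows essentially the same approach as the paper: both directions rest on Proposition~\ref{P_c} for the shared compatible permutation, and the remaining work is a case analysis on underlined versus nonunderlined blocks. The only stylistic difference is that the paper proves the backward implication by contrapositive (picking a witnessing pair $(x,y)\in U\setminus V$ and splitting on whether $(y,x)\in U$ and $(y,x)\in V$), while you argue directly from (i)--(iii) to $U\subseteq V$; the ingredients are the same, and your handling of the $s=t$ subcase via (iii) correctly replaces what the contrapositive gets for free.
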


\begin{proof}
Assume first that $U$ is contained in $V$. Then there is a maximal
chain $c$ in $\Bip(X)$ containing both $U$ and $V$. By
Proposition~\ref{P_c} there is an ordered partition $\pi_c$ compatible
with every
element of $c$, and this ordered partition is a permutation, so
condition (i) is satisfied. Consider an underlined
block $B_i$. For every $x,y\in B_i$ we have $(x,y)\in
U$ and so $(x,y)\in V$ since $U\subseteq V$. Hence $B_i$ is contained in
some $C_j$. The proof of condition (iii) is analogous.

We are left to show that whenever $U$ is not contained in $V$, at least
one of the given conditions is violated. Assume $U\not\subseteq V$ and
consider an ordered pair $(x,y)\in U\setminus V$. If $(y,x)\in U$ holds
as well then $x$ and $y$ are contained in the same underlined block in
the representation of $U$. Thus condition (ii) is violated since
$(x,y)\not\in V$. Similarly $(y,x)\not\in V$ implies a violation of
condition (iii). We are left with the case where $(x,y)\in U$,
$(y,x)\not\in U$, $(x,y)\not\in V$, and $(y,x)\in V$. Now condition (i)
is violated. Indeed, let $\pi=(\{\pi_1\},\{\pi_2\},\ldots,\{\pi_n\})$
be an arbitrary ordered partition that is also a permutation, satisfying
$x=\pi_i$ and $y=\pi_j$. By definition, if $\pi$ is compatible
with $U$ then we must have $i<j$ while compatibility with $V$
requires just the opposite, $j<i$.
\end{proof}

\section{The distributivity of the sublattice of $\pi$-compatible
bipartitions} 
\label{sec:distr}

In this section we introduce a representation of all $\pi$-compatible
bipartitions, where $\pi$ is an arbitrary fixed permutation. We will use
this representation to show that $\Bip_\pi(X)$ is a distributive
lattice, for all ordered partitions $\pi$. Without loss of generality, we
may assume $X=\{1,2,\dots,n\}$ and, for the moment, we may even assume that
$\pi=(\{1\},\{2\},\ldots,\{n\})$. The analogous results for an arbitrary
finite set $X$ and an arbitrary permutation $\pi$ may be obtained by
renaming the elements. 

\begin{definition}
Let $U$ be a
$(\{1\},\{2\},\ldots,\{n\})$-compatible
bipartitional relation, represented as $U=U(B_1^{\eps_1},
B_2^{\eps_2},\ldots, B_k^{\eps_k})$, such that the elements in each block
are listed in increasing order. We define the {\em code of $U$} as
the vector $(u_1,\ldots, u_n)$ where each $u_i$ is an element of the set
$\{\pm 1, \pm 3\}$, given by the following rule:
$$
u_i=
\begin{cases}
-1&\mbox{if $i$ is listed as the first element in a nonunderlined block;}\\
-3&\mbox{if $i$ is in a nonunderlined block, but not listed first;}\\
\hphantom{-}1&\mbox{if $i$ is listed as the last element in an
underlined block;}\\
\hphantom{-}3&\mbox{if $i$ is in an underlined block, but not listed last.}\\
\end{cases}
$$
\end{definition}
For example, the code of the bipartitional relation
$U(\{\underline{1},\underline{2}\},\{3\},\{4,5\},\{\underline{6}\})$
is $(3,1,\break -1,-1,-3,1)$. Evidently, the ordered bipartition representing
$U$ may be uniquely reconstructed from its code, we only need to
determine which vectors are valid codes of bipartitional relations.

The definition of the code of $U$ is inspired by formula (\ref{E_rank})
giving the rank of $U$. According to this formula, we may compute
$\rank(U)$ of a $(\{1\},\{2\},\ldots,\{n\})$-compatible 
bipartional relation $U$ as
follows. We take the ordered bipartition representation of $U$, where we
list the elements in increasing order. For the first element in each
nonunderlined block we increase $\rank(U)$ by $1$, and we associate 
no contribution to the other elements in nonunderlined blocks. For the last
element in each underlined block we increase $\rank(U)$ by $2$, and
for each other element of an underlined block we increase $\rank(U)$ by
$3$. Thus we could equivalently define a code where the ordered list of
weights $(-1,-3,1,3)$ is replaced by the list $(1,0,2,3)$. The rank of $U$ is
the sum of the coordinates in this ``simpler code.'' The list of weights
$(-1,-3,1,3)$ is obtained from $(1,0,2,3)$ by the linear transformation 
$x\mapsto 2x-3$. Thus, even for the code we have chosen, $\rank(U)$ is a
linear function of the sum of the coordinates in its code. Our choice of
code has two ``advantages'' over the ``more obvious'' code described
above:
\begin{itemize}
\item[--] The description of a valid code in Corollary~\ref{C_vcode} below
  involves very simple linear inequalities with integer bounds.
\item[--] For our code, the code of $U^c$ is obtained by simply taking the
  negative of the code of $U$. 
\end{itemize}
In the end, it is only a matter of taste whether one
prefers the list of weights\break $(-1,-3,1,3)$ or the list $(1,0,2,3)$, and
the results below may be easily transformed to fit the reader's
preference. 

\begin{lemma}
\label{L_vcode}
A vector $(u_1,\ldots, u_n)\in\{\pm 1,\pm3\}^n$ is the code of a
$(\{1\},\{2\},\ldots,\{n\})$-compatible
bipartitional relation if and
only if the following conditions are satisfied:
\begin{itemize}
\item[(i)] $u_1\neq -3$;
\item[(ii)] $u_n\neq 3$;
\item[(iii)] if $u_i=-3$ for some $i>1$ then $u_{i-1}<0$;
\item[(iv)] if $u_i=3$ for some $i<n$ then $u_{i+1}>0$.
\end{itemize}
\end{lemma}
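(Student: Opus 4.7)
The plan is to establish each direction of the equivalence separately. The starting point for both directions is the observation, recorded in the paragraph following Definition~\ref{def:comp}, that $\pi$-compatibility with $\pi=(\{1\},\{2\},\ldots,\{n\})$ forces each block of the ordered bipartition representation to be an interval of consecutive integers, with elements listed in natural increasing order.

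For necessity, assume $(u_1,\ldots,u_n)$ is the code of some $U=U(B_1^{\eps_1},\ldots,B_k^{\eps_k})$ with each $B_j$ an interval of consecutive integers. Condition~(i) is immediate: $1$ is the smallest element of $X$, so it is the first element listed in whatever block contains it, forcing $u_1\ne -3$. Condition~(ii) is symmetric for $n$. For condition~(iii), if $u_i=-3$ with $i>1$, then $i$ lies in some nonunderlined block $B_j$ but is not its smallest element; since $B_j$ is an interval of consecutive integers, $i-1\in B_j$ as well, so $u_{i-1}\in\{-1,-3\}$ and in particular $u_{i-1}<0$. Condition~(iv) follows by the symmetric argument applied to an underlined block.

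For sufficiency, I would reconstruct the bipartition directly from the vector. Given $(u_1,\ldots,u_n)$ satisfying (i)--(iv), declare $i\sim i+1$ when either $u_{i+1}=-3$ or $u_i=3$. Condition~(iv) prevents both of these from holding simultaneously, so the rule is unambiguous, and conditions~(iii) and~(iv) together ensure that $i\sim i+1$ implies $u_i$ and $u_{i+1}$ have the same sign. The equivalence relation generated by $\sim$ therefore partitions $\{1,\ldots,n\}$ into consecutive intervals within each of which all code entries share a common sign. In a negative interval $\{a,a+1,\ldots,b\}$, the definition of $\sim$ forces $u_{a+1}=\cdots=u_b=-3$; moreover $u_a=-1$, because $u_a=-3$ is ruled out by condition~(i) when $a=1$ and by condition~(iii) together with the maximality of the interval when $a>1$ (since (iii) would then give $a-1\sim a$). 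Positive intervals are treated symmetrically, using (ii) and (iv) to force $u_b=1$ and the remaining entries to be $3$. Declaring these intervals to be the blocks, underlined precisely when their entries are positive, yields a $\pi$-compatible bipartitional relation whose code is exactly $(u_1,\ldots,u_n)$.

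The proof is essentially a case-check, and I do not anticipate any serious technical obstacle. The most delicate point is handling the two boundary cases $a=1$ and $b=n$ in the sufficiency direction---this is precisely where conditions~(i) and~(ii) come into play---but the required case analysis is straightforward.
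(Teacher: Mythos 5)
Your proof is correct and takes essentially the same approach as the paper: the necessity direction is the same routine case-check (which the paper declares obvious and omits), and the sufficiency direction is the same reconstruction of the block structure directly from the code vector. The paper phrases the reconstruction as a left-to-right scanning algorithm (rules (a)--(e)), whereas you package it as the equivalence relation $\sim$ generated by $u_{i+1}=-3$ or $u_i=3$ and then verify that each class is a sign-constant interval with the correct boundary entries; the two formulations are interchangeable, and yours is arguably slightly cleaner since it avoids the imperative ``currently being written'' bookkeeping.
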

\begin{proof}
The necessity of the conditions above is obvious.

Conversely, given a vector $(u_1,\ldots, u_n)\in\{\pm 1,\pm 3\}^n$
satisfying the conditions above, we may find a unique ordered bipartition
$(B_1^{\eps_1},B_2^{\eps_2},\ldots, B_k^{\eps_k})$ representing a
relation whose code is $(u_1,\ldots, u_n)$, as
follows:
\begin{itemize}
\item[(a)] Start the first block with $1$ if $u_1=-1$ and with
   $\underline{1}$ if $u_1>0$. Continue reading the $u_i$'s, left to
   right.
\item[(b)] For $1<i<n$, if $u_i=-1$, start a new nonunderlined block with
   $i$. Note that rule (iv) prevents us from starting a nonunderlined
   block without ending a preceding underlined block.
\item[(c)] For $1<i\leq n$, if $u_i=-3$ then add a nonunderlined $i$ to the
   nonunderlined block that is currently being written (by condition (iii)).
\item[(d)] For $1<i\leq n$, if $u_i=1$ then end an underlined block with
   $\underline{i}$. This block is a singleton if $u_{i-1}<0$, and so
   $i-1$ belongs to a preceding nonunderlined block, or if $u_{i-1}=1$, and
   so $\underline{i-1}$ ends the preceding underlined block.
\item[(e)] For $1<i<n$, if $u_i=3$, then add an underlined
   $\underline{i}$ to the current underlined block if $u_{i-1}=3$, and
   start a new underlined block with $\underline{i}$ if $u_{i-1}<3$.
\end{itemize}
Clearly the above process yields the only $U$ whose code is
$(u_1,\ldots, u_n)$, and conditions (i) through (iv) guarantee that the
process never halts with an error.
\end{proof}

Lemma~\ref{L_vcode} may be rephrased in terms of inequalities as follows.
\begin{corollary}
\label{C_vcode}
A vector $(u_1,\ldots, u_n)\in\{\pm 1,\pm3\}^n$ is the code of a
$(\{1\},\{2\},\ldots,\{n\})$-compatible
bipartitional relation if and
only if it satisfies $u_1\geq -1$, $u_n\leq 1$ and $u_i-u_{i+1}\leq 2$
for $i=1,2,\ldots,n-1$.
\end{corollary}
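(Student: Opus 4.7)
The plan is to derive the inequalities of Corollary~\ref{C_vcode} directly from the conditions (i)--(iv) of Lemma~\ref{L_vcode} via a case analysis, exploiting that every entry $u_i$ lies in the small set $\{-3,-1,1,3\}$.

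First I would handle the boundary conditions. Since $u_1\in\{\pm1,\pm3\}$, the condition $u_1\neq -3$ is clearly equivalent to $u_1\geq -1$, which gives Lemma~\ref{L_vcode}(i)~$\Leftrightarrow$~first inequality. Symmetrically, $u_n\neq 3$ is equivalent to $u_n\leq 1$, which handles (ii). These steps are essentially a tautology given the restricted range of values.

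Next I would show that conditions (iii) and (iv) together are equivalent to $u_i-u_{i+1}\leq 2$ for all $i\in\{1,\ldots,n-1\}$. For the forward direction, fix $i$ and split into cases according to the value of $u_i$. If $u_i\leq 1$, then automatically $u_i-u_{i+1}\leq 1-(-3)=4$, but we must sharpen this: the only way $u_i-u_{i+1}>2$ could occur is if $u_{i+1}=-3$; and then (iii) forces $u_i<0$, hence $u_i\leq -1$, giving $u_i-u_{i+1}\leq 2$. If $u_i=3$, then (iv) forces $u_{i+1}>0$, so $u_{i+1}\geq 1$ and $u_i-u_{i+1}\leq 2$. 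For the reverse direction, suppose $u_i-u_{i+1}\leq 2$ for all $i$. If $u_i=3$, then $u_{i+1}\geq u_i-2=1>0$, yielding (iv); if $u_{i+1}=-3$, then $u_i\leq u_{i+1}+2=-1<0$, yielding (iii).

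This dispenses with the entire statement, since Lemma~\ref{L_vcode} already characterizes the valid codes via (i)--(iv). I expect no genuine obstacle: the only mild subtlety is remembering that entries only take the four values $\pm1,\pm3$, which is precisely what makes the inequality $u_i-u_{i+1}\leq 2$ encode exactly the two ``if-then'' clauses (iii) and (iv) rather than something weaker.
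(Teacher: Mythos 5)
Your proof is correct and takes the same approach the paper implicitly intends: the paper dismisses the corollary as a ``rephrasing'' of Lemma~\ref{L_vcode} with no further argument, and your case analysis simply spells out why conditions (i)--(iv) of that lemma are equivalent to the three inequalities, using that all entries lie in $\{-3,-1,1,3\}$.
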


\begin{theorem}
\label{T_repr}
Let $U$ and $V$ be
$(\{1\},\{2\},\ldots,\{n\})$-compatible
bipartitional
relations with codes $(u_1,\ldots, u_n)$ respectively $(v_1,\ldots, v_n)$.
Then $U\subseteq V$ if and only if $u_s\leq v_s$ holds
for $s=1,2,\ldots,n$.
\end{theorem}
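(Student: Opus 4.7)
The strategy is to reduce the inclusion $U\subseteq V$ to Proposition~\ref{P_cont} and then translate conditions (ii) and (iii) of that proposition into coordinatewise inequalities on the codes. Since both $U$ and $V$ are $(\{1\},\ldots,\{n\})$-compatible, condition (i) of Proposition~\ref{P_cont} is automatic. Moreover, compatibility with $(\{1\},\ldots,\{n\})$ forces every block of each bipartition to be an interval of consecutive integers, so the code entries $u_s,v_s$ read off the position of $s$ within the (underlined or nonunderlined) interval containing it. This intervality will be the key structural fact that makes the whole argument essentially a matter of reading off codes.

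For the forward direction, assume $U\subseteq V$ and fix $s\in\{1,\dots,n\}$. I would proceed by case analysis on the value of $u_s\in\{-3,-1,1,3\}$: if $u_s=-3$ the inequality is trivial; if $u_s=-1$, I would assume for contradiction that $v_s=-3$, note that the nonunderlined $V$-block containing $s$ is then an interval $[c,d]$ with $c<s$, observe via condition (iii) of Proposition~\ref{P_cont} that $[c,d]$ is contained in a nonunderlined $U$-block, and derive a contradiction with $s$ being the first element of its nonunderlined $U$-block; if $u_s\in\{1,3\}$, use condition (ii) to show $s$ lies in an underlined $V$-block, and use the position of $s$ in its $U$-block (combined with intervality) to conclude $v_s\ge u_s$.

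For the backward direction, assume $u_s\le v_s$ for all $s$, and verify conditions (ii) and (iii) of Proposition~\ref{P_cont}. For (ii), take an underlined $U$-block $B=[a,b]$: the hypothesis yields $v_s=3$ for $a\le s<b$ and $v_s\in\{1,3\}$ for $s=b$, so every element of $B$ lies in an underlined $V$-block; then I would use that $v_s=3$ means $s$ and $s+1$ belong to the same underlined $V$-block (by the block-is-interval property) to glue these memberships into a single underlined $V$-block containing $B$. For (iii), the dual argument works: for a nonunderlined $V$-block $C=[c,d]$, the hypothesis gives $u_s=-3$ for $c<s\le d$ and $u_s\in\{-1,-3\}$ for $s=c$, and the same gluing via consecutive $-3$'s places $C$ inside a single nonunderlined $U$-block.

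The only place that requires real care is the gluing step in the backward direction, where one must convert the pointwise information ``$s$ lies in an underlined (resp.\ nonunderlined) block'' into the global statement that all these $s$ actually lie in the \emph{same} block; this is where the intervality of blocks and the meaning of the middle code values $\pm 3$ (``not last in its underlined block'', respectively ``not first in its nonunderlined block'') do the essential work. Everything else is bookkeeping.
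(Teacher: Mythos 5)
Your proposal is correct and follows essentially the same route as the paper: both reduce the inclusion to Proposition~\ref{P_cont} (noting condition (i) is automatic for a common compatible permutation), then translate (ii) and (iii) into coordinatewise inequalities using the fact that blocks are intervals, and the forward direction is the same case analysis on $u_s\in\{-3,-1,1,3\}$. The only difference is stylistic: for the converse, the paper argues by contradiction, locating the first coordinate $t$ of an offending block with $u_t>v_t$, whereas you argue directly by propagating block membership through consecutive $\pm 3$ code values; both hinge on the same intervality and are equally valid.
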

\begin{proof}
Assume that $U=U(B_1^{\eps_1}, B_2^{\eps_2},\ldots,
B_k^{\eps_k})$ and $V=U(C_1^{\eta_1}, C_2^{\eta_2},\ldots, C_l^{\eta_l})$.
Since $U$ and $V$ are both
$(\{1\},\{2\},\ldots,\{n\})$-compatible, by Proposition~\ref{P_cont},
$U$ is contained in $V$ if and only if every underlined $B_i$ is
contained in some underlined $C_j$ and every nonunderlined $C_i$ is
contained in some nonunderlined $B_j$. It suffices to show that this is
equivalent to $u_s\leq v_s$ for all $s$.

Assume $U\subseteq V$ first, and consider the possible values of $u_s$,
for a fixed $s\in\{1,2,\ldots,\break n\}$. Let $B_j$ be the block of $U$
containing $s$.
If $u_s=-3$, then $u_s\leq v_s$ is automatically true. If $u_s=-1$ then
$v_s$ cannot be $-3$, otherwise the nonunderlined block $C_i$
containing $s$ has a smaller element in $C_i$, 
whereas the least
element of the nonunderlined block $B_j$ is $s$. Only $B_j$ could
contain $C_i$, but it does not. This contradiction shows that $v_s\geq
-1=u_s$. If $u_s=1$ then $s$ is an element in an underlined block $B_i$ of
$U$. This block $B_i$ must be contained in some underlined $C_j$.
In other words, $s$ belongs to an underlined block in $V$ 
showing $v_s\geq 1=u_s$. Finally, if $u_s=3$, then
$\{\underline{s}, \underline{s+1}\}$ is the subset of some underlined
$B_i$. This $B_i$ is contained in some underlined $C_j$, for which
we must have $\{\underline{s}, \underline{s+1}\}\subseteq C_j$. 
Thus, $s$ is not the last element in $C_j$, forcing $v_s\geq 3=u_s$.

For the converse, assume, by way of contradiction, that $u_s\le v_s$
for $1\le s\le n$,  but $U\not\subseteq V$. Then either condition (ii)
or (iii) of Proposition~\ref{P_cont} is violated.

\medskip
{\sc Case 1.} Some nonunderlined $C_i$ is not contained in any
nonunderlined $B_j$. If the least element $s$ of $C_i$
belongs to some underlined $B_j$ then, 
because of $v_s<0$ and $u_s>0$ we have
$u_s>v_s$, a contradiction. 
It remains the case where $s$ belongs to some
nonunderlined $B_j$. In this case let $t$ be the least element
of $C_i$ which does not belong to the same $B_j$ as $s$. Such a $t$
exists since the entire block $C_i$ is not contained in $B_j$.
Now we have $v_t=-3$ and $u_t\geq -1$, implying $v_t<u_t$, again
contradicting our assumption.

\medskip
{\sc Case 2.} Some underlined $B_i$ is not contained in any
underlined $C_j$. This case is the dual of the previous one, see also
Lemma~\ref{L_dual} below.

In both cases we obtain that $U\not\subseteq V$ implies $u_t>v_t$ for
some $t$, which is absurd.
\end{proof}

\begin{lemma}
\label{L_dual}
If $(u_1,\ldots,u_n)$ is the code of the bipartitional relation $U$ then
$(-u_n,\ldots,\break -u_1)$ is the code of $U^c.$
\end{lemma}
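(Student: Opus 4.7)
The plan is to unwind the definitions and perform a direct case analysis on the four possible values of $u_i \in \{-3,-1,1,3\}$. The main tool is Lemma~\ref{L_uc}, which tells us that $U^c$ has the same blocks (as sets) as $U$, but in reverse order and with each underlining toggled. Consequently, if $U$ is $(\{1\},\ldots,\{n\})$-compatible, then $U^c$ is $(\{n\},\ldots,\{1\})$-compatible; the ``code of $U^c$'' is then most naturally computed by listing the elements of each block in decreasing order (equivalently, by first applying the relabelling $i\mapsto n+1-i$ and then reading off the usual code).

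The crucial observation is that two independent involutions are at play in passing from $U$ to $U^c$: toggling the underlining of each block, and reversing the order in which the elements within each block are listed. Toggling the underlining swaps the ``nonunderlined'' weights $\{-1,-3\}$ with the ``underlined'' weights $\{1,3\}$, i.e.\ flips the sign of the code entry. Reversing the listing swaps ``first'' and ``last'' within each block, which exchanges $\pm 1$ and $\pm 3$ within each sign class. Running through the four cases for $u_i$ shows that these two effects combine precisely to map $u_i$ to $-u_i$. For example, $u_i=-1$ means $i$ is the smallest element of a nonunderlined block $B_j$ of $U$; in $U^c$ this block becomes underlined and, when listed in decreasing order, $i$ is now the \emph{last} element of $B_j$, which gives a code entry of $1=-u_i$. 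The remaining three cases $u_i \in \{-3,1,3\}$ are treated in exactly the same manner.

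Finally, since position $s$ of the code of $U^c$ corresponds to the $s$-th element of $(\{n\},\ldots,\{1\})$, namely $n+1-s$, the $s$-th entry of the code of $U^c$ equals $-u_{n+1-s}$, which yields the vector $(-u_n,\ldots,-u_1)$. I expect no substantive obstacle in this argument: once Lemma~\ref{L_uc} is invoked it is essentially a bookkeeping check. The only point needing a little care is to fix the convention under which the code of $U^c$ is defined, since $U^c$ is not in general $(\{1\},\ldots,\{n\})$-compatible; either of the two equivalent conventions mentioned above (reading blocks in decreasing order, or first applying the relabelling $i\mapsto n+1-i$) resolves this unambiguously.
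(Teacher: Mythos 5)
The paper leaves this proof to the reader, and your argument is correct: you identify the key conventional point --- that $U^c$ is $(\{n\},\ldots,\{1\})$-compatible rather than $(\{1\},\ldots,\{n\})$-compatible, so its ``code'' must be read off by listing blocks and their elements in decreasing order (equivalently, after relabelling $i\mapsto n+1-i$) --- and then the four-case check on $u_i$ together with the index reversal gives exactly $(-u_n,\ldots,-u_1)$. One small caveat worth flagging: the motivating picture of two commuting involutions, one ``flipping sign'' and the other ``exchanging $\pm1$ with $\pm3$,'' is not accurate pointwise for the individual operations. Toggling the underlining while keeping the increasing listing sends $-1$ to $3$ rather than to $1$ (the first element of a formerly nonunderlined block of size at least two is no longer last), and reversing the listing alone leaves the middle entries of a block unchanged rather than swapping $\pm1$ and $\pm3$. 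So the factorization should not be taken literally; what is true is that the composite operation sends $u_i\mapsto -u_i$, and your explicitly worked case $u_i=-1$ correctly exemplifies the computation that one repeats verbatim for $u_i\in\{-3,1,3\}$.
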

The proof is straightforward and is left to the reader.

Using Theorem~\ref{T_repr} and Corollary~\ref{C_vcode} we are able to
show that the sublattice of $\pi$-compatible
bipartitional relations is distributive for {\it any} ordered
partition $\pi$.

\begin{theorem}
\label{T_dist}
Let $\pi$ be any ordered partition on $X$. Then the lattice
$\Bip_\pi(X)$ is distributive.
\end{theorem}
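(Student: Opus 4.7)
My approach is to reduce the theorem to the case where $\pi$ is a permutation, and then use the code representation of Theorem~\ref{T_repr} together with the inequality description in Corollary~\ref{C_vcode} to realize $\Bip_\pi(X)$ as a sublattice of a product of chains.

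First, let $\pi=(C_1,\ldots,C_k)$ be an arbitrary ordered partition, and choose any linear ordering $\pi'=(\{x_1\},\ldots,\{x_n\})$ of $X$ that lists the elements of $C_1$ first (in any order), then those of $C_2$, and so on. I would check that $\Bip_\pi(X)\subseteq \Bip_{\pi'}(X)$: if $U$ is $\pi$-compatible then its blocks are unions of consecutive $C_j$'s, and such unions are themselves consecutive ranges of elements in the $\pi'$-ordering, so $U$ is also $\pi'$-compatible. Since both subposets are sublattices of $\Bip(X)$ by Lemma~\ref{L_opl}, $\Bip_\pi(X)$ is in fact a sublattice of $\Bip_{\pi'}(X)$. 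Because distributivity is an equational identity, it is inherited by sublattices, so it suffices to prove the theorem when $\pi$ is a permutation.

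By renaming the elements of $X$, I may further assume $\pi=(\{1\},\{2\},\ldots,\{n\})$. Theorem~\ref{T_repr} then gives an order isomorphism $U\mapsto (u_1,\ldots,u_n)$ between $\Bip_\pi(X)$ and the subset $S\subseteq\{\pm1,\pm3\}^n$ cut out by the inequalities of Corollary~\ref{C_vcode}, where $\{\pm1,\pm3\}^n$ carries the componentwise order. Since $\{\pm1,\pm3\}^n$ is a product of four-element chains, it is distributive. The main verification is that $S$ is closed under componentwise meet and join: the bounds $u_1\geq -1$ and $u_n\leq 1$ are obviously preserved under taking componentwise $\min$ or $\max$, and the constraints $u_i-u_{i+1}\leq 2$ are preserved by a short case split (for the meet, splitting on whether $u_{i+1}\leq v_{i+1}$ exhibits $\min(u_i,v_i)-\min(u_{i+1},v_{i+1})\leq 2$ in each case, and the argument for the join is analogous). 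Hence $S$ is a sublattice of a distributive lattice, so $S$ itself is distributive.

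Transporting the distributive identity back along the order isomorphism furnishes distributivity of $\Bip_\pi(X)$ (the isomorphism preserves meets and joins because $\Bip_\pi(X)$ is a lattice by Lemma~\ref{L_opl}, so its operations are determined by its order). The genuine combinatorial content has already been absorbed into Theorem~\ref{T_repr} and Corollary~\ref{C_vcode}; the only real obstacle is the case analysis showing that $S$ is closed under componentwise $\min$ and $\max$, together with the observation that passing to a permutation refinement induces a sublattice inclusion.
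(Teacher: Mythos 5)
Your proof is correct and follows essentially the same route as the paper: both reduce to the case of a permutation and then use the code representation of Theorem~\ref{T_repr} and Corollary~\ref{C_vcode} to exhibit $\Bip_\pi(X)$ as a subset of $\{\pm1,\pm3\}^n$ closed under componentwise $\min$ and $\max$, hence a sublattice of a product of chains. The only (minor, equally valid) difference is in the reduction step: the paper compresses the blocks $C_i$ to singletons to obtain the isomorphism $\Bip_\pi(\{1,\dots,n\})\cong \Bip_{(\{1\},\dots,\{k\})}(\{1,\dots,k\})$, whereas you instead refine $\pi$ to a permutation $\pi'$ and invoke the sublattice inclusion $\Bip_\pi(X)\subseteq\Bip_{\pi'}(X)$ together with the fact that distributivity is an equational property inherited by sublattices.
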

\begin{proof}
Without loss of generality, let $X=\{1,2,\dots,n\}$.
It suffices to consider the case where 
$\pi=(\{1\},\{2\},\dots,\{n\})$.
For, if $\pi=(C_1,C_2,\dots,C_k)$, then, from the remarks immediately
following Definition~\ref{def:comp}, it is easy to see by compressing
the blocks $C_i$ to singletons $\{i\}$ that 
\begin{equation} \label{eq:compress} 
\Bip_\pi(\{1,2,\dots,n\})\cong
\Bip_{(\{1\},\{2\},\dots,\{k\})}(\{1,2,\dots,k\}).
\end{equation} 

From now on, let $\pi=(\{1\},\{2\},\dots,\{n\})$.
By Theorem~\ref{T_repr} and
Corollary~\ref{C_vcode}, the partially ordered set $\Bip_\pi(X)$ is
isomorphic to the set of all vectors $(u_1,\ldots,u_n)\in\{\pm 1, \pm
3\}$ satisfying $u_1\geq -1$, $u_n\leq 1$ and $u_i-u_{i+1}\leq 2$ for
$i=1,2,\ldots,n-1$, partially ordered by the relation
$(u_1,\ldots,u_n)\leq (v_1,\ldots,v_n)$ if and only if $u_i\leq v_i$
holds for all $i$. We claim that the join and meet operations in this
representation are given by
$$
(u_1,\ldots,u_n)\vee
(v_1,\ldots,v_n)=(\max(u_1,v_1),\ldots,\max(u_n,v_n))
\quad
\mbox{and}$$
$$
(u_1,\ldots,u_n)\wedge
(v_1,\ldots,v_n)=(\min(u_1,v_1),\ldots,\min(u_n,v_n)).
$$
Clearly the above operations yield the join and meet of the two vectors
in the larger lattice of {\it all\/} vectors from $\{\pm 1,\pm 3\}^n$, partially
ordered by the Cartesian product of natural orders of integers. Thus we
only need to show that $(\max(u_1,v_1),\ldots,\max(u_n,v_n))$ and
$(\min(u_1,v_1),\ldots,\min(u_n,v_n))$ satisfy the inequalities required
by Corollary~\ref{C_vcode}, given that $(u_1,\ldots,u_n)$ and $(v_1,\ldots,v_n)$
satisfy these inequalities. The verification of this observation is
straightforward and is left to the reader. The theorem now follows from the
fact that the $\max$ and $\min$ operations are distributive over each
other.
\end{proof}

The next example shows that the entire lattice
$\Bip(X)$ is not distributive for $|X|\geq 2$, and that, in fact, 
it is not even modular.

\begin{example} \label{ex:modular}
Let $X=\{1,2,\ldots,n\}$ for some $n\geq 2$ and consider the
bipartitional relations $U_1=U(\{1\},\{2\},\ldots,\{n\})$, 
$U_2=U(\{\underline{1}\},\{\underline{2}\},\ldots,\{\underline{n}\})$,
and $V=U(\{n\},\{n-1\},\ldots,\{1\})$. It is easy to verify that
$U_1$ is contained in $U_2$, the join $U_1\vee V$ is $X\times X$, and
the meet $U_2\wedge V$ is $\emptyset$. The set
$\{U_1,U_2,V,\emptyset, X\times X\}$, shown in Figure~\ref{fig:nmod}, is
thus a sublattice, isomorphic to 
the smallest example of a nonmodular lattice.
\end{example}

\begin{figure}[h]
\begin{center}
\input{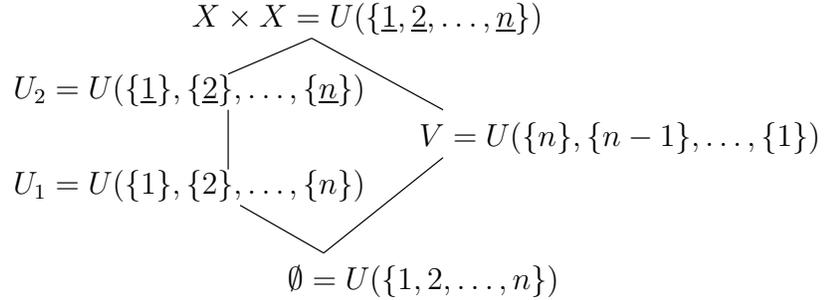}
\end{center}
\caption{Nonmodular sublattice contained in $\Bip(\{1,2,\ldots,n\})$}
\label{fig:nmod}
\end{figure}

By Birkhoff's theorem \cite[Th.~3.4.1]{Stanley-EC1}, 
every distributive lattice is isomorphic to
the lattice of order ideals in the poset of its join-irreducible
elements. In order to apply this result, we need to find the
join-irreducible elements in $\Bip_\pi(X)$.  
In preparation for the corresponding result (see
Theorem~\ref{P_jirred} below), we first characterize the cover relations in
$\Bip_\pi(X)$. Again, without loss of generality, we may assume that
$X=\{1,2,\ldots,n\}$ and $\pi=(\{1\},\{2\},\ldots,\{n\})$.

\begin{proposition}
\label{T_fullrank}
Let $X=\{1,2,\dots,n\}$, and let $\pi$ be the permutation
$\pi=(\{1\},\{2\},\break \ldots, \{n\})$.
Let $U\subsetneqq V$ be $\pi$-compatible bipartitional relations in
$\Bip(X)$. Then $V$ covers $U$ in $\Bip(X)$ if and only if\/
$V$ covers $U$ in $\Bip_\pi(X)$.
\end{proposition}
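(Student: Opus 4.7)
The ``only if'' direction is immediate: any bipartitional relation lying strictly between $U$ and $V$ in $\Bip(X)$ automatically lies strictly between them in $\Bip_\pi(X)\subseteq \Bip(X)$, so a cover in the larger lattice descends to one in the smaller.

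For the converse, I plan to work entirely in the code representation introduced at the start of Section~\ref{sec:distr}. Let $(u_1,\ldots,u_n)$ and $(v_1,\ldots,v_n)$ be the codes of $U$ and $V$; by Theorem~\ref{T_repr} one has $u_k\leq v_k$ for all $k$. The discussion preceding Lemma~\ref{L_vcode} shows that $\rank_{\Bip(X)}$ is a linear function of the code sum, yielding
$$\rank_{\Bip(X)}(V)-\rank_{\Bip(X)}(U)=\tfrac12\sum_{k=1}^{n}(v_k-u_k),$$
with each summand lying in $\{0,2,4,6\}$. Since $\Bip(X)$ is graded by Theorem~\ref{T_cover}, proving that $V$ covers $U$ in $\Bip(X)$ reduces to showing this rank difference equals~$1$, i.e., that $u$ and $v$ differ in exactly one coordinate and by exactly~$2$.

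The core step is to construct, whenever this claim fails, an intermediate valid code $u<w<v$; by Theorem~\ref{T_repr} this yields an element of $\Bip_\pi(X)$ strictly between $U$ and $V$, contradicting the covering hypothesis in $\Bip_\pi(X)$. The construction is: let $S=\{k:u_k<v_k\}$ and put $i=\max S$, then define $w$ by $w_k=u_k$ for $k\neq i$ and $w_i=u_i+2$. Verifying the inequalities of Corollary~\ref{C_vcode} for $w$ is routine; all but one reduce directly to validity of $u$. The delicate check is $w_i-w_{i+1}\leq 2$ when $i<n$: maximality of $i$ forces $u_{i+1}=v_{i+1}$, and then $v_i-v_{i+1}\leq 2$ together with $v_i\geq u_i+2$ gives $u_{i+1}\geq u_i$, which suffices; the case $i=n$ uses $u_n+2\leq v_n\leq 1$ instead. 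Finally, $w=v$ occurs precisely in the excluded case $|S|=1$ with $v_i-u_i=2$, so in every other case $w$ provides the required proper intermediate. The main obstacle is exactly this verification of the validity of $w$ as a code, where the choice $i=\max S$ is crucial, since it is precisely what guarantees $u_{i+1}=v_{i+1}$ and unlocks the key inequality.
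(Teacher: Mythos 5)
Your proof is correct. Both directions are handled properly: the forward direction is indeed immediate, and the converse is established by showing, in code language, that any non-cover in $\Bip(X)$ admits a $\pi$-compatible intermediate. The reduction via $\rank(V)-\rank(U)=\tfrac12\sum_k(v_k-u_k)$ is a valid consequence of the discussion preceding Lemma~\ref{L_vcode}, and the validity check for $w$ against Corollary~\ref{C_vcode} is complete.

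The framework is the same as the paper's (encode $U,V$ by their codes $u,v$, then manufacture an intermediate valid code between them), but your execution differs in a way worth noting. The paper takes $j$ to be the \emph{least} index with $u_j<v_j$ and then must distinguish three cases (with subcases) according to the value of $u_j$, because incrementing $u_j$ by $2$ can violate the constraint $u_j'-u_{j+1}\leq 2$ and the paper has no control over $u_{j+1}$; in Subcases~2a and 3a it is even forced to abandon position $j$ and act on position $j+1$ instead. You take $i$ to be the \emph{largest} differing index, which forces $u_{i+1}=v_{i+1}$, and this single observation lets the constraint $w_i-w_{i+1}\leq 2$ fall out of $v_i-v_{i+1}\leq 2$ and $u_i+2\leq v_i$. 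The result is a uniform one-line construction (increment $u_i$ by $2$) with no case split, which is a genuine simplification of the paper's argument. Both constructions in fact produce a cover of $U$ in $\Bip(X)$ lying below $V$, so they prove exactly the same statement; the gain in your version is purely in economy of argument.
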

\begin{proof}
Clearly, if $V$ covers $U$ in $\Bip(X)$ it also covers it in
$\Bip_\pi(X)$. We only need to show that whenever $V\supset U$ holds
in $\Bip_\pi(X)$, then there is a $\pi$-compatible
$U'$ covering $U$ in $\Bip(X)$ such that
$U\subset U'\subseteq V$ holds in $\Bip_\pi(X)$.
We prove this statement by considering the codes
$(u_1,\ldots,u_n)$ of $U$ and $(v_1,\ldots,v_n)$ of $V$.
Assume that $u_i\leq v_i$ holds for
$i=1,2,\ldots,n$ and that $j$ is the least index such that $u_j<v_j$.

\medskip
{\sc Case 1.} $u_j=-3$. In this case $j>1$ and $u_{j-1}$
is negative. The element $j$ is in a nonunderlined block of $U$, 
and it is not the first element of this block. Let $U'$ be the
$(\{1\},\{2\},\ldots,\{n\})$-compatible
bipartitional relation obtained
from $U$ by splitting the block containing $j$ into two adjacent blocks,
such that the second block begins with $j$. Then the code $(u_1',\ldots,
u_n')$ of $U'$ is obtained from the code $(u_1,\ldots, u_n)$ by
increasing $u_j$ to $u_j'=-1$ and leaving all other coordinates
unchanged. Since $u_j<v_j$, we have $-1\leq v_j$, and thus $u_i'\leq v_i$
holds for $i=1,2,\dots,n$.

\medskip
{\sc Case 2.} $u_j=-1$. In this case $u_{j-1}$ (if it exists)
is not $3$ and $U$ has a nonunderlined block starting at $j$.
Since $-1=u_j<v_j$, we also have
$1\leq v_j$, and so $v_j$ is positive. 

\smallskip
{\it Subcase 2a.} $u_{j+1}=-3$, i.e., $j+1$ belongs to the
nonunderlined block of $U$ that started at $j$. Thus, by condition (iii) in
Lemma~\ref{L_vcode}, $v_{j+1}$ cannot be $-3$ and so $-1\leq v_{j+1}$.
Let $U'$ be the
$(\{1\},\{2\},\ldots,\{n\})$-compatible
bipartitional
relation obtained from $U$ by splitting the block containing $j+1$ into
two adjacent blocks, such that the second block begins with $j+1$. Just
like in Case~1, the code  $(u_1',\ldots,
u_n')$ of $U'$ is obtained from the code $(u_1,\ldots, u_n)$ by
increasing $u_{j+1}=-3$ to $u_{j+1}'=-1$, and so $u_i'\leq v_i$ holds for
all $i$.

\smallskip
{\it Subcase 2b.} $u_{j+1}\neq -3$, i.e., the nonunderlined
block containing $j$ is a singleton block. Let $U'$ be the
$(\{1\},\{2\},\ldots,\{n\})$-compatible
bipartitional relation obtained
from $U$ by changing the nonunderlined block $\{j\}$ into an underlined
block $\{\underline{j}\}$. The code  $(u_1',\ldots,
u_n')$ of $U'$ is obtained from the code $(u_1,\ldots, u_n)$ by
increasing $u_{j}=-1$ to $u_{j}'=1$, leaving all other coordinates
unchanged. Since $v_j\ge1$, we have $u_i'\le v_i$ for all $i$.

\medskip
{\sc Case 3.} $u_j=1$ (thus $v_j=3$). There is an underlined
block in $U$ ending with $j$, whereas the underlined block
containing $j$ in $V$ does not end with $j$.

\smallskip
{\it Subcase 3a.} $u_{j+1}<0$. By condition (iii) of
Lemma~\ref{L_vcode}, we must have $u_{j+1}=-1$. Moreover, $v_j=3$ and
condition (iv) of Lemma~\ref{L_vcode} imply $v_{j+1}\geq 1$.
Therefore there is a nonunderlined block in $U$ starting at $j+1$ and,
thus,
$u_{j+1}<v_{j+1}$ is satisfied. We may now repeat the reasoning of Case~2
for $j+1$.

\smallskip
{\it Subcase 3b.} $u_{j+1}>0$, i.e., there is an adjacent
underlined block in $U$ starting with $j+1$. Let $U'$ be the
$(\{1\},\{2\},\ldots,\{n\})$-compatible
bipartitional relation obtained
from $U$ by merging the underlined blocks containing $j$ and
$j+1$. The code $(u_1',\ldots,
u_n')$ of $U'$ is obtained from the code $(u_1,\ldots, u_n)$ by
increasing $u_{j}=1$ to $u_{j}'=3$, leaving all other coordinates
unchanged. Since $v_j=3$, we have $u_i'\le v_i$ for all $i$.
\end{proof}

\begin{corollary} \label{cor:samerank}
For any permutation $\pi$ of $X$, every interval $[U,V]$ in $\Bip_\pi(X)$ has
the same rank as the corresponding interval $[U,V]$ in $\Bip(X)$.
\end{corollary}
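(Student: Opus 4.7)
My plan is to derive the corollary as a direct consequence of Proposition~\ref{T_fullrank} together with the gradedness of both posets involved. First, by renaming elements of $X$ we may assume $X=\{1,2,\dots,n\}$ and $\pi=(\{1\},\{2\},\dots,\{n\})$, so that Proposition~\ref{T_fullrank} applies verbatim. Next, observe that $\Bip(X)$ is graded by Theorem~\ref{T_cover}, while $\Bip_\pi(X)$ is graded because it is distributive by Theorem~\ref{T_dist}; hence the rank of any interval in either poset equals the length of any saturated chain in that interval.

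Now let $U<V$ in $\Bip_\pi(X)$ and pick any saturated chain
\[
U=U_0\lessdot U_1\lessdot \cdots \lessdot U_m=V
\]
in the interval $[U,V]$ of $\Bip_\pi(X)$, so that $m$ is the rank of $[U,V]$ in $\Bip_\pi(X)$. Each $U_i$ is $\pi$-compatible, and each step $U_{i-1}\lessdot U_i$ is a cover in $\Bip_\pi(X)$, so by Proposition~\ref{T_fullrank} each step is also a cover in the larger poset $\Bip(X)$. Consequently the same chain is a saturated chain in $[U,V]\subseteq\Bip(X)$, and therefore its length $m$ equals the rank of $[U,V]$ in $\Bip(X)$ as well. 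This matches the two ranks.

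There is no real obstacle here; the only subtlety is verifying that the hypotheses of Proposition~\ref{T_fullrank} are met at each cover $U_{i-1}\lessdot U_i$, which is automatic since every element on a chain in $\Bip_\pi(X)$ is by definition $\pi$-compatible. Thus the argument is a one-paragraph deduction once the two gradedness facts and Proposition~\ref{T_fullrank} are in hand.
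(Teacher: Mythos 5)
Your argument is correct and is exactly the deduction the paper has in mind: the corollary appears immediately after Proposition~\ref{T_fullrank} with no separate proof, because (after reducing by renaming to $\pi=(\{1\},\dots,\{n\})$) the proposition says the cover relations of $\Bip_\pi(X)$ and $\Bip(X)$ agree on $\pi$-compatible pairs, so any saturated chain in $[U,V]_\pi$ is already saturated in $[U,V]\subseteq\Bip(X)$, and gradedness of both posets forces the two ranks to coincide.
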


Now we are in the position to describe the join-irreducible elements in
$\Bip_\pi(X)$. 

\begin{theorem}
\label{P_jirred}
Let $X=\{1,2,\dots,n\}$ and $\pi=(\{1\},\{2\},\ldots,\{n\})$.
Then $\Bip_\pi(X)$ has the following $3n-2$ join-irreducible elements:
\begin{itemize}
\item[(i)] $E(i):=U(\{1,\ldots,i-1\},\{i,\ldots,n\})$ for
   $i\in\{2,\ldots,n\}$.
\item[(ii)] $F(i):=U(\{1,\ldots,i-1\},\{\underline{i}\},\{i+1,\ldots,n\})$ for
   $i\in\{1,2,\dots,n\}$. Here the first block is omitted for $i=1$ and
   the last block is omitted for $i=n$.
\item[(iii)]
   $G(i):=U(\{1,\ldots,i-1\},\{\underline{i},\underline{i+1}\},\{i+2,\ldots,n\})$
   for $i\in\{1,2,\dots,n-1\}$. Here the first block is omitted for $i=1$
   and the last block is omitted for $i=n-1$.
\end{itemize}
Moreover, the bipartitional relations listed under {\em(i)} and
{\em(ii)} are also
join-irreducible elements in $\Bip(X)$.
\end{theorem}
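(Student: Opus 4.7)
The plan is to exploit the fact that $\Bip_\pi(X)$ is a distributive lattice (Theorem~\ref{T_dist}) of rank $3n-2$ (by Corollary~\ref{cor:samerank} combined with Corollary~\ref{cor:rank}). By Birkhoff's theorem, together with the standard identity that in a finite distributive lattice $J(P)$ both the length of any maximal chain and the number of join-irreducibles equal $|P|$, the lattice $\Bip_\pi(X)$ has exactly $3n-2$ join-irreducible elements. Since the three families in the statement contain $(n-1)+n+(n-1)=3n-2$ pairwise distinct $\pi$-compatible bipartitional relations (their ordered bipartition representations are visibly different), it suffices to prove that each of $E(i)$, $F(i)$ and $G(i)$ has a unique lower cover in $\Bip_\pi(X)$.

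To verify uniqueness of the lower cover, I would combine Proposition~\ref{T_fullrank} (which says that cover relations in $\Bip_\pi(X)$ are precisely the cover relations of $\Bip(X)$ between $\pi$-compatible elements) with Theorem~\ref{T_cover}. The latter, read downwards from the larger element, says that the lower covers of a bipartitional relation $V$ are obtained in exactly one of three ways: (i$'$) splitting an underlined block of size at least $2$ into two adjacent underlined blocks; (ii$'$) merging two adjacent nonunderlined blocks; or (iii$'$) turning an underlined singleton into a nonunderlined singleton. For $E(i)$ there are no underlined blocks, so only (ii$'$) can apply, and there is only one pair of adjacent nonunderlined blocks; merging them gives the empty relation $U(\{1,\ldots,n\})$. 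For $F(i)$ the two nonunderlined blocks are separated by $\{\underline i\}$ and hence not adjacent, and the only underlined block is a singleton, so only (iii$'$) applies, uniquely producing $U(\{1,\ldots,i-1\},\{i\},\{i+1,\ldots,n\})$. For $G(i)$ the nonunderlined blocks are again not adjacent and there is no underlined singleton, so only (i$'$) applies; among the two possible orderings of the split of $\{\underline i,\underline{i+1}\}$, only $(\{\underline i\},\{\underline{i+1}\})$ is $\pi$-compatible, hence there is a unique lower cover in $\Bip_\pi(X)$.

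For the moreover claim I would rerun the same case analysis inside $\Bip(X)$, now without the $\pi$-compatibility restriction. For $E(i)$ the unique applicable operation (ii$'$) still admits only one outcome, and for $F(i)$ the unique applicable operation (iii$'$) still admits only one outcome; hence both remain join-irreducible in $\Bip(X)$. By contrast, $G(i)$ would have two lower covers in $\Bip(X)$, one for each ordering of the split of $\{\underline i,\underline{i+1}\}$, which explains why the theorem excludes family~(iii) from the moreover claim.

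I expect no serious obstacle; the whole argument is a guided case check. The one point requiring mild care concerns the boundary cases $i\in\{1,n-1,n\}$, where some blocks in the representations are omitted. The enumeration of lower covers in these cases is carried out identically with one fewer block and yields the same unique cover, so no extraneous join-irreducibles arise.
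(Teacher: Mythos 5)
Your proof is correct, and it takes a genuinely different route from the paper's. The paper argues both directions of the equivalence: after verifying that $E(i)$, $F(i)$, $G(i)$ are join-irreducible (via Theorem~\ref{T_cover} and Proposition~\ref{T_fullrank}), it still has to prove the converse, namely that an arbitrary join-irreducible element of $\Bip_\pi(X)$ has at most one underlined block, no underlined block of size more than two, and so on, in order to conclude that it is on the list. You sidestep that converse analysis entirely with a counting argument: $\Bip_\pi(X)$ is a finite distributive lattice (Theorem~\ref{T_dist}) of rank $3n-2$ (Corollaries~\ref{cor:samerank} and~\ref{cor:rank}), so by Birkhoff's theorem it has exactly $3n-2$ join-irreducibles, and once you exhibit $3n-2$ distinct join-irreducible elements you are done. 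What this buys you is a shorter proof with one fewer case analysis; what it costs is that the argument is less self-contained, relying on the "length of a maximal chain equals the number of join-irreducibles" fact about finite distributive lattices as an extra ingredient. Your concrete verification that each listed element has a unique lower cover in $\Bip_\pi(X)$ (and that $E(i)$, $F(i)$ keep a unique lower cover in $\Bip(X)$ while $G(i)$ acquires a second one) matches the paper's case-by-case reading of Theorem~\ref{T_cover}, and your handling of the boundary cases $i\in\{1,n-1,n\}$ is appropriate.
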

\begin{proof}
The bipartitional relations of type (i) above have rank $1$ and are
clearly join-irreducible elements even in the larger lattice $\Bip(X)$. By
Theorem~\ref{T_cover}, a bipartitional relation of type (ii) covers
exactly one element of $\Bip(X)$, namely
$U(\{1,\ldots,i-1\},\{i\},\{i+1,\ldots,n\})$. This
bipartitional relation belongs of course also to $\Bip_\pi(X)$. Thus
the bipartitional relations listed under (ii) are again
join-irreducible elements even in the larger lattice $\Bip(X)$.

The element $G(i)$ in (iii) is not join-irreducible in $\Bip(X)$ since,
by Theorem~\ref{T_cover}, it covers exactly the two elements 
$$
U(\{1,2,\dots,i-1\},\{\underline
i\},\{\underline{i+1}\},\{i+2,\dots,n\})
$$ 
and
$$
U(\{1,2,\dots,i-1\},\{\underline{i+1}\},\{\underline
i\},\{i+2,\dots,n\}).
$$ 
However, only one, namely the former, is $\pi$-compatible. Hence, by
Proposition~\ref{T_fullrank}, $G(i)$ covers exactly one element in
$\Bip_\pi(X)$, which means that it is join-irreducible in
$\Bip_\pi(X)$. 

Conversely, if $V$ is join-irreducible in $\Bip_\pi(X)$, then, be
definition, it covers exactly one element, $U$ say, in
$\Bip_\pi(X)$. By Proposition~\ref{T_fullrank}, $V$ covers $U$ also in
$\Bip(X)$. ($V$ may cover other elements in $\Bip(X)$ as well,
but they must not be $\pi$-compatible.)
By Theorem~\ref{T_cover}, $U$ can be obtained from $V$ by either
splitting an underlined block into adjacent underlined blocks, or
by joining two adjacent nonunderlined blocks, or by changing a singleton
underlined block into a nonunderlined block. $V$ is join-irreducible
if and only if exactly one such operation yields a $\pi$-compatible
$U$. This excludes the possibility of $V$ having two underlined
blocks, or an underlined block with more than two elements, or three
adjacent nonunderlined blocks. Furthermore, it also excludes the
possibility of $V$ having an underlined block at the same time as
having two adjacent nonunderlined blocks. It is now obvious that only
the possibilities listed under (i)--(iii) remain.
\end{proof}

Figure~\ref{fig:bipji} indicates the Hasse diagram of the subposet of
join-irreducible elements of
$\Bip_{(\{1\},\{2\},\ldots,\{n\})}(\{1,\dots,n\})$.

\begin{figure}[h]
\begin{center}
\input{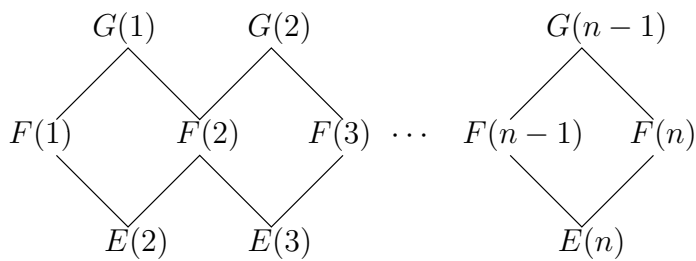}
\end{center}
\caption{Join-irreducible elements of 
$\Bip_{(\{1\},\{2\},\ldots,\{n\})}(\{1,\dots,n\})$}
\label{fig:bipji}
\end{figure}

\section{The J--T decomposition of the
order complex of an interval in the bipartition lattice}
\label{sec:JT}

This section contains preparatory material for the proofs of our main
results in Sections~\ref{s_toc} and \ref{sec:int}. The ultimate goal is
to construct an enumeration of all maximal chains in $\Bip(X)$,
respectively in any interval thereof, such that the results of Babson
and Hersh reviewed in Section~\ref{sec:Morse} become applicable. 
The way that we propose here to arrive there proceeds in two
steps. Recall that, by Proposition~\ref{P_c}, each maximal chain $c$
determines a unique permutation $\pi$ such that all elements of $c$
are $\pi$-compatible. The first step, performed in this section, 
will consist of finding a suitable
enumeration of all permutations. This induces a ``pre-enumeration" of
the maximal chains, by putting them together in smaller groups
according to their associated permutations and enumerating these
groups. Then, in the subsequent sections, we shall refine
this pre-enumeration further to a full enumeration of all maximal
chains by declaring how to enumerate the maximal chains corresponding
to the same permutation. 

For all of $\Bip(X)$, the proposed enumeration of all permutations of
$X$ will be obtained by the classical Johnson--Trotter algorithm
\cite{JohnAZ,Trotter}. For proper intervals in $\Bip(X)$, we will need
to consider a variant adapted to enumerate only specific subgroups of
the full permutation group (namely Young subgroups, although this term
will be of no importance in the sequel; the interested reader may consult 
\cite[Sec.~7.18]{StanBI} for more information). We recall the
Johnson--Trotter algorithm next, and subsequently describe its variant.
In Theorem~\ref{T_Trotter} we prove a property of the Johnson--Trotter
algorithm and of its variant which will be crucial in proving the 
key lemma, Lemma~\ref{L_skip}, and of its adaptation to the results of
Section~\ref{sec:int}, showing that the enumerations of maximal chains
that we construct grows by creating skipped intervals.  
The adaptation of Lemma~\ref{L_skip} to the case of intervals 
in Section~\ref{sec:int} is made possible by the introduction of the
{\em J--T decomposition} of the order complex of an interval in
Definition~\ref{D_Trotterdec}, and by Theorem~\ref{T_Trotterdec}, 
discussing the properties of this decomposition. 

\medskip
The original version of the Johnson--Trotter
algorithm~\cite{JohnAZ,Trotter} is used to
enumerate all permutations of $\{1,2,\ldots,n\}$ in such a way that each
permutation differs from the preceding one by a transposition of
adjacent elements. It may be described recursively as follows.
\begin{enumerate}
\item The Johnson--Trotter enumeration of all permutations of $\{1\}$ is $(\{1\})$.
\item Assume we are given the Johnson--Trotter enumeration of all permutations of
   $\{1,2,\ldots,n-1\}$. If the permutation
   $(\{\sigma_1\},\ldots,\{\sigma_{n-1}\})$
   is an odd numbered item in this enumeration, then we replace it with
   the list 
$$
(\{\sigma_1\},\ldots,\{\sigma_{n-1}\},\{n\}),\quad 
   (\{\sigma_1\}, \ldots,\{n\},\{\sigma_{n-1}\}),\quad  \ldots,\quad 
   (\{n\},\{\sigma_1\},\ldots,\{\sigma_{n-1}\}).
$$
Otherwise we replace
   it with the list 
$$(\{n\},\{\sigma_1\},\ldots,\{\sigma_{n-1}\}),\quad 
   (\{\sigma_1\},\{n\},\ldots,\{\sigma_{n-1}\}),\quad  \ldots,\quad 
   (\{\sigma_1\},\ldots, \{\sigma_{n-1}\}, \{n\}).
$$
\end{enumerate}

For example, the Johnson--Trotter enumeration of all permutations of 
$\{1,2,3\}$ is
\begin{gather*}
(\{1\},\{2\},\{3\}),\quad  (\{1\},\{3\},\{2\}),\quad
(\{3\},\{1\},\{2\}),\\
 (\{3\},\{2\},\{1\}),\quad  (\{2\},\{3\},\{1\}),\quad
(\{2\},\{1\},\{3\}). 
\end{gather*}

Before we are able to describe the announced variant, we need to first
review some facts about ordered partitions and their relation to
bipartitions. 

\begin{definition}
We say that an ordered partition $\pi$ of $X$ {\em refines}
the ordered partition $(C_1,\ldots,C_k)$ if each block $C_i$ is the
union of consecutive blocks of $\pi$.
\end{definition}

\begin{lemma}
For an ordered partition $\pi$ of $X$, a bipartitional relation $U(B_1^{\eps_1},
B_2^{\eps_2},\ldots,\break B_k^{\eps_k})\subseteq X\times X$ is $\pi$-compatible
if and only if $\pi$ refines the ordered partition $(B_1,
B_2,\ldots,\break B_k)$.
\end{lemma}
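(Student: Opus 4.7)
The statement is essentially a formalization of the parenthetical remark made right after Definition~\ref{def:comp}, so the proof amounts to a careful unpacking of two definitions via equation~(\ref{E_obip}). The plan is to prove each direction separately.

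For the forward direction, I would assume $\pi=(C_1,\ldots,C_m)$ is compatible with $U=U(B_1^{\eps_1},\ldots,B_k^{\eps_k})$ and show two things: (a) each $C_j$ is contained in a single block $B_i$, and (b) the blocks $B_i$ consist of consecutively indexed $C_j$'s in the right order. For (a), suppose for contradiction that $x,y \in C_j$ but $x\in B_i$, $y\in B_{i'}$ with $i<i'$; then by (\ref{E_obip}) one has $(x,y)\in U$ and $(y,x)\notin U$, so compatibility would force the $\pi$-index of $x$ to be strictly less than that of $y$, contradicting $x,y\in C_j$. For (b), suppose $C_{j_1},C_{j_3}\subseteq B_i$ with $j_1<j_3$, but $C_{j_2}\subseteq B_{i'}$ for some $j_1<j_2<j_3$ and $i'\neq i$; picking representatives and applying (\ref{E_obip}) together with compatibility in both subcases $i'<i$ and $i'>i$ yields a contradiction with either $j_1<j_2$ or $j_2<j_3$. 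The correct relative order of the groups of $C_j$'s (those inside $B_1$ first, then those inside $B_2$, etc.) then follows by the same kind of argument applied to representatives from different $B_i$'s.

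For the backward direction, I would assume $\pi$ refines $(B_1,\ldots,B_k)$ and verify compatibility directly. Take $x\in C_i$, $y\in C_j$ with $(x,y)\in U$ and $(y,x)\notin U$. By (\ref{E_obip}), the only way this asymmetry can occur is if $x\in B_s$ and $y\in B_t$ for some $s<t$ (the ``same underlined block'' case is ruled out since that would make both pairs belong to $U$). Since $\pi$ refines $(B_1,\ldots,B_k)$, the $C$-blocks contained in $B_s$ all appear before those contained in $B_t$ in the enumeration of $\pi$, so $i<j$, which is exactly the condition of Definition~\ref{def:comp}.

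There is no serious obstacle here; the only thing to be careful about is the two-way nature of (\ref{E_obip}), namely that $(x,y)\in U$ and $(y,x)\notin U$ happens \emph{exclusively} when $x$ and $y$ lie in distinct blocks $B_s,B_t$ with $s<t$, so that the ``within an underlined block'' alternative of (\ref{E_obip}) cannot produce asymmetric pairs and hence is irrelevant to the compatibility condition.
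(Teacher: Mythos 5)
Your proposal is correct and is exactly the intended argument. The paper does not actually supply a proof of this lemma (it is stated without proof as a rephrasing of the parenthetical remark immediately following Definition~\ref{def:comp}), so your two-direction unpacking of~\eqref{E_obip} and the definitions of compatibility and refinement is a complete and faithful filling-in of that gap. One small remark: both the paper's definition of ``refines'' and the paraphrase after Definition~\ref{def:comp} omit an explicit statement that the ordering is preserved (i.e., that if a block of $\pi$ lies in $B_s$ and another in $B_t$ with $s<t$, then the former precedes the latter in $\pi$); your forward-direction argument about the ``correct relative order of the groups of $C_j$'s'' and your backward-direction appeal to this property show that you are reading ``refines'' in the order-preserving sense, which is clearly what the authors intend (and is needed, e.g., for Lemma~\ref{C_uop} to hold).
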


Refinement defines a partial order on the ordered partitions of $X$. The poset
thus obtained is isomorphic to the subposet of $\Bip(X)$ formed by all
bipartitions having only underlined blocks. This isomorphism is made precise in
the following definition.
\begin{definition}
Let $(C_1, C_2,\ldots, C_k)$ be an ordered partition of $X$. We define
the {\em underlined representation} 
of $(C_1, C_2,\ldots, C_k)$ 
in $\Bip(X)$ as the bipartitional relation $U(C_1^1,
C_2^1,\ldots, C_k^1)$, and we denote it by
$\underline{U}(C_1, C_2,\ldots, C_k)$. 
\end{definition}

\begin{lemma}
\label{C_uop}
Let $\pi$ and $\rho$ be ordered partitions of $X$. Then $\pi$ refines $\rho$
if and only if $\underline{U}(\pi)\leq \underline{U}(\rho)$ in $\Bip(X)$.
\end{lemma}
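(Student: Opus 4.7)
The plan is to unfold the definition of $\underline{U}$ and reduce the claim to a tautology about weakly increasing functions between the index sets of the two partitions. The key observation is that, from formula~\eqref{E_obip} specialized to $\eps_i=1$ for every block, one has $(x,y)\in\underline{U}(A_1,\dots,A_m)$ if and only if the index of the block of $(A_1,\dots,A_m)$ containing $x$ is at most the index of the block containing $y$.

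For the forward direction, write $\pi=(D_1,\dots,D_l)$ and $\rho=(C_1,\dots,C_k)$, and assume $\pi$ refines $\rho$. Then there is a weakly increasing surjection $f\colon\{1,\dots,l\}\to\{1,\dots,k\}$ with $D_i\subseteq C_{f(i)}$ for every $i$; this is just a restatement of the definition of refinement. If $(x,y)\in\underline{U}(\pi)$ with $x\in D_s$ and $y\in D_t$, then $s\leq t$, hence $f(s)\leq f(t)$, and the observation of the previous paragraph yields $(x,y)\in\underline{U}(\rho)$.

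For the converse, assume $\underline{U}(\pi)\subseteq\underline{U}(\rho)$. If $x,y$ lie in the same block $D_i$ of $\pi$, then both $(x,y)$ and $(y,x)$ belong to $\underline{U}(\pi)\subseteq\underline{U}(\rho)$, so $x$ and $y$ lie in the same block of $\rho$; hence every $D_i$ is contained in a single block $C_{f(i)}$ of $\rho$. For $i<j$, picking any $x\in D_i$ and $y\in D_j$ gives $(x,y)\in\underline{U}(\pi)\subseteq\underline{U}(\rho)$, which translates to $f(i)\leq f(j)$. Thus $f$ is weakly increasing. Since $\bigcup_i D_i=\bigcup_j C_j=X$, $f$ is automatically surjective, and its preimages are intervals of indices---which is precisely the statement that each $C_j$ is a union of consecutive blocks of $\pi$, i.e.\ that $\pi$ refines $\rho$. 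No step is a genuine obstacle; the only mildly delicate point is translating monotonicity of $f$ into the ``consecutive blocks'' formulation of refinement, and this is immediate from the fact that a weakly increasing function has interval preimages.
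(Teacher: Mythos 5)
Your proof is correct. The paper states Lemma~\ref{C_uop} without proof, treating it as evident from the definitions; your argument is the natural unfolding that the authors have in mind. The key reduction --- that with $\eps_i=1$ for all $i$, formula~\eqref{E_obip} says $(x,y)\in\underline{U}(A_1,\dots,A_m)$ precisely when the block index of $x$ is at most that of $y$ --- is exactly the right observation, and the passage through the weakly increasing block map $f$ is clean; the fact that weakly increasing maps on linearly ordered finite sets have interval preimages translates directly into the ``union of consecutive blocks'' condition in the paper's definition of refinement. No gaps.
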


If $U=U(B_1^{\eps_1}, B_2^{\eps_2},\ldots,
B_k^{\eps_k})$ and $V=U(C_1^{\eta_1}, C_2^{\eta_2},\ldots,
C_l^{\eta_l})$, then we will be interested in finding all permutations
refining the ordered partition $(B_1, B_2,\ldots, B_k)$, as well as
the ordered partition $(C_1, C_2,\ldots, C_l)$.

\begin{corollary} \label{cor:refine}
A permutation $\pi$ of $X$ refines the ordered partitions $(B_1,
B_2,\ldots, B_k)$ and $(C_1, C_2,\ldots, C_l)$ if and only if
$$\underline{U}(\pi)\leq \underline{U}(B_1,
B_2,\ldots, B_k)\wedge \underline{U}(C_1, C_2,\ldots, C_l).$$
\end{corollary}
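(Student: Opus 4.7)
The statement is an immediate consequence of Lemma~\ref{C_uop} together with the defining property of the meet operation in the lattice $\Bip(X)$, so the plan is very short. First I would apply Lemma~\ref{C_uop} twice: $\pi$ refines $(B_1,B_2,\ldots,B_k)$ if and only if $\underline{U}(\pi)\le\underline{U}(B_1,B_2,\ldots,B_k)$, and similarly $\pi$ refines $(C_1,C_2,\ldots,C_l)$ if and only if $\underline{U}(\pi)\le\underline{U}(C_1,C_2,\ldots,C_l)$.

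Next I would combine the two inequalities using the universal property of the meet in the lattice $\Bip(X)$ guaranteed by Theorem~\ref{thm:lattice}: an element $W\in\Bip(X)$ satisfies $W\le A$ and $W\le B$ simultaneously if and only if $W\le A\wedge B$. Setting $W=\underline{U}(\pi)$, $A=\underline{U}(B_1,B_2,\ldots,B_k)$, and $B=\underline{U}(C_1,C_2,\ldots,C_l)$ yields the claimed equivalence.

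There is essentially no obstacle here; the whole argument is a two-line chain of ``if and only if'' steps. The only thing to keep in mind is that the meet $\underline{U}(B_1,B_2,\ldots,B_k)\wedge\underline{U}(C_1,C_2,\ldots,C_l)$ is an honest element of $\Bip(X)$ (not necessarily of the sub-poset of underlined representations), but that is irrelevant for the statement, which only asks about comparability in $\Bip(X)$.
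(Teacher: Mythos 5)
Your proof is correct and matches the paper's (implicit) intention exactly: the corollary is presented without proof precisely because it follows from Lemma~\ref{C_uop} applied twice together with the universal property of the meet, which is what you wrote. Your closing remark about the meet possibly having nonunderlined blocks is also the same caveat the paper records in the paragraph immediately after the corollary.
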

Note that $\underline{U}(B_1,
B_2,\ldots, B_k)\wedge \underline{U}(C_1, C_2,\ldots, C_l)$
is taken in $\Bip(X)$. Hence, the resulting bipartitional relation may also
have nonunderlined blocks in its ordered bipartition
representation. By Proposition~\ref{P_cont}, such a bipartitional
relation cannot contain a bipartition having only underlined blocks,
and in that case there is no permutation refining both $(B_1,
B_2,\ldots, B_k)$ and $(C_1,C_2,\ldots, C_l)$. If, however,
  $\underline{U}(B_1,
B_2,\ldots, B_k)\wedge \underline{U}(C_1, C_2,\ldots, C_l)$ has an
ordered bipartition representation $(D_1^1,\ldots,D_m^1)$ consisting of
underlined blocks only, then $\pi$ refines both $(B_1,
B_2,\ldots, B_k)$ and $(C_1,C_2,\ldots, C_l)$ if and only if it refines
$(D_1,\ldots,D_m)$. Therefore enumerating all permutations associated to
some maximal chain containing $U$ and $V$ is equivalent to enumerating
all permutations refining a given ordered partition. 

Now we adapt the Johnson--Trotter algorithm to list all permutations 
refining a given ordered partition $(C_1,\ldots,C_k)$ of
$\{1,2,\ldots,n\}$ such that each permutation differs from
the preceding one by a transposition of adjacent blocks, as follows.
\begin{enumerate}
\item For $n=1$ we may only have $k=1$, $C_1=\{1\}$, and we list the
   permutation $(\{1\})$.
\item Assume we already know how to build the Johnson--Trotter enumeration
of all permutations refining any given ordered partition of
$\{1,2,\dots,n-1\}$. Let $(C_1,\ldots,C_k)$ be an ordered partition
of $\{1,2,\dots,n\}$. If $\{n\}=C_m$ is a block by itself, then
$(C_1,\ldots,C_{m-1},C_{m+1},\ldots,C_k)$ is an ordered partition of
$\{1,2,\dots,n-1\}$. List all permutations $\pi$ of $\{1,2,\dots,n-1\}$ refining
$(C_1,\ldots,C_{m-1},C_{m+1},\ldots,\break C_n)$, and insert $\{n\}$ between
the last block of $\pi$ contained in $C_{m-1}$ and the first block of $\pi$
contained in $C_{m+1}$. (At most one of these blocks may be missing if
$m=1$ or $m=k$.) We obtain an appropriate enumeration. 

From now on we
may assume that the block $C_m$ containing $n$ contains at least one more
element.  Introducing $C_j'=C_j\setminus \{n\}$ for $j=1,\ldots,k$,
$(C_1',\ldots,C_k')$ is an ordered partition of
$\{1,2,\ldots,n-1\}$, and we consider the Johnson--Trotter enumeration of all
permutations refining this ordered partition. Let
$$\pi=(\{\pi_1\},\ldots,\{\pi_r\},\ldots,\{\pi_s\},\ldots,\{\pi_n\})$$
be a permutation in this enumeration, where
$C_m'=\{\pi_r,\ldots,\pi_s\}$. (Note that $r$ and $s$ are the same for all
permutations in the enumeration.) If $\pi$ is an odd numbered item in
this enumeration, then we replace it with the list
\begin{multline*}
\kern1cm
(\{\pi_1\},\ldots,\{\pi_r\},\ldots,\{\pi_s\},\{n\},\ldots,\{\pi_n\}),\\
(\{\pi_1\},\ldots,\{\pi_r\},\ldots,\{n\},\{\pi_s\},\ldots,\{\pi_n\}),\\
\ldots, \quad (\{\pi_1\},\ldots,\{n\},\{\pi_r\},\ldots,\{\pi_s\},\ldots,
\{\pi_n\}),
\end{multline*}
otherwise we replace it with the list
\begin{multline*}
\kern1cm
(\{\pi_1\},\ldots,\{n\},\{\pi_r\},\ldots,\{\pi_s\},\ldots,
\{\pi_n\}),\\
(\{\pi_1\},\ldots,\{\pi_r\},\{n\},\ldots,\{\pi_s\},\ldots,\{\pi_n\}),\\
\dots,\quad 
(\{\pi_1\},\ldots,\{\pi_r\},\ldots,\{\pi_s\},\{n\},\ldots,
\{\pi_n\}).
\end{multline*}
\end{enumerate}
For example, the Johnson--Trotter enumeration of all permutations refining
$(\{1,3\},\break \{2,4\})$ is built recursively as follows.
\begin{enumerate}
\item The list of all permutations refining $(\{1\})$ is $(\{1\})$.
\item The list of all permutations refining $(\{1\},\{2\})$ is $(\{1\},\{2\})$.
\item The list of all permutations refining $(\{1,3\},\{2\})$ is
$$(\{1\},\{3\},\{2\}),\quad  (\{3\},\{1\}, \{2\}).$$
\item The list of all permutations refining $(\{1,3\},\{2,4\})$ is
\begin{gather*}
(\{1\},\{3\},\{2\},\{4\}),\quad (\{1\},\{3\},\{4\}, \{2\}),\\
(\{3\},\{1\},\{4\},\{2\}),\quad  (\{3\},\{1\},\{2\},\{4\}).
\end{gather*}
\end{enumerate}

Recall (see \cite[p.~324]{BjoeAA})
that the {\it shelling} of a simplicial complex $\triangle$ is an
enumeration $F_1,\ldots, F_m$ of its facets such that each facet has the
same dimension as $\triangle$ and, for each $i>1$, any face $\tau\subset
F_i$ that is contained in some preceding $F_k$, is also
contained in a preceding $F_j$ whose intersection $F_j\cap F_i$ with
$F_i$ has codimension one (that is,
making $F_j\cap F_i$ ``as large as possible'').
Our next goal is to decompose each order complex 
$\triangle([U,V]\setminus \{U,V\})$ in a manner resembling a shelling.
In the decomposition we are going to describe, the role of the facets
in a shelling will be played by subcomplexes of the form
\begin{equation} \label{eq:triangle} 
\triangle([U,V]_\pi\setminus \{U,V\}),
\end{equation}
where $\pi$ is a permutation such that $U$ and $V$ are
$\pi$-compatible, and $[U,V]_\pi$ stands for the subposet of $[U,V]$
consisting of all $\pi$-compatible bipartitional relations.
As a consequence, we need to understand intersections of subcomplexes
of the form \eqref{eq:triangle}. For this, it is necessary to know
when a bipartitional relation is simultaneously compatible with two
different ordered partitions. In this regard, Lemma~\ref{C_uop} yields
the following immediate characterization.

\begin{corollary}
\label{C_simulc}
A bipartitional relation $U$ is simultaneously compatible with the
ordered partitions $(B_1,\ldots, B_k)$ and $(C_1,\ldots,C_l)$ if and
only if $U$ is compatible with the ordered partition $(D_1,\ldots,D_m)$
given by
$$
\underline U(D_1,\ldots,D_m)=\underline U(B_1,\ldots, B_k) \vee
\underline U(C_1,\ldots,C_l).
$$
\end{corollary}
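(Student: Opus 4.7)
The plan is to translate the compatibility conditions into inequalities in $\Bip(X)$ and then use the universal property of the join. Denote by $\rho_U$ the ordered partition $(B_1,\ldots,B_l)$ obtained from the ordered bipartition representation $U(B_1^{\eps_1},\ldots,B_l^{\eps_l})$ of $U$ by forgetting the underlining. The lemma immediately following Definition~7.1 (the one stating that $U$ is $\pi$-compatible iff $\pi$ refines $(B_1,\ldots,B_l)$) combined with Lemma~\ref{C_uop} gives the key reformulation: for any ordered partition $\pi$ of $X$,
\[
U \text{ is }\pi\text{-compatible} \iff \underline{U}(\pi)\le \underline{U}(\rho_U)\quad\text{in }\Bip(X).
\]

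Before applying this, I need to verify that the join $\underline U(B_1,\ldots,B_k)\vee \underline U(C_1,\ldots,C_l)$ is genuinely of the form $\underline U(D_1,\ldots,D_m)$ for some ordered partition, so that the definition of $(D_1,\ldots,D_m)$ in the statement is unambiguous. For this I observe that a bipartitional relation has all blocks underlined if and only if it contains the diagonal $\{(x,x):x\in X\}$, since $(x,x)\in U$ forces $\eps_i=1$ for the block $B_i\ni x$. Both $\underline U(B_1,\ldots,B_k)$ and $\underline U(C_1,\ldots,C_l)$ contain the diagonal, and any join in $\Bip(X)$ contains its arguments, so the join contains the diagonal and hence is of the form $\underline U(\sigma)$ for a unique ordered partition $\sigma$, which we call $(D_1,\ldots,D_m)$.

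With this in hand, the main equivalence is a short chain of iff's:
\begin{align*}
U \text{ is compatible with both} &\iff \underline U(B_1,\ldots,B_k)\le \underline U(\rho_U) \text{ and } \underline U(C_1,\ldots,C_l)\le \underline U(\rho_U)\\
 &\iff \underline U(B_1,\ldots,B_k)\vee \underline U(C_1,\ldots,C_l)\le \underline U(\rho_U)\\
 &\iff \underline U(D_1,\ldots,D_m)\le \underline U(\rho_U)\\
 &\iff (D_1,\ldots,D_m) \text{ refines } \rho_U\\
 &\iff U \text{ is }(D_1,\ldots,D_m)\text{-compatible},
\end{align*}
where the first and last equivalences use the reformulation above, the second is the universal property of $\vee$, and the fourth is Lemma~\ref{C_uop}.

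There is essentially no real obstacle here; the only point that is not pure bookkeeping is the preliminary verification that the join of two all-underlined bipartitions is again all-underlined, which is why I stated it as a separate step. Everything else is an immediate consequence of the machinery already developed in Sections~\ref{sec:bipart} and~\ref{sec:pi-compatible}.
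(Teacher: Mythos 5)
Your proof is correct and takes essentially the same route the paper intends, namely the one the paper leaves implicit when it says that Corollary~\ref{C_simulc} is an ``immediate characterization'' following from Lemma~\ref{C_uop}: translate $\pi$-compatibility into a refinement statement via the unnumbered lemma preceding Lemma~\ref{C_uop}, convert refinement into an inequality in $\Bip(X)$ via Lemma~\ref{C_uop} itself, and then use the universal property of the join. The one point worth highlighting is that you explicitly verify that $\underline U(B_1,\ldots,B_k)\vee\underline U(C_1,\ldots,C_l)$ is again all-underlined (via the clean observation that a bipartitional relation is all-underlined iff it contains the diagonal), which is a well-definedness step the paper silently takes for granted when it introduces $(D_1,\ldots,D_m)$.
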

Obviously $(D_1,\ldots,D_m)$ is the ``finest common coarsening'' of the
ordered partitions $(B_1,\ldots, B_k)$ and $(C_1,\ldots,C_l)$.
In particular, in the case where $X=\{1,2,\dots,n\}$ and the permutations
$\sigma=(\{\sigma_1\},\ldots, \{\sigma_n\})$  and
$\pi=(\{\pi_1\},\ldots, \{\pi_n\})$ differ in a
transposition of adjacent blocks, say,
$\pi=(\{\sigma_1\},\ldots,\{\sigma_{i+1}\},\{\sigma_i\},\ldots
\{\sigma_n\})$,
then
$$\underline U(\sigma)\vee \underline U(\pi)=(\{\underline{\sigma_1}\},
\ldots,
\{\underline{\sigma_{i}},\underline{\sigma_{i+1}}\},
\ldots,\{\underline{\sigma_n}\}).$$
Thus $\underline U(\sigma)\vee \underline U(\pi)$ covers $\underline
U(\sigma)$ and $\underline U(\pi)$ in $\Bip(\{1,2,\dots,n\})$, and the
   intersection\break $\Bip_\sigma(\{1,2,\dots,n\})\cap
     \Bip_\pi(\{1,2,\dots,n\})$ has the largest possible rank that a
       proper intersection of two lattices of the form
       $\Bip_\sigma(\{1,2,\dots,n\})$ may have. As a consequence, for
every interval $[U,V]$ such that $U$ and $V$ are simultaneously
$\sigma$- and $\pi$-compatible, the
subcomplexes $\triangle([U,V]_\sigma\setminus
\{U,V\})$ and $\triangle([U,V]_\pi\setminus \{U,V\})$ are either equal
or their intersection has codimension $3$ in both subcomplexes (recall
the formula \eqref{E_rank} for the rank function and the fact that,
in the latter case,
the above intersection arises by identifying $\sigma_i$ and
$\sigma_{i+1}$). This makes such
       intersections analogous to 
       codimension $1$ faces in a shelling (recall again the
definition \cite[p.~324]{BjoeAA} of a shelling).
 
The following technical result is the key result for establishing the
above indicated shelling-like property of the announced J--T
decomposition (to be defined in Definition~\ref{D_Trotterdec}) 
in Theorem~\ref{T_Trotterdec}.(iii).

\begin{theorem}
\label{T_Trotter}
Let $(C_1,\ldots,C_k)$ be an ordered partition of the set
$\{1,2,\dots,n\}$ and consider the Johnson--Trotter enumeration of all
permutations refining this ordered partition. If $\tau$ precedes
$\sigma$ in this enumeration, then there is a permutation $\pi$ preceding
$\sigma$ in this enumeration, 
which differs from $\sigma$ only in a transposition of adjacent
blocks, and which satisfies
$$
\underline U(\tau)\vee \underline U(\sigma)\geq \underline U (\pi)\vee
\underline U (\sigma) \gtrdot \underline U (\sigma).
$$
Here, $V_1\gtrdot V_2$ means that $V_1$ covers $V_2$.
\end{theorem}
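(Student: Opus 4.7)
The plan is to prove Theorem~\ref{T_Trotter} by induction on $n=|X|$, with the base case $n=1$ being vacuous. For the inductive step I distinguish Case~(A), where $\{n\}$ is a singleton block $C_m$ of the ordered partition, from Case~(B), where $n$ lies in a block $C_m$ of size at least two.

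In Case~(A) the position of $n$ is the same in every permutation of the Johnson--Trotter enumeration, so removing $n$ from $\tau$ and $\sigma$ yields two permutations, with $\tau'$ preceding $\sigma'$ in the Johnson--Trotter enumeration of refinements of the reduced partition of $\{1,\ldots,n-1\}$. The inductive hypothesis supplies a $\pi''$ differing from $\sigma'$ by the adjacent swap of two elements $\sigma'_i,\sigma'_{i+1}$ in some block $C_j$ with $j\neq m$; re-inserting $n$ at its fixed position keeps these two elements adjacent in $\sigma$, and the resulting $\pi$ precedes $\sigma$ because $\pi''$ precedes $\sigma'$. The ``same join-block'' condition lifts from $\underline U(\tau')\vee \underline U(\sigma')$ to $\underline U(\tau)\vee \underline U(\sigma)$ because adjoining the vertex $n$ to the digraph $\underline U(\tau)\cup \underline U(\sigma)$ can only merge strongly connected components, never split them.

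For Case~(B), I subdivide further according to whether $\tau$ and $\sigma$ lie in the same inner Johnson--Trotter group, i.e., whether they share the same underlying permutation $\pi'$ of $\{1,\ldots,n-1\}$ after deletion of $n$. In the same-group subcase, $\tau$ preceding $\sigma$ forces $\sigma$ not to be the first element of its group, so the immediate predecessor $\pi$ of $\sigma$ in the group swaps $n$ with some neighbouring element $x$ of $\sigma$. A uniform position count (valid for both the odd- and even-indexed variants of the recursive construction) shows that $n$ precedes $x$ in $\sigma$ while $x$ precedes $n$ in $\tau$, and this direct disagreement places $\{n,x\}$ in a single block of $\underline U(\tau)\vee \underline U(\sigma)$.

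In the different-group subcase I invoke the induction in the outer Johnson--Trotter enumeration of refinements of $(C_1\setminus\{n\},\ldots,C_k\setminus\{n\})$, obtaining $\pi''$ preceding $\sigma'$ via an adjacent swap of $\sigma'_i,\sigma'_{i+1}$ in some common block $C_j'$, with $\sigma'_i,\sigma'_{i+1}$ lying in a common block of $\underline U(\tau')\vee \underline U(\sigma')$. When these two elements remain adjacent in $\sigma$ (which is automatic whenever $C_j'\neq C_m\setminus\{n\}$), the same swap lifts to a valid $\pi$ exactly as in Case~(A). The hard case, which is the main obstacle of the proof, is when $C_j'=C_m\setminus\{n\}$ and $n$ wedges itself between $\sigma'_i$ and $\sigma'_{i+1}$ in $\sigma$. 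Here I would combine the inductively provided path from $\sigma'_{i+1}$ to $\sigma'_i$ with the two edges $\sigma'_i\to n$ and $n\to \sigma'_{i+1}$ present in $\underline U(\sigma)$ to form directed cycles through $n$, thereby showing that $\sigma'_i$, $n$, and $\sigma'_{i+1}$ all share a single block of $\underline U(\tau)\vee \underline U(\sigma)$. Since $n$ is strictly interior to $C_m$ in $\sigma$, $\sigma$ is not first in its inner group and its immediate predecessor $\pi^{*}$ within the group swaps $n$ with whichever of $\sigma'_i$ or $\sigma'_{i+1}$ is adjacent to $n$ on the side dictated by the parity of $\sigma'$; either way the two swapped elements sit inside the common join-block just identified, so $\pi:=\pi^{*}$ meets all requirements.
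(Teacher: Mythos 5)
Your proposal is correct and follows essentially the same inductive strategy as the paper's proof: delete $n$, apply induction to the reduced enumeration, re-insert $n$, and distinguish whether $\sigma$ and $\tau$ reduce to the same permutation of $\{1,\dots,n-1\}$; your separation into Cases~(A)/(B) by whether $\{n\}$ is a singleton block is a repackaging of what the paper folds into a single uniform case split, and your same-group, different-group adjacent, and ``wedged'' subcases correspond precisely to the paper's Case~2, Subcase~1a, and Subcase~1b, respectively. One small inaccuracy: in the same-group subcase, your phrase ``a uniform position count \dots shows that $n$ precedes $x$ in $\sigma$ while $x$ precedes $n$ in $\tau$'' is not actually uniform --- the direction of the disagreement reverses depending on whether $\sigma'$ is odd- or even-indexed --- but the conclusion you need (that the relative order of $n$ and $x$ differs between $\sigma$ and $\tau$, forcing $\{n,x\}$ into a common underlined block of $\underline U(\tau)\vee\underline U(\sigma)$) holds for either parity, and you evidently know this, since you later invoke the parity explicitly when picking $\pi^*$.
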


\begin{example}
The permutation $\tau=(\{4\},\{1\},\{2\},\{3\})$ precedes 
$\sigma=(\{4\},\{2\},\{3\},\{1\})$ in the Johnson--Trotter enumeration
of all permutations of $\{1,2,3,4\}$, and we have 
$$
\underline U(\tau)\vee \underline U(\sigma)=
(\{\underline 4\},\{\underline 1, \underline 2, \underline 3\}).
$$
The permutation $\pi=(\{4\},\{3\},\{2\},\{1\})$ also precedes $\sigma$ in the
Johnson--Trotter enumeration, and $\pi$ differs from $\sigma$ only in the
transposition of the adjacent blocks $\{2\}$ and $\{3\}$. Thus we have 
$$
\underline U (\pi)\vee \underline U (\sigma)=
(\{\underline 4\},\{\underline 2, \underline 3\}, \{\underline 1\}),
$$
implying
$$
(\{\underline 4\},\{\underline 1, \underline 2, \underline 3\})
\geq (\{\underline 4\},\{\underline 2, \underline 3\}, \{\underline 1\})
\gtrdot (\{\underline 4\},\{\underline
2\},\{\underline 3\},\{\underline 1\}).
$$
We should perhaps also point out that, here, 
$\pi$ {\it does not immediately} 
precede $\sigma$ in the Johnson--Trotter enumeration.
\end{example}

\begin{proof}[Proof of Theorem~\ref{T_Trotter}]
We prove the statement by induction on $n$. There is nothing to prove
for $n=1$. Assume that the statement holds up to $n-1$.
Consider a $\tau$
preceding a $\sigma$ in the Johnson--Trotter enumeration of all
permutations refining the ordered partition $(C_1,\ldots,C_k)$ of
$\{1,2,\dots,n\}$. Let $\sigma\setminus n$, respectively $\tau\setminus
n$, denote the permutations obtained from $\sigma$, respectively $\tau$, by
deleting the block $\{n\}$. Let $(C_1',\ldots,C_l')$ be the ordered
partition considered right before inserting $n$ in the Johnson--Trotter
enumeration associated to $(C_1,\ldots,C_k)$. In other words, we have
$l=k-1$ and
$(C_1',\ldots,C_{k-1}')=(C_1,\ldots,C_{m-1},C_{m+1},\ldots,C_k)$ if
$\{n\}=C_m$ is a block by itself
(for some $m$), and we have $l=k$ and
$C_j'=C_j\setminus \{n\}$ for all $j$ otherwise. Note that $\sigma\setminus n$
and $\tau\setminus n$ belong to the set of all permutations of
$\{1,2,\dots,n-1\}$ refining $(C_1',\ldots,C_l')$.

\medskip {\sc Case 1.}  $\sigma\setminus n\neq \tau\setminus n$.
Then $\tau\setminus n$ precedes $\sigma\setminus n$ in the Johnson--Trotter
enumeration of all permutations refining $(C_1',\ldots,C_l')$. By our
induction hypothesis, there is a permutation $\pi'$ preceding
$\sigma\setminus n$ in this enumeration such that we have
\begin{equation}
\label{E_indhyp}
\underline U(\tau\setminus n)\vee \underline U(\sigma\setminus n)\geq
\underline U (\pi')\vee
\underline U (\sigma\setminus n) \gtrdot \underline U (\sigma\setminus n).
\end{equation}
Here, for some $i$, we have
$$
\underline U (\pi')\vee\underline U (\sigma\setminus n)
=(\{\underline{\sigma_1}\},\ldots,
\{\underline{\sigma_i}, \underline{\sigma_{i+1}}\},\ldots, \{\underline{\sigma_{n-1}}\}),
$$
and we may assume
$$
\sigma\setminus n
=(\{\sigma_1\},\ldots,\{\sigma_i\},\{\sigma_{i+1}\},\ldots, \{\sigma_{n-1}\})
$$
and 
$$
\pi'=(\{\sigma_1\},\ldots,
\{\sigma_{i+1}\},\{\sigma_{i}\},\ldots,\{\sigma_{n-1}\}).
$$

\smallskip
{\it Subcase 1a.} $\sigma=
(\{\sigma_1\},\ldots,\{\sigma_j\},\{n\},\{\sigma_{j+1}\},\ldots,
\{\sigma_{n-1}\})$ holds for some $j\neq i$. Then we set
$$
\pi:=(\{\pi_1'\},\ldots,\{\pi'_j\},\{n\},\{\pi'_{j+1}\},\ldots,
\{\pi'_{n-1}\}),
$$
where $\pi'=(\{\pi_1'\},\ldots,\{\pi_{n-1}'\})$. In other words, we
insert $\{n\}$ at the same place into $\pi'$ as the place where it needs
to be inserted into $\sigma\setminus n$ to obtain $\sigma$. By the recursive
structure of the Johnson--Trotter enumeration, $\pi$ precedes $\sigma$ in the
enumeration of all permutations refining $(C_1,\ldots,C_k)$. Moreover,
$\pi$ differs from $\sigma$ only in transposing the adjacent blocks
$\{\sigma_i\}$ and $\{\sigma_{i+1}\}$. Thus, 
\begin{equation} \label{eq:pisi} 
\underline U (\pi)\vee \underline U (\sigma)=
(\{\underline{\sigma_1}\},\ldots,
\{\underline{\sigma_i},\underline{\sigma_{i+1}}\},\ldots,
\{\underline{\sigma_{n}}\}),
\end{equation}
and it covers $\underline U
(\sigma)$ in $\Bip(\{1,2,\dots,n\})$. Finally, by (\ref{E_indhyp}),
the ordered bipartition representation of $\underline U(\tau\setminus
n)\vee \underline U(\sigma\setminus n)$ contains $\sigma_i$ and
$\sigma_{i+1}$ in the same underlined block. Therefore $\sigma_i$ and
$\sigma_{i+1}$ are also in the
same underlined block in the ordered bipartition representing $\underline
U(\tau)\vee \underline U(\sigma)$- Together with \eqref{eq:pisi}, this
implies
$\underline U(\tau)\vee \underline U(\sigma)\geq \underline U(\pi)\vee
\underline U(\sigma)$.

\smallskip
{\it Subcase 1b.} $\sigma=
(\{\sigma_1\},\ldots,\{\sigma_i\},\{n\},\{\sigma_{i+1}\},\ldots,
\{\sigma_{n-1}\})$. In the same way as at the end of the previous subcase,
the fact that $\sigma_i$ and $\sigma_{i+1}$ belong to the same
underlined block of  $\underline U(\tau\setminus
n)\vee \underline U(\sigma\setminus n)$ implies that they also belong to
the same underlined block of $\underline U(\tau)\vee \underline
U(\sigma)$. Now, $(\sigma_i,n)\in \underline U(\sigma)$ and
$(n,\sigma_{i+1})\in \underline U(\sigma)$ imply that $n$ also belongs
to the same underlined block of 
$\underline U(\tau)\vee\underline U(\sigma)$. 
Thus $\underline U(\tau)\vee \underline
U(\sigma)$ contains the bipartitional relation $V$ represented by the
  ordered bipartition
$$(\{\underline{\sigma_1}\},\ldots,\{\underline{\sigma_{i-1}}\},
  \{\underline{\sigma_i},\underline{n},\underline{\sigma_{i+1}}\},\{\underline{\sigma_{i+2}}\}, \ldots, 
\{\underline{\sigma_{n-1}}\}).$$ 
As a consequence, by
  Lemma~\ref{C_uop}, the block $C_m$ containing $n$ also contains
  $\sigma_i$ and $\sigma_{i+1}$. Therefore the permutations
$$
\rho'=(\{\sigma_1\},\ldots,\{n\},\{\sigma_i\},\{\sigma_{i+1}\},\ldots,
\{\sigma_{n-1}\})$$
and 
$$
\rho''=(\{\sigma_1\},\ldots,\{\sigma_i\},\{\sigma_{i+1}\},\{n\},\ldots,
\{\sigma_{n-1}\})
$$
both refine $(C_1,\ldots,C_k)$. By the structure of the Johnson--Trotter
enumeration, one of them precedes $\sigma$. This one may be chosen as
$\pi$. It follows that
$$
\underline U(\tau)\vee \underline U(\sigma)\geq V
\gneqq\underline U(\pi)\vee \underline U(\sigma)
\gtrdot \underline U(\sigma).
$$

\medskip {\sc Case 2.}  $\sigma\setminus n= \tau\setminus n$.
Without loss of generality, we may assume that $\sigma\setminus n=
\tau\setminus n$ is an even numbered item in the Johnson--Trotter enumeration of
all permutations refining $(C_1',\ldots,C_l')$. Since $\tau$ precedes
$\sigma$ in the Johnson--Trotter enumeration of all permutations refining
$(C_1,\ldots,C_k)$, we must have
$$
\tau=(\{\sigma_1\},\ldots,\{\sigma_i\},\{n\},\{\sigma_{i+1}\},\ldots,
\{\sigma_{n-1}\})$$
and 
$$
\sigma=(\{\sigma_1\},\ldots,\{\sigma_j\},\{n\},\{\sigma_{j+1}\},\ldots,
\{\sigma_{n-1}\})
$$
for some $i<j\leq n-1$. It is easy to see that
$$\underline U(\tau)\vee \underline
U(\sigma)
=U(\{\underline{\sigma_1}\}, \ldots,
\{\underline{\sigma_{i}}\},\{\underline{\sigma_{i+1}},\ldots  
\underline{\sigma_{j}},\underline{n}\},\{\underline {\sigma_{j+1}}\}, \ldots,   
\{\underline{\sigma_{n-1}}\}).$$ 
As a consequence, by
  Lemma~\ref{C_uop}, the block $C_m$ containing $n$ also contains
$\sigma_{i+1},\ldots \sigma_{j}$. Therefore the permutation
$$
\pi=(\{\sigma_1\},\ldots,\{\sigma_{j-1}\},\{n\},\{\sigma_{j}\},\ldots,
\{\sigma_{n-1}\})
$$
also refines $(C_1,\ldots,C_k)$ and it precedes $\sigma$ in the Johnson--Trotter
enumeration. We have
$$\underline U(\pi)\vee \underline U(\sigma)
=(\{\underline{\sigma_1}\},\ldots,\{\underline{\sigma_{j-1}}\},
\{\underline{n},\underline{\sigma_{j}}\},
\{\underline{\sigma_{j+1}}\}\ldots,
\{\underline{\sigma_{n-1}}\}).$$
In particular, $\underline U(\pi)\vee \underline U(\sigma)$
covers $\underline U(\sigma)$, and
$\underline U(\pi)\vee \underline U(\sigma)$ is contained in
$\underline U(\tau)\vee \underline U(\sigma)$.
\end{proof}

Using Theorem~\ref{T_Trotter}, we now show that, for any interval
$[U,V]\subseteq \Bip(X)$, the order complex $\triangle([U,V]\setminus
\{U,V\})$ has a ``shelling-like" decomposition. Namely, we may write
\begin{equation} \label{eq:preshell} 
\triangle([U,V]\setminus \{U,V\})=\bigcup_{\pi}
\triangle([U,V]_\pi\setminus \{U,V\}),
\end{equation}
where the union is taken over all permutations $\pi$ such that
both $U$ and $V$ are $\pi$-compatible.
(The notation $[U,V]_\pi$ was defined just after \eqref{eq:triangle}.)
We may enumerate these permutations using the Johnson--Trotter
enumeration.
To see the similarity with a shelling, the reader should imagine that
the role of facets in a shelling is played in the decomposition
\eqref{eq:preshell} by the subcomplexes 
$\triangle([U,V]_\pi\setminus \{U,V\})$, all of which have the same
dimension as $\triangle([U,V]\setminus \{U,V\})$ by
Corollary~\ref{cor:samerank}. Moreover, 
by Theorem~\ref{T_dist}, each poset $[U,V]_{\pi}$ is a
distributive lattice and, by a result due to Provan \cite{ProvAA}
(cf.\ also \cite[Cor.~2.2]{Billera-Hsiao-Provan}),
the order complex $\triangle([U,V]_\pi\setminus \{U,V\})$ is either
the order complex of a Boolean lattice (and thus isomorphic to the
boundary complex of a simplex) or it is a polyhedral ball. In the case
where $U=\emptyset$ and $V=X\times X$, i.e., when $[U,V]=\Bip(X)$, the
sublattice $[U,V]_{\pi}$ is never a Boolean lattice, thus we decompose
the order complex as a union of balls. 
Note next that for proper intervals $[U,V]\subset \Bip(X)$ it may happen
that $\triangle([U,V]_\pi\setminus \{U,V\})=\triangle([U,V]_\sigma\setminus
\{U,V\})$ holds for some $\pi\neq \sigma$. For example, in the case
where the ordered bipartition $(B_1^{\eps_1}, B_2^{\eps_2},\ldots,
B_k^{\eps_k})$ representing $U$ and the ordered bipartition
$(C_1^{\eta_1}, C_2^{\eta_2},\ldots,C_l^{\eta_l})$ representing $V$
satisfy $\eps_1=\eta_1=0$ and $B_1=C_1=\{1,2\}$ then a permutation
$\pi=(\{\pi_1\},\ldots, \{\pi_n\})$ refining both $(B_1,
B_2,\ldots, B_k)$ and $(C_1, C_2,\ldots,C_l)$ must satisfy
$\{\pi_1,\pi_2\}=\{1,2\}$, but it does not matter whether $\pi_1=1$ and
$\pi_2=2$ or $\pi_1=2$ and $\pi_2=1$. By Proposition~\ref{P_c}, $\pi$
arises as the only ordered partition that is compatible with all
elements of some maximal chain $c$ containing $U$ and $V$, 
but the choice of the values of $\pi_1$ and $\pi_2$ is related to the
part of $c$ that is outside the interval $[U,V]$. We may overcome this
difficulty by keeping only the first copy of each
$\triangle([U,V]_\pi\setminus \{U,V\})$.

\begin{definition}
\label{D_Trotterdec}
Let $[U,V]\subseteq \Bip(X)$ be an interval, where $U=(B_1^{\eps_1},
B_2^{\eps_2},\ldots,  B_k^{\eps_k})$, $V=(C_1^{\eta_1},
C_2^{\eta_2},\ldots,C_l^{\eta_l})$, and let the ordered partition
$(D_1,\ldots,D_m)$ be given by 
$$\underline U(D_1,\ldots,D_m)=\underline
U(B_1, B_2,\ldots,B_k)\wedge \underline U(C_1, C_2,\ldots,C_l).$$
We define the {\em J--T decomposition } of $\triangle([U,V]\setminus \{U,V\})$ as follows:
\begin{enumerate}
\item We list the order complexes $\triangle([U,V]_\pi\setminus
\{U,V\})$ in
the order of the Johnson--Trotter enumeration of permutations $\pi$ refining
$(D_1,\ldots,D_m)$. (By Cor\-ol\-lary~\ref{cor:refine}, these are the
permutations $\pi$ such that $U$ and $V$ are both $\pi$-compatible.)
\item If the same simplicial complex occurs several times in the above
   enumeration, we keep only its first occurrence and remove all
   other occurrences.
\item The remaining list $\triangle_1, \ldots, \triangle_N$ is the
   J--T decomposition of $\triangle([U,V]\setminus \{U,V\})$.
\end{enumerate}
\end{definition}
\begin{theorem}
\label{T_Trotterdec}
The J--T decomposition $\triangle_1, \ldots, \triangle_N$ of
$\triangle([U,V]\setminus \{U,V\})$ has the following properties:
\begin{itemize}
\item[(i)] each $\triangle_i$ has the same dimension as
   $\triangle([U,V]\setminus \{U,V\})$;
\item[(ii)] each $\triangle_i$  is either isomorphic to the boundary
   complex of a simplex or it is a polyhedral ball;
\item[(iii)] for $i>1$, any face contained in
   $\triangle_i\cap\left(\bigcup_{j<i} \triangle_j\right)$ is also contained
   in $\triangle_k$ for some $k<i$ such that there are permutations
   $\sigma$ and $\pi$ that differ only in a transposition
of adjacent blocks, with
   $\triangle_i=\triangle([U,V]_\sigma\setminus \{U,V\})$ and
   $\triangle_k=\triangle([U,V]_\pi\setminus \{U,V\})$.
\end{itemize}
\end{theorem}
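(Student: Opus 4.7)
For (i), I would simply note that each $\triangle_i$ equals $\triangle([U,V]_\pi\setminus\{U,V\})$ for some permutation $\pi$ refining $(D_1,\ldots,D_m)$, and appeal to Corollary~\ref{cor:samerank} to conclude that $[U,V]_\pi$ has the same rank in $\Bip_\pi(X)$ as $[U,V]$ has in $\Bip(X)$, so the dimensions of the respective open order complexes agree. For (ii), my plan is to invoke Theorem~\ref{T_dist}---each $[U,V]_\pi$ is a distributive lattice---and then cite the result of Provan mentioned in the paragraph preceding the theorem to conclude that $\triangle([U,V]_\pi\setminus\{U,V\})$ is either the boundary complex of a simplex (when $[U,V]_\pi$ is Boolean) or a polyhedral ball.

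The substantive part is (iii), and my plan is to reduce it to Theorem~\ref{T_Trotter}. First I would fix, for each index $t\in\{1,\ldots,N\}$, a canonical representative $\sigma_t$, namely the \emph{first} permutation in the Johnson--Trotter enumeration of permutations refining $(D_1,\ldots,D_m)$ with $\triangle_t=\triangle([U,V]_{\sigma_t}\setminus\{U,V\})$; this is well-defined by step~(2) of Definition~\ref{D_Trotterdec}. Then, given a face $F\in\triangle_i\cap\bigcup_{j<i}\triangle_j$, I would pick some $j<i$ with $F\in\triangle_j$ and observe that $F$ is a chain whose vertices are simultaneously $\sigma_i$- and $\sigma_j$-compatible, with $\sigma_j$ preceding $\sigma_i$ in the Johnson--Trotter enumeration. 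Applying Theorem~\ref{T_Trotter} with $\tau=\sigma_j$ and $\sigma=\sigma_i$ would produce a permutation $\pi$ preceding $\sigma_i$ in the enumeration, differing from $\sigma_i$ only in a transposition of adjacent blocks, and satisfying
\[
\underline U(\sigma_j)\vee \underline U(\sigma_i)\;\ge\;\underline U(\pi)\vee \underline U(\sigma_i)\;\gtrdot\; \underline U(\sigma_i).
\]

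The next step is to show that $F\subseteq\triangle([U,V]_\pi\setminus\{U,V\})$. For a vertex $W$ of $F$, simultaneous compatibility with $\sigma_i$ and $\sigma_j$ yields, through Corollary~\ref{C_simulc}, that $W$ is compatible with the ordered partition $\rho$ defined by $\underline U(\rho)=\underline U(\sigma_j)\vee\underline U(\sigma_i)$. Lemma~\ref{C_uop} then translates the displayed inequality into the statement that the ordered partition $\rho'$ with $\underline U(\rho')=\underline U(\pi)\vee \underline U(\sigma_i)$ refines $\rho$; since compatibility with an ordered partition passes to any refinement of it, $W$ is also compatible with $\rho'$, and one more application of Corollary~\ref{C_simulc} gives that $W$ is compatible with both $\pi$ and $\sigma_i$. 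Thus $F\in\triangle([U,V]_\pi\setminus\{U,V\})$, and after step~(2) of the J--T decomposition this complex equals $\triangle_k$ for some $k$. Since $\pi$ precedes $\sigma_i$ in the Johnson--Trotter order while $\sigma_i$ was chosen as the earliest representative of $\triangle_i$, the two complexes cannot coincide, forcing $k<i$. Setting $\sigma:=\sigma_i$ delivers the required pair of permutations.

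The main obstacle I anticipate is the bookkeeping in the previous paragraph: one must be careful to deploy the ``refines'' direction of Lemma~\ref{C_uop} and the ``compatibility lifts to refinements'' principle in the correct sense, so that the finer partition $\rho'$ (arising from the smaller bipartition $\underline U(\pi)\vee\underline U(\sigma_i)$) inherits compatibility from the coarser $\rho$. Once this translation is unambiguous, part (iii) is essentially a direct corollary of Theorem~\ref{T_Trotter}.
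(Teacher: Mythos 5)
Your proof is correct and follows the paper's own argument closely: parts (i) and (ii) are dispatched via Corollary~\ref{cor:samerank} and Provan's result exactly as the paper does, and for part (iii) you reduce to Theorem~\ref{T_Trotter} via Corollary~\ref{C_simulc} and Lemma~\ref{C_uop} in the same way. The only difference is that you spell out more explicitly the choice of a canonical (earliest) representative $\sigma_t$ for each $\triangle_t$ and the refinement direction in Lemma~\ref{C_uop}, details the paper leaves implicit but which your version correctly handles.
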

\begin{proof}
We only need to show (iii) since, as mentioned above, item~(i) follows from
Corollary~\ref{cor:samerank}, and item~(ii) follows
from Provan's result~\cite{ProvAA,Billera-Hsiao-Provan}. Assume
$\triangle_i=\triangle([U,V]_\sigma\setminus \{U,V\})$  and consider a
face $\gamma$ that is also contained in $\triangle_j=\triangle([U,V]_\tau\setminus
\{U,V\})$ for some $j<i$. This means that the elements of
$\gamma$ are $\sigma$-compatible and $\tau$-compatible bipartitional
relations. By Corollary~\ref{C_simulc}, these bipartitional relations are also
compatible with the ordered bipartition $\rho$ given by
$\underline U(\rho)=\underline U(\tau)\vee\underline U(\sigma)$.
By Theorem~\ref{T_Trotter}, there is a permutation $\pi$ preceding
$\sigma$ in the Johnson--Trotter enumeration of all permutations
$\rho$ with the property
that both $U$ and $V$ are $\rho$-compatible, such that
$\underline U(\tau)\vee \underline U(\sigma)\geq \underline U (\pi)\vee
\underline U (\sigma) \gtrdot \underline U (\sigma)$.
Thus the
elements of the face $\gamma$ are also $\pi$-compatible, and $\gamma$ is
contained in 
$$\triangle([U,V]_\sigma\setminus \{U,V\})\cap
\triangle([U,V]_\pi\setminus \{U,V\}).$$ 
Here $\triangle([U,V]_\pi\setminus
\{U,V\})=\triangle_k$ for some $k<i$ in the J--T decomposition,
in particular, $\triangle_k\neq \triangle_i$.
\end{proof}

\section{The topology of the order complex of
$\Bip(X)\setminus\{\emptyset, X\times X\}$}
\label{s_toc}

In this section we determine the homotopy type of the order complex of
$\Bip(X)\setminus \{\emptyset, X\times X\}$. To achieve this goal, we
construct a listing of all maximal chains contained in $\Bip(X)$ and then
use the results of Babson and Hersh~\cite{Babson-Hersh} as described in
Section~\ref{sec:Morse} to see that there is exactly one critical
cell in this order complex with respect to the induced discrete Morse
matching.

We now describe the announced listing of all maximal chains of
$\Bip(X)$.
Without loss of generality, we may
assume that $X=\{1,2,\ldots,n\}$. The construction involves the
following three steps.

\smallskip
{\sc Step 1.}
By Proposition~\ref{P_c}, for each maximal
chain $c$ in $\Bip(X)$, there is a unique permutation $\sigma$ such
that all elements of $c$ are $\sigma$-compatible. Let us list the
permutations of $X$ using the Johnson--Trotter enumeration and associate to each
permutation $\sigma$ the set of all $\sigma$-compatible maximal
chains, or, equivalently, of {\it all\/} maximal chains in $\Bip_\sigma(X)$. 

\smallskip
{\sc Step 2.}
By Theorem~\ref{T_dist}, for a fixed $\sigma$, the lattice
$\Bip_\sigma(X)$ is distributive. By \cite[Th.~4.5]{BjoeAA},
it has an $EL$-labelling using its join-irreducible elements. In this
$EL$-labelling, an edge $UV$, where $U$ and $V$ are elements of
$\Bip_\sigma(X)$ 
such that $U$ is covered by $V$, is labelled by the unique
join-irreducible element $W\in \Bip_\sigma(X)$ such that $W\subseteq
V$ but $W\not\subseteq U$. We use this $EL$-labelling to 
order the maximal chains of $\Bip_\sigma(X)$ in the following
way. The join-irreducible elements of $\Bip_\sigma(X)$ for
$\sigma=(\{1\},\{2\},\ldots,\{n\})$ are given in
Proposition~\ref{P_jirred}. By permuting the elements of $X$, it is easy to
see that the join-irreducible elements of $\Bip_\sigma(X)$ are the following:
\begin{itemize}
\item[(i)]
   $E(\sigma,i):=(\{\sigma_1,\ldots,\sigma_{i-1}\},\{\sigma_{i},\ldots,\sigma_n\})$
   for $i\in\{2,\ldots,n\}$,
\item[(ii)]
   $F(\sigma,i):=(\{\sigma_1,\ldots,\sigma_{i-1}\},\{\underline{\sigma_i}\},\{\sigma_{i+1},\ldots,\sigma_n\})$
   for $i\in\{1,2,\dots,n\}$,
\item[(iii)]
   $G(\sigma,i):=(\{\sigma_1,\ldots,\sigma_{i-1}\},\{\underline{\sigma_i},\underline{\sigma_{i+1}}\},\{\sigma_{i+2},\ldots,\sigma_n\})$
   for $i\in\{1,2,\dots,n-1\}$.
\end{itemize}
For all what follows in this section, we fix the linear extension
\begin{gather}
\notag
E(\sigma,2)\prec F(\sigma,1)\prec E(\sigma,3)\prec F(\sigma,2)\prec G(\sigma,1)\prec \cdots\\
\notag
\prec E(\sigma,k+2)\prec F(\sigma,k+1)\prec G(\sigma,k)\prec \cdots\\
E(\sigma,n)\prec F(\sigma,n-1)\prec G(\sigma,n-2)\prec F(\sigma,n)\prec G(\sigma,n-1).
\label{E_linext}
\end{gather}
(Here, we use the symbol $\prec$ to distinguish the linear extension
in \eqref{E_linext} from the order relation in $\Bip_\sigma(X)$.) 
Now, as announced above, we associate to each maximal chain 
$c: \emptyset= U_0\lessdot \cdots \lessdot U_{3n-2}=X\times X$
contained in $\Bip_\sigma(X)$
the word $z_1z_2\cdots z_{3n-2}$, where the
letter $z_i$ is the unique join-irreducible element contained in $U_i$ but not
contained in $U_{i-1}$.
We list the maximal chains in $\Bip_\sigma(X)$ according to the
lexicographic order of their associated words.

\smallskip
{\sc Step 3.}
Given a $\sigma$-compatible maximal chain $c$ and a
$\sigma'$-compatible maximal chain $c'$, the chain $c$ precedes $c'$ if
and only if either $\sigma$ precedes $\sigma'$ in the Johnson--Trotter
enumeration, or if $\sigma=\sigma'$ and $c$ precedes $c'$ in the ordering
of the maximal chains in $\Bip_\sigma(X)$ described in Step~2.

\medskip
The list of  chains we thus obtain is in general {\em not} a poset
lexicographic order as defined in \cite{Babson-Hersh}, 
as may be seen in the following example.

\begin{example}\label{e_nonplo}
Consider the cover relations $U(\{1,2,3,4\})\lessdot U(\{1,2\},\{3,4\})$\break
and $U(\{1,2,3,4\})\lessdot U(\{1,4\},\{2,3\})$ in $\Bip(\{1,2,3,4\})$.
Here $U(\{1,2\},\{3,4\})$ is\break 
$(\{1\},\{2\},\{3\},\{4\})$-compatible and
$(\{2\},\{1\},\{3\},\{4\})$-compatible, but not\break
$(\{1\},\{4\},\{2\},\{3\})$-compatible,
whereas $U(\{1,4\},\{2,3\})$ is $(\{1\},\{4\},\{2\},\{3\})$-compat\-ible
but not $(\{1\},\{2\},\{3\},\{4\})$-compatible nor
$(\{2\},\{1\},\{3\},\{4\})$-compatible . In the Johnson--Trotter enumeration of
all permutations of $\{1,2,3,4\}$, the permutations
$(\{1\},\{2\},\{3\},\{4\})$, $(\{1\},\{4\},\{2\},\{3\})$,
$(\{2\},\{1\},\{3\},\{4\})$ follow in this order. It is not true
that every maximal chain extending $U(\{1,2,3,4\})\lessdot
U(\{1,2\},\{3,4\})$ precedes every maximal chain extending
$U(\{1,2,3,4\})\lessdot U(\{1,4\},\{2,3\})$ since any\break
$(\{1\},\{4\},\{2\},\{3\})$-compatible maximal chain precedes any
$(\{2\},\{1\},\{3\},\{4\})$-com\-pat\-ible maximal chain. On the other hand,
it is also not true that every maximal chain extending 
$U(\{1,2,3,4\})\lessdot U(\{1,4\},\{2,3\})$
precedes every maximal chain extending $U(\{1,2,3,4\})\lessdot
U(\{1,2\},\{3,4\})$ since any
$(\{1\},\{2\},\{3\},\{4\})$-compatible maximal chain precedes any
$(\{1\},\{4\},\{2\},\{3\})$-com\-pat\-ible maximal chain.
\end{example}

However, our list of maximal chains still grows by skipped intervals, as
defined in Definition~\ref{D_ecsi}, which we prove in the lemma below.
In the proof of the lemma, and also later, we use the following
well-known property of the $EL$-labelling of distributive lattices
which we recalled in Step~2 above, and which involves the notion of a
{\it descent\/} in a word $z_1z_2\cdots z_{3n-2}$: an index $i$ for which 
$z_i\succ z_{i+1}$ is called a descent. When we list the maximal
chains of $\Bip_\sigma(X)$ in lexicographic order as described
in Step~2, then a subset $\{U_{i_1},\ldots,U_{i_k}\}$ of a maximal
chain $c$ with associated word $z_1z_2\cdots z_{3n-2}$ 
does not belong to any previously listed maximal chain if
and only if the set of their ranks contains all descents of 
$z_1z_2\cdots z_{3n-2}$.

\begin{lemma}
\label{L_skip}
Let $X=\{1,2,\dots,n\}$, and let 
$\sigma=(\{\sigma_1\},\{\sigma_2\},\dots,\{\sigma_n\})$ be a permutation of
$X$. Furthermore, 
let $c$ be a $\sigma$-compatible maximal chain of $\Bip(X)$, and assume
that the word $z_1z_2\cdots z_{3n-2}$ is associated to $c$ in
$\Bip_\sigma(X)$. 
As before, we list the maximal chains in $\Bip_\sigma(X)$ according
to the lexicographic order of their associated words, as described in
Step~2 at the beginning of this section.
Then a chain contained in $c$ is also contained in
an earlier listed maximal chain 
of $\Bip_\sigma(X)$ if and only if the set of its ranks is
disjoint from at least one of the following intervals:
\begin{itemize}
\item[(i)] all singletons $[i,i]=\{i\}$ such that $z_i\succ z_{i+1}$;
\item[(ii)] all intervals $[i,j]$ with $z_i=E(\sigma,q)$,
   $z_{j+1}=G(\sigma,q-1)$, for some $q$, such that the permutation
$\pi$ obtained from $\sigma$ by exchanging 
   the adjacent blocks $\{\sigma_{q-1}\}$ and $\{\sigma_{q}\}$ precedes
   $\sigma$ in the Johnson--Trotter enumeration.
\end{itemize}
\end{lemma}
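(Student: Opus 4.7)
The plan is to separate the contributions of earlier-listed maximal chains into two groups, according to whether the earlier chain has the same associated permutation $\sigma$ as $c$ or not. By Step~3 of the construction at the start of this section, these are exactly the two sources of chains enumerated before $c$: those lex-earlier within $\Bip_\sigma(X)$, and those belonging to $\Bip_\tau(X)$ for some $\tau$ preceding $\sigma$ in the Johnson--Trotter order. Item~(i) of the lemma should account for the first group, and item~(ii) for the second.

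For the first group I would invoke the standard shelling-theoretic fact that, under the lexicographic order of maximal chains induced by an $EL$-labelling of a distributive lattice (here $\Bip_\sigma(X)$, by Theorem~\ref{T_dist} and Step~2), a sub-chain of $c$ fails to appear in any earlier-listed $\sigma$-compatible chain if and only if its set of ranks contains every descent position of $z_1z_2\cdots z_{3n-2}$; the contrapositive of this is exactly item~(i). For the second group I would use Theorem~\ref{T_Trotter}: if the sub-chain is $\sigma$- and $\tau$-compatible, then by Corollary~\ref{C_simulc} its elements are compatible with the ordered partition $\rho$ satisfying $\underline{U}(\rho)=\underline{U}(\sigma)\vee \underline{U}(\tau)$, and Theorem~\ref{T_Trotter} supplies a permutation $\pi$ preceding $\sigma$, obtained from $\sigma$ by swapping two adjacent singleton blocks $\{\sigma_{q-1}\}$ and $\{\sigma_q\}$, with $\underline{U}(\pi)\vee \underline{U}(\sigma)\leq \underline{U}(\rho)$. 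An application of Lemma~\ref{C_uop} then transfers $\rho$-compatibility to $\pi$-compatibility of the sub-chain. Conversely, any $\pi$-compatible sub-chain (with $\pi$ preceding $\sigma$) extends to a maximal chain in $\Bip_\pi(X)$, listed before $c$. Hence membership in an earlier chain of the second group is equivalent to $\pi$-compatibility for some such $\pi$.

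It remains to convert $\pi$-compatibility into a rank condition, for which I would use the code representation of Section~\ref{sec:distr}. A $\sigma$-compatible $U$ is simultaneously $\pi$-compatible (for $\pi$ swapping positions $q-1,q$) if and only if $\sigma_{q-1}$ and $\sigma_q$ lie in a common block of $U$, which by the definition of the code amounts to $u_q=-3$ or $u_{q-1}=3$. Each coordinate of the code is monotone nondecreasing along $c$ and can only jump at its own $E$-, $F$- or $G$-step (each occurring at most once in the word). Consequently the indices $\ell$ at which $\sigma_{q-1}$ and $\sigma_q$ sit in different blocks of $U_\ell$ form exactly the interval $\{i,i+1,\dots,j\}$, where $z_i=E(\sigma,q)$ is the step at which $u_q$ leaves $-3$ and $z_{j+1}=G(\sigma,q-1)$ is the step at which $u_{q-1}$ first reaches $3$. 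A brief check using Theorem~\ref{T_repr} shows that $G(\sigma,q-1)\subseteq U$ forces $u_q\geq 1$, so $E(\sigma,q)$ must precede $G(\sigma,q-1)$ in the word and $i\leq j$, legitimizing the interval. This yields item~(ii). The main obstacle I foresee is to orchestrate the three ingredients---the distributive $EL$-shelling, the Johnson--Trotter comparison of Theorem~\ref{T_Trotter}, and the code arithmetic---so that both directions of the biconditional fall out simultaneously and cleanly, in particular so that the rank-set criterion extracted from the codes matches the existence criterion extracted from Theorem~\ref{T_Trotter}.
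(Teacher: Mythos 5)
Your proposal is correct and follows essentially the same route as the paper's proof: split the earlier-listed chains into those with the same $\sigma$ (handled by the descent-set criterion for the distributive $EL$-shelling, giving item~(i)) and those with an earlier $\tau$ (handled via Theorem~\ref{T_Trotter}, giving item~(ii)). The only cosmetic difference is that the paper invokes Theorem~\ref{T_Trotterdec}(iii) as a packaged consequence of Theorem~\ref{T_Trotter} and Corollary~\ref{C_simulc}, and describes $i$ and $j+1$ directly as the ranks at which $\sigma_{q-1},\sigma_q$ first sit in different (nonunderlined) blocks, respectively first sit in a common underlined block, whereas you re-derive exactly that characterization through the monotone code coordinates of Section~\ref{sec:distr}; both routes yield the same interval $[i,j]$ and the same two-sided argument.
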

\begin{proof}
Consider first the intersection of $c: \emptyset= U_0\lessdot \cdots \lessdot U_{3n-2}=X\times X$ with another $\sigma$-compatible
maximal chain that was listed earlier. As noted in the paragraph
preceding the lemma, a chain
belongs to such an intersection if and only if its set of ranks does not
contain the descent set $\{i\::\: z_i\succ z_{i+1}\}$ of the word
$z_1z_2\cdots z_{3n-2}$ or, equivalently, if it is disjoint from at least one
of the singletons listed in (i).

Consider next the intersection of $c$ with a $\tau$-compatible maximal
chain $c'$, 
where $\tau$ precedes $\sigma$ in the Johnson--Trotter enumeration. This
intersection is contained in $\Bip_\tau(X)\cap \Bip_\sigma(X)$
which, by part (iii) of Theorem~\ref{T_Trotterdec}, is contained in some
$\Bip_\pi(X)$, where $\pi$ is obtained from $\sigma$ by exchanging
   the adjacent blocks $\{\sigma_{q-1}\}$ and $\{\sigma_{q}\}$,
for some $q$, and
$\pi$ precedes
   $\sigma$ in the Johnson--Trotter enumeration. We may extend the
intersection $c\cap c'$ to a $\pi$-compatible maximal chain. Clearly, the
intersection $c\cap c'$ is 
then a chain in $\Bip_\pi(X)\cap \Bip_\sigma(X)$.
Consider now the word  $z_1z_2\cdots z_{3n-2}$ associated to $c$ and define
$i$ and $j$ by $z_i=E(\sigma,q)$ and
  $z_{j+1}=G(\sigma,q-1)$. Equivalently, $i$ is the smallest rank at which
   we find a bipartition $U_i$ that contains $q-1$ and $q$ in different
   ({\it nonunderlined}) blocks, and $j+1$ is the smallest rank at which we
   find a bipartition $U_{j+1}$ containing $q-1$ and $q$ in the same 
{\it underlined\/}
   block. Thus the chain 
\begin{equation} \label{eq:UKette}
U_0\lessdot U_1\lessdot\cdots\lessdot U_{i-1}<U_{j+1}\lessdot
   U_{j+2} \lessdot\cdots \lessdot U_{3n-2} 
\end{equation}
is $\pi$-compatible {\it and\/}
   $\sigma$-compatible, whereas any $U_k$ of rank $k\in [i,j]$ is only
   $\sigma$-compatible. The intersection $c\cap c'$ 
must be contained in \eqref{eq:UKette},
whence its set of ranks must be disjoint from $[i,j]$. Therefore, the
intersection $c\cap c'$ satisfies condition~(ii) with the above $\pi$.

Conversely, let $\gamma$ be a subchain of $c$ such that the set of its
ranks avoids an interval $[i,j]$, where the interval is one of the
intervals described in item (ii). Then $\gamma$ is a subchain of 
$$U_0\lessdot U_1\lessdot\dots\lessdot
U_{i-1}<U_{j+1}\lessdot U_{j+2}\lessdot\dots\lessdot U_{3n-2}.$$ 
This chain may be extended
   to the $\pi$-compatible maximal chain obtained from $c$ 
by replacing each $U_k$, $i\le k\le j$,
by the bipartitional relation $U_k'$ obtained from $U_k$ by
   swapping the elements $\sigma_{q-1}$ and $\sigma_q$. Therefore $\gamma$
is contained in the intersection of $c$ with  a
   $\pi$-compatible maximal chain, where $\pi$ precedes $\sigma$.
\end{proof}

\begin{theorem}
\label{T_fullc}
The order complex 
\begin{equation} \label{eq:ordercomplex}
\triangle(\Bip(X)\setminus
\{\emptyset,X\times X\}) 
\end{equation}
is homotopy equivalent to a sphere of dimension $\vert X\vert-2$.
\end{theorem}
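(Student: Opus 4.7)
The plan is to apply the Babson--Hersh machinery (Theorem~\ref{T_BH1}) together with Forman's theorem (Theorem~\ref{thm:Forman}) to the enumeration of maximal chains of $\Bip(X)$ constructed in Steps~1--3 at the beginning of this section. By Lemma~\ref{L_skip}, this enumeration grows by creating skipped intervals in the sense of Definition~\ref{D_ecsi}. The strategy is to identify the unique maximal chain that contributes a critical cell under the resulting discrete Morse matching and to show that this cell has dimension exactly $n-2$; the conclusion $\triangle(\Bip(X)\setminus\{\emptyset,X\times X\})\simeq S^{n-2}$ then follows from Forman's theorem.

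Let $\sigma^{*}$ denote the last permutation in the Johnson--Trotter enumeration of all permutations of $X=\{1,2,\dots,n\}$, and let $c^{*}$ be the $\sigma^{*}$-compatible maximal chain whose associated word is the $\prec$-increasing linear extension~\eqref{E_linext} of the join-irreducibles. I claim that $c^{*}$ is the only chain in the enumeration contributing a critical cell, and that the dimension of this cell is $n-2$. To verify the contribution, note first that the word is $\prec$-increasing, so Lemma~\ref{L_skip}(i) contributes no singleton to $I(c^{*})$. Next, because $\sigma^{*}$ is the last permutation in the Johnson--Trotter enumeration, every transposition of two adjacent blocks of $\sigma^{*}$ produces an earlier permutation; consequently, Lemma~\ref{L_skip}(ii) contributes a type~(ii) interval for every $q\in\{2,\dots,n\}$. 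A direct calculation using the positions of $E(\sigma^{*},q)$ and $G(\sigma^{*},q-1)$ in~\eqref{E_linext} shows that the resulting family consists of $n-1$ pairwise incomparable intervals which jointly cover $\{1,\dots,3n-3\}$, and that the algorithm of Definition~\ref{D_jint} collapses them into $n-1$ pairwise disjoint $J$-intervals whose union is still $\{1,\dots,3n-3\}$. By Theorem~\ref{T_BH1}, $c^{*}$ therefore contributes a critical cell of dimension $(n-1)-1=n-2$.

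The second main step is to show that no other chain in the enumeration contributes a critical cell. For any chain $c$ whose associated permutation $\sigma\ne\sigma^{*}$, at least one $q\in\{2,\dots,n\}$ has the property that swapping $\{\sigma_{q-1}\}$ and $\{\sigma_{q}\}$ yields a \emph{later} permutation; Lemma~\ref{L_skip}(ii) then contributes no type~(ii) interval for that $q$, and the positions of the word strictly between those of $E(\sigma,q)$ and $G(\sigma,q-1)$ cannot be covered by the remaining intervals listed in Lemma~\ref{L_skip}. For a chain $c$ compatible with $\sigma^{*}$ but whose word is not the $\prec$-increasing extension, every descent $z_{i}\succ z_{i+1}$ produces a type~(i) singleton $\{i\}$ which is contained in some type~(ii) interval; under the minimal antichain requirement of Definition~\ref{D_ecsi}, the containing type~(ii) interval is removed from $I(c)$, and tracking these removals carefully shows that the union of $J(c)$ falls short of $\{1,\dots,3n-3\}$, whence Theorem~\ref{T_BH1} rules out a critical cell.

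Combining these two steps with Forman's Theorem~\ref{thm:Forman}, the order complex $\triangle(\Bip(X)\setminus\{\emptyset,X\times X\})$ is homotopy equivalent to a CW complex whose only critical cell has dimension $n-2$, giving the homotopy type of the sphere $S^{n-2}$. The hardest part of this plan will be the uniqueness argument of the previous paragraph: the interplay between descents in the word of a chain, the type~(ii) intervals arising from earlier permutations in the Johnson--Trotter enumeration, and the minimal antichain requirement of Definition~\ref{D_ecsi} is intricate, and a careful case analysis is required to rule out every chain other than $c^{*}$.
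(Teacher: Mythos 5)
Your overall strategy—applying Lemma~\ref{L_skip}, Theorem~\ref{T_BH1} and Theorem~\ref{thm:Forman}, and identifying the critical chain as the lexicographically first $\hat\tau$-compatible chain where $\hat\tau$ is the last permutation in the Johnson--Trotter order—is exactly the paper's strategy, and your computation of $I(c^{*})$ and $J(c^{*})$ for that chain (yielding $n-1$ pairwise disjoint $J$-intervals covering $\{1,\dots,3n-3\}$ and hence a critical cell of dimension $n-2$) is correct. The gap is in the uniqueness argument, which you yourself flag as the hard part but which is not a deferrable technicality: it is the bulk of the proof.

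Your argument for $\sigma\ne\sigma^{*}$ does not go through as stated. You pick one $q$ for which swapping $\{\sigma_{q-1}\}$ and $\{\sigma_q\}$ yields a \emph{later} permutation, observe that the type~(ii) interval for that $q$ is absent, and assert that the ranks ``strictly between those of $E(\sigma,q)$ and $G(\sigma,q-1)$ cannot be covered by the remaining intervals.'' This is not justified: those positions could contain descents (hence be covered by type~(i) singletons) or be covered by type~(ii) intervals for other indices $q'$. One cannot reason about a single $q$ in isolation. Likewise, for $\sigma=\sigma^{*}$ with a word containing a descent, the statement that ``tracking these removals carefully shows that the union of $J(c)$ falls short'' is not a proof. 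What the paper actually does is an induction on $k$ establishing three interlocking properties of the word $z_1\cdots z_{3n-2}$ and the permutation $\sigma$ \emph{simultaneously}: (a) the letters in $\bigcup_{j=2}^{k}[E(\sigma,j),G(\sigma,j-1)]$ appear in increasing $\prec$-order; (b) for all $j\le k$ the swap of $\{\sigma_{j-1}\}$ and $\{\sigma_j\}$ gives an earlier permutation; and (c) each substring $E(\sigma,j)\cdots G(\sigma,j-1)$, $j\le k$, is descent-free. The engine of the induction is a forced-ascent argument: a specific ascent must occur in the substring $G(\sigma,k-1)\cdots G(\sigma,k)$ (or $E(\sigma,2)\cdots G(\sigma,1)$ when $k=1$), it cannot be covered by a singleton, so it must be covered by a \emph{minimal} type~(ii) interval $[i,j]$, and a delicate position analysis—using criterion (a) and the leftmost-ascent choice—forces this interval to be the one associated with $q=k+1$. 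Running this to $k=n$ pins down $\sigma$ and the word completely. Your case split ($\sigma\ne\sigma^{*}$ versus $\sigma=\sigma^{*}$ with descents) is not how this works; the two possibilities are eliminated together by the same induction, and attempting to treat them separately as you propose will not easily succeed.
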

\begin{proof}
As before, without loss of generality, we may assume that $X=\{1,2,\dots,n\}$.
Lemma~\ref{L_skip} says that the enumeration of maximal faces of
the order complex \eqref{eq:ordercomplex} described in
Steps~1--3 at the beginning of this section grows by creating skipped
intervals. Consequently,
by Theorem~\ref{T_BH1}, in the Morse matching 
constructed by Babson and Hersh~\cite{Babson-Hersh} from such an
enumeration, at
most one critical cell is contributed per maximal chain, namely 
exactly when
the set of intervals $I(c)$ (defined in Definition~\ref{D_ecsi}) 
covers all elements of the set of ranks
$\{1,\ldots, 3n-2\}$.

We are going to show that there is exactly one maximal chain in our
enumeration that contributes a critical cell. This
maximal chain is the lexicographically first chain among the maximal
chains that are compatible with the last permutation in the
Johnson--Trotter enumeration, namely
\begin{equation} \label{eq:lastperm} 
\hat\tau:=(\{2\},\{1\},\{3\},\{4\},\dots,\{n\}).
\end{equation}
According to our construction,
the lexicographically first chain which is compatible with this
permutation is 
\begin{equation} \label{eq:lexfirst} 
\emptyset\prec H_1\prec H_1\vee H_2\prec\dots\prec H_1\vee
H_2\vee\dots\vee H_{3n-2},
\end{equation}
where $H_1,H_2,\dots,H_{3n-2}$ is the enumeration of the
join-irreducible elements in \eqref{E_linext}, with $\sigma$ 
replaced by $\hat\tau$. 
Subsequently, we will compute the dimension of the critical cell
contributed by this maximal chain, using 
the last part of Theorem~\ref{T_BH1}.
The proof will be completed by taking recourse to
Theorem~\ref{thm:Forman}. 

Consider now a $\sigma$-compatible chain $c$, whose associated word 
is $z_1z_2\cdots z_{3n-2}$ (compare Step~2 at the beginning of this
section), and assume
that it contributes a critical cell. Our first goal is to determine
the set of intervals $I(c)$. According to Lemma~\ref{L_skip},
it consists of those intervals listed in items~(i) and (ii) in this lemma
that are minimal with respect
to inclusion. Clearly, all singletons listed in item~(i) of
Lemma~\ref{L_skip} belong to
$I(c)$. Because of the property of $EL$-labellings of distributive
lattices that we recalled in the paragraph before Lemma~\ref{L_skip},
this implies in particular that any other interval $[i,j]$,
$i<j$, can only be minimal with respect to inclusion if the substring
$z_iz_{i+1}\cdots z_{j+1}$ of 
$z_1z_2\cdots z_{3n-2}$ contains no descent. It is then not difficult
to see from the choice of the linear extension \eqref{E_linext} 
of the subposet of join-irreducible
elements (cf.\ Figure~\ref{fig:bipji}) that 
an interval $[i,j]$ listed in item~(ii) of Lemma~\ref{L_skip} 
belongs to $I(c)$ if {\it and only if\/} 
we have
   $z_i\prec z_{i+1}\prec \cdots\prec z_{j+1}$, there is a $q$ such that
$z_i=E(\sigma,q)$ and $z_{j+1}=G(\sigma,q-1)$, 
and the permutation $\pi$ obtained from $\sigma$ by
exchanging the adjacent blocks $\{\sigma_{q-1}\}$ and $\{\sigma_{q}\}$ precedes
   $\sigma$ in the Johnson--Trotter enumeration.

The question remains, how exactly the join-irreducible elements
in \eqref{E_linext} can be aligned in a word
$z_1z_2\cdots z_{3n-2}$ such that the above described minimal
intervals in $I(c)$ cover all of $[1,3n-3]$, and what
properties the permutation $\sigma$ must have.
(While reading the subsequent paragraphs, the reader is advised to
keep Figure~\ref{fig:bipji} in mind.)

In order to answer the above question, we claim that,
if the above described minimal intervals in $I(c)$ cover all of
$\{1,2,\dots,n\}$, 
for all $k=1,2,\dots,n$ we have the following three properties:
\begin{itemize}
\item[(a)] The letters appearing in the union of intervals
\begin{equation}
\label{eq:bigu}
\bigcup_{j=2}^k [E(\sigma,j),G(\sigma,j-1)]
\end{equation}
{\it in the poset of join-irreducible
elements of\/}
$\Bip_\sigma(\{1,2,\dots,n\})$ 
appear in increasing order (with respect to
the linear order described in \eqref{E_linext}) in
$z_1z_2\cdots z_{3n-2}$ (from left to right).
\item[(b)] For
$j=2,\ldots,k$, the permutation obtained from $\sigma$ by exchanging
the blocks $\{\sigma_{j-1}\}$ and $\{\sigma_{j}\}$ precedes $\sigma$
in the Johnson--Trotter enumeration.
\item[(c)] For $j=2,\ldots,k$, the substring $E(\sigma,j)\cdots
   G(\sigma,j-1)$ of $z_1z_2\cdots z_{3n-2}$ contains no descents.
\end{itemize}

We prove these assertions by induction on $k$.
All three statements are vacuously true for $k=1$. 
Assume now that the statements are true for some $k\in\{1,2,\dots,n-1\}$.
We will show that they also hold if we increase $k$ to $k+1$.


Let us for the moment suppose that $k=1$. 
Since $E(\sigma,2)< G(\sigma,1)$  holds in $\Bip(\{1,2,\dots,n\})$, 
the letter $E(\sigma,2)$ must appear before the letter $G(\sigma,1)$
in the word $z_1z_2\cdots z_{3n-2}$, and the
substring $E(\sigma,2)\cdots G(\sigma,1)$  contains an {\it ascent},
i.e., a letter followed by a larger letter. Let $z_l\prec z_{l+1}$ be
the leftmost such ascent.

On the other hand, if $k\ge2$, then, 
by (c), we know that the substring $E(\sigma,k)\cdots\break
G(\sigma,k-1)$ contains no descent. Moreover, since
$E(\sigma,k)<G(\sigma,k)$ in $\Bip(\{1,2,\dots,n\})$, the letter
$G(\sigma,k)$ must appear after the letter $E(\sigma,k)$. From these
facts taken
together, we infer that $G(\sigma,k)$ must appear after
$G(\sigma,k-1)$, and the substring 
$G(\sigma,k-1)\cdots G(\sigma,k)$ contains an ascent.
Let $z_l\prec z_{l+1}$ be
the leftmost such ascent.

To summarize both cases, in $z_1z_2\cdots z_{3n-2}$ we find
\begin{equation} \label{eq:sub1} 
E(\sigma,k+1)\cdots G(\sigma,k-1)\cdots z_lz_{l+1}\cdots G(\sigma,k),
\end{equation}
where $z_l\prec z_{l+1}$ marks the ascent that we identified in both
cases, and where $G(\sigma,k-1)$ is not present if $k=1$. It is allowed
that $z_l=G(\sigma,k-1)$ (or even that $z_l=E(\sigma,2)$ in the case that
$k=1$) or $z_{l+1}=G(\sigma,k)$.

The position $l$ of the ascent $z_l\prec z_{l+1}$ cannot be
covered by a singleton listed under item~(i) in Lemma~\ref{L_skip}, thus
it must be covered by a minimal interval $[i,j]$ listed under item~(ii) in
Lemma~\ref{L_skip}. In particular, $i\le l$.

Assume, by way of contradiction, that the interval $[i,j]$
is not associated to $E(\sigma,k+1)$ and $G(\sigma,
k)$.  Then we must have $z_i=E(\sigma,q)$ and $z_{j+1}=G(\sigma, q-1)$
for some $q\neq k+1$. We claim that $q>k+1$. This is immediate
when $k=1$. For $k\geq 2$, the inequality $q>k+1$ follows from the fact
that criterion (a) does not allow any letter $G(\sigma,q-1)$ satisfying 
$q\leq k$ to appear after $G(\sigma,k-1)$
(recall \eqref{eq:sub1}). 

Let us compare now the
position of the letter $E(\sigma,q)$ with the position of $E(\sigma, 2)$
if $k=1$, or with the position of the letter $G(\sigma,k-1)$ if $k\geq
2$. The letter $E(\sigma,q)$ cannot appear before $E(\sigma,2)$ when
$k=1$, nor can it appear before $G(\sigma,k-1)$ when $k\geq 2$, because
$E(\sigma,q)\succ E(\sigma,2)$, respectively $E(\sigma,q)\succ
G(\sigma,k-1)$, would
force a descent in the substring $E(\sigma,q)\cdots G(\sigma,q-1)$,
making $[i,j]$ nonminimal. However, $E(\sigma,q)$ cannot
appear after $E(\sigma,2)$ when
$k=1$, nor can it appear after $G(\sigma,k-1)$ when $k\geq 2$.
For, if this is the case, then we have an ascent in the substring in 
$E(\sigma,2)\cdots E(\sigma,q)$, respectively 
in the substring $G(\sigma,k-1)\cdots
E(\sigma,q)$, and thus an ascent 
that occurs before $E(\sigma,q)=z_i$, 
a contradiction with our choice that 
$z_l\prec z_{l+1}$ was the leftmost ascent 
in the substring $E(\sigma,2)\cdots
G(\sigma,1)$, respectively in $G(\sigma,k-1)\cdots G(\sigma,k)$
(recall that $i\le l$).

Thus we must have $z_i=E(\sigma,k+1)$ and $z_{j+1}=G(\sigma,
k)$, and the permutation obtained by exchanging the blocks
$\{\sigma_k\}$ and $\{\sigma_{k+1}\}$ must precede $\sigma$ in the
Johnson--Trotter enumeration. This proves that (b) remains valid when we
increase $k$ to $k+1$. Furthermore, the substring
$E(\sigma,k+1)\cdots G(\sigma,k)$ must not contain any descents, proving
that (c) remains valid when we increase $k$ to $k+1$. 
As a consequence, the elements in $[E(\sigma,k+1),G(\sigma,k)]$
in the poset of join-irreducible elements of 
$\Bip_\sigma(\{1,2,\dots,n\})$ 
appear in increasing order in $z_1z_2\dots z_{3n-2}$.
The letter $E(\sigma, k+1)$ must appear before $G(\sigma,k-1)$  since 
$E(\sigma, k+1)<G(\sigma,k-1)$ holds in $\Bip(\{1,2,\dots,n\})$.
This implies that the substrings $E(\sigma,k)\cdots G(\sigma,k-1)$ and
$E(\sigma,k+1)\cdots G(\sigma,k)$ overlap, with $G(\sigma,k)$
following after $G(\sigma,k-1)$. In addition, both substrings
contain no descents. Therefore (a) remains
also valid when we increase $k$ to $k+1$.
This concludes the proof of the properties (a)--(c). 

For $k=n$, statement (b) forces $\sigma$ to be the last permutation in
the Johnson--Trotter enumeration.
Statement (a) implies that $z_1z_2\cdots z_{3n-2}$ must not contain
any descents, making $c$ the first maximal chain in 
$\Bip_\sigma(\{1,2,\dots,n\})$. (Note that, for $k=n$, the union
of intervals \eqref{eq:bigu} contains all $3n-2$ join irreducible
elements of $\Bip_\sigma(\{1,2,\dots,n\})$.) 

Thus far we have shown that the only maximal chain $c$ that {\em may} 
contribute a critical cell is the lexicographically first
$\hat\tau$-compatible chain (that is, the chain \eqref{eq:lexfirst},
with $\hat\tau$ being given in \eqref{eq:lastperm}),
where $\hat\tau$ is the last permutation 
in the Johnson--Trotter enumeration, given in \eqref{eq:lastperm}. 
We show that this chain $c$ contributes a critical cell, by computing
explicitly the
minimal intervals $I(c)$, and then we find the dimension 
of the critical cell by calculating the  simplified system $J(c)$,
as given in Definition~\ref{D_jint}, and by applying
the last part of Theorem~\ref{T_BH1}. 

Since the word $z_1z_2\cdots z_{3n-2}$ 
associated to $c$ contains no descents, it must be the string of
join-irreducible elements listed in the order described in
\eqref{E_linext}, and all intervals $[i,j]$, where $z_i=E(\sigma,k)$ and
$z_{j+1}=G(\sigma,k-1)$, with $k=2,3,\dots,n$, belong to $I(c)$.
Using the list \eqref{E_linext}, we obtain that 
\begin{equation}
\label{eq:ic}
I(c)=
\begin{cases}
\{[1,3]\} & \mbox{if $n=2$;}\\
\{[1,4]\}\cup \bigcup_{k=1}^{n-3} \{[3k,3k+4]\}\cup
\{[3n-6,3n-3]\} & \mbox{if $n\geq 3$.}\\
\end{cases}
\end{equation}

For any $n\geq 2$, the union of intervals contained
in $I(c)$ is $\{1,2,\dots,3n-3\}$, therefore $c$ does contribute a
critical cell by Theorem~\ref{T_BH1}. 
We are left to find the family of intervals $J(c)$, using
Definition~\ref{D_jint} and \eqref{eq:ic} above. It is easy to see
that $J(c)$ is given by 
\begin{equation}
\label{eq:jc}
J(c)=
\begin{cases}
\{[1,3]\} & \mbox{if $n=2$;}\\
\{[1,4]\}\cup \bigcup_{k=1}^{n-3} \{[3k+2,3k+4]\}\cup
\{[3n-4,3n-3]\} & \mbox{if $n\geq 3$.}\\
\end{cases}
\end{equation}

Since $|J(c)|=n-1$, the dimension of the only critical cell is $n-2$ by
the last part of Theorem~\ref{T_BH1}. The theorem now follows upon invoking
Theorem~\ref{thm:Forman}. 
\end{proof}

The above theorem implies in particular that 
$\triangle(\Bip(X)\setminus\{\emptyset,X\times X\}) $ 
has a non-vanishing homology below the top dimension.
This has the following immediate consequence.

\begin{corollary} \label{cor:nshell}
The order complex 
\begin{equation*} 
\triangle(\Bip(X)\setminus
\{\emptyset,X\times X\}) 
\end{equation*}
is not Cohen--Macaulay.
\end{corollary}

If we combine Theorem~\ref{T_fullc} with Philip Hall's 
theorem~\cite[Prop.~3.8.6]{Stanley-EC1} stating that the M\"obius 
function of a graded poset $P$ with minimum element $\widehat{0}$ and
maximum element $\widehat{1}$ is the reduced Euler characteristic of 
the order complex $\triangle(P\setminus
\{\widehat{0},\widehat{1}\})$, then we obtain the following immediate
corollary.

\begin{corollary}
\label{C_MobBip}
The M\"obius function of the minimum and maximal element in $\Bip(X)$ 
is given by
$$
\mu(\emptyset,X\times X)=(-1)^{\vert X\vert}.
$$
\end{corollary}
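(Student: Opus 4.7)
The plan is to apply Philip Hall's theorem to the poset $P = \Bip(X)$, which identifies the M\"obius function $\mu(\widehat 0, \widehat 1)$ of a graded poset with the reduced Euler characteristic of the order complex of $P \setminus \{\widehat 0, \widehat 1\}$. In our setting, $\widehat 0 = \emptyset$ and $\widehat 1 = X \times X$, and $P \setminus \{\widehat 0, \widehat 1\} = \Bip(X) \setminus \{\emptyset, X \times X\}$.

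Next, I would invoke Theorem~\ref{T_fullc}, which we just proved: the order complex $\triangle(\Bip(X) \setminus \{\emptyset, X \times X\})$ is homotopy equivalent to a sphere of dimension $|X|-2$. Since reduced Euler characteristic is a homotopy invariant, this complex has the same reduced Euler characteristic as $S^{|X|-2}$, namely $(-1)^{|X|-2}$.

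Putting the pieces together, $\mu(\emptyset, X \times X) = (-1)^{|X|-2} = (-1)^{|X|}$, which is the claimed formula. There is no real obstacle here: everything reduces to citing Hall's theorem and Theorem~\ref{T_fullc}, together with the elementary computation of $\tilde\chi(S^{n-2})$. The genuine work was already carried out in the proof of Theorem~\ref{T_fullc}; this corollary is essentially a translation of that topological statement into the language of M\"obius functions.
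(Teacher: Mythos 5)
Your proposal is correct and is precisely the paper's own argument: combine Philip Hall's theorem (M\"obius function equals reduced Euler characteristic of the order complex) with Theorem~\ref{T_fullc}, which gives the homotopy type $S^{|X|-2}$, and then compute $\tilde\chi(S^{|X|-2})=(-1)^{|X|-2}=(-1)^{|X|}$. Nothing to add.
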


\section{Regular and irregular intervals in $\Bip(X)$}
\label{sec:int}

In this concluding section, we handle proper intervals of
$\Bip(X)$. 
We distinguish between two kinds of
intervals, regular and irregular ones (see
Definition~\ref{def:regular}.) As we show in
Proposition~\ref{P_regprod}, regular intervals are isomorphic to the
direct product of Boolean lattices and smaller bipartition
lattices. Since the M\"obius function of Boolean lattices is
well-known, and since we computed the M\"obius function of bipartition
lattices in Corollary~\ref{C_MobBip}, it is then easy to compute
the M\"obius function of regular intervals, see
Corollary~\ref{C_Mobprod}. For irregular intervals we prove that their
order complexes are always contractible, see
Theorem~\ref{T_irreg}. Hence, the M\"obius function of an irregular
interval vanishes. We must leave 
the question of the topological structure of
regular intervals open.

\begin{definition} \label{def:regular}
We say that an interval $[U,V]\subseteq \Bip(X)$ 
is {\em regular} if every $x$
satisfying $(x,x)\in V\setminus U$ also satisfies
$\{y\in X\::\: x\inc{U} y\}=\{y\in X\::\: x\inc{V} y\}$. Otherwise we
call $[U,V]$ {\em irregular}.
\end{definition}

In other words, an interval $[U,V]$ is regular if and only
if, for every $x$ belonging to a nonunderlined block in $U$ and to
an underlined block in $V$, the block containing $x$ in $U$ is
equal to the block containing $x$ in $V$ in the ordered bipartition
representations of $U$ and $V$, respectively.
We remark that all of $\Bip(X)$, that is, the interval
$[\emptyset,X\times X]$, is a regular interval. Indeed, assuming
without loss of generality that
$X=\{1,2,\dots,n\}$, we have 
$$[\emptyset,X\times X]=
[U(\{1,2,\dots,n\}),U(\{\underline1,\underline2,\dots,\underline n\})].
$$

\begin{proposition}
\label{P_regprod}
Every regular interval $[U,V]\subseteq \Bip(X)$ 
is isomorphic to a direct product of Boolean
lattices and lattices of the form $\Bip(B)$, where each $B$ is a block
in the ordered bipartition representation of $U$ and of $V$ such that
$B$ is nonunderlined in $U$ and underlined in $B$.
\end{proposition}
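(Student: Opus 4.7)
The plan is to decompose $X$ into ``chunks'' based on how the block structures of $U$ and $V$ interact, and then to show that $[U,V]$ factors as a direct product of interval lattices on these chunks. Write $U=U(B_1^{\eps_1},\ldots,B_k^{\eps_k})$ and $V=U(C_1^{\eta_1},\ldots,C_l^{\eta_l})$. Using Proposition~\ref{P_cont} together with the regularity hypothesis, I would first classify each block $B_i$ into one of three types: \emph{type A} if $B_i$ is underlined (and hence contained in some underlined $C_j$); \emph{type B} if $B_i$ is nonunderlined and every element of $B_i$ lies in a nonunderlined block of $V$ (so $B_i$ is a union of consecutive nonunderlined $C_j$'s); and \emph{type C} if $B_i$ is nonunderlined and some element of $B_i$ lies in an underlined $C_j$ of $V$, in which case regularity forces $B_i=C_j$. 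This yields a partition $X=X_1\sqcup\cdots\sqcup X_s$ into chunks, where each chunk is either a single type~B or type~C block, or a maximal union of consecutive type~A blocks sharing a common underlined $C_j$.

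The next step is to establish the product structure. For any $W\in[U,V]$, set $W_p=W\cap(X_p\times X_p)$. By construction, two elements from different chunks lie in different $U$-blocks \emph{and} different $V$-blocks; using the common refining permutation supplied by Proposition~\ref{P_cont}(i), it follows that $(x,y)\in U \iff (x,y)\in V$ whenever $x,y$ belong to different chunks. Hence any $W$ sandwiched between $U$ and $V$ must inherit its cross-chunk values verbatim from $U$, and so $W$ is recoverable from the tuple $(W_1,\ldots,W_s)$. Conversely, given any choice $W_p\in[\rest{U}{X_p},\rest{V}{X_p}]$, I would verify that gluing the $W_p$ together with the cross-chunk relation inherited from $U$ yields a bipartitional relation in $[U,V]$: transitivity of $W$ and of $W^c$ reduces to the bipartitional property inside each chunk together with a short case check on triples whose witnesses are spread over two or three chunks. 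This delivers the isomorphism $[U,V]\cong\prod_p[\rest{U}{X_p},\rest{V}{X_p}]$.

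Finally, I would identify each factor. A type~B chunk in which $B_i$ splits into $r$ nonunderlined pieces contributes a Boolean lattice of rank $r-1$, parameterized by the subset of the $r-1$ cut points between consecutive $C_j$'s that are active in $W$. A type~A chunk in which $m$ underlined $B_{i_t}$'s merge into one underlined $C_j$ contributes a Boolean lattice of rank $m-1$, parameterized by the subset of consecutive merges that are performed. A type~C chunk has $\rest{U}{X_p}=\emptyset$ and $\rest{V}{X_p}=X_p\times X_p$, so $[\rest{U}{X_p},\rest{V}{X_p}]=\Bip(X_p)$.

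The main obstacle is the gluing step: verifying that arbitrary independent choices of the $W_p$ reassemble to a bona fide bipartitional relation, since a priori one must rule out transitivity violations whose witness triples $(x,y,z)$ are spread across several chunks. The classification step is also where regularity is essential; without it, a nonunderlined $B_i$ could straddle both underlined and nonunderlined $C_j$'s, destroying the clean product decomposition.
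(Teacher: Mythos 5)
Your proof is correct and follows essentially the same approach as the paper: both decompose $X$ into consecutive chunks determined by how the blocks of $U$ and $V$ interact (your types A, B, C correspond exactly to the paper's distinction between underlined $V$-blocks that are unions of underlined $U$-blocks, nonunderlined $U$-blocks that are unions of nonunderlined $V$-blocks, and ``mixed'' blocks $B=C$), show that $[U,V]$ factors as the direct product of the restricted intervals, and identify the factors as Boolean lattices or copies of $\Bip(B)$. The only cosmetic difference is that the paper peels off one mixed block at a time by induction on the total number of blocks, whereas you produce the full chunk decomposition in one pass; the end decomposition and the gluing argument are the same.
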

\begin{proof}
We prove the statement by induction on the total number of blocks in the
ordered bipartition representations of $U$ and $V$. 

Assume first that there is no $x\in X$ such that 
$x$ is contained in an underlined block of $V$ 
and in a nonunderlined block of $U$. 
By Proposition~\ref{P_cont}, every nonunderlined block of $V$ is
contained in some nonunderlined block of $U$, and every underlined block of
$U$ is contained in some underlined block of $V$. Thus, by our
assumption, $X$ may be
uniquely written as a disjoint union $X=X_1\cup X_2\cup \cdots \cup X_m$
where each $X_i$ is either an underlined block of $V$ (which is also the
union of some consecutive underlined blocks of $U$) or a nonunderlined block
of $U$ (which is also the union of some consecutive nonunderlined blocks
of $V$). Moreover, since every $X_i$ is either a block 
of $U$, or the union of consecutive blocks of $U$, we may order the
blocks $X_i$ in such a way that, for all $i<j$, the relation
$x<_U y$ holds for all 
$x\in X_i$ and $y\in X_j$. Since $U\subseteq V$, it is easy to see 
that, for all $i<j$, $x\in X_i$ and $y\in X_j$, we also have $x<_V y$.
A relation $W\subseteq X\times X$ is bipartitional and belongs to the
interval $[U,V]$ if and only if, for each $i$, the restriction
$\rest{W}{X_i}$ is bipartitional, belongs to
$[\rest{U}{X_i},\rest{V}{X_i}]$, and we have $x<_Wy$
for all $x\in X_i$, $y\in X_j$ satisfying $i<j$. Thus the interval
$[U,V]$ is isomorphic to the direct product $\prod_{i=1}^m
[\rest{U}{X_i},\rest{V}{X_i}]$. 

Given an interval $[\rest{U}{X_i},\rest{V}{X_i}]$, there are two
possibilities: either 
$\rest{U}{X_i}$ has a single nonunderlined block, or
$\rest{V}{X_i}$ has a single underlined block. It is then easily seen
by compressing the (nonunderlined) blocks of $\rest{V}{X_i}$,
respectively the (underlined) blocks of $\rest{U}{X_i}$, to singleton
blocks, that each interval $[\rest{U}{X_i},\rest{V}{X_i}]$ is isomorphic
to a Boolean lattice.

Assume finally that there is at least one $x\in X$ such that $(x,x)\in
V\setminus U$. Since $[U,V]$ is regular, the nonunderlined
block $Y_1$ 
(say) of $U$ containing $x$ is an underlined block of $V$. Let
$Y_0$ ($Y_2$) be the (possibly empty) union of all blocks listed before
(after) $Y_1$ in the ordered bipartition representation of $U$.
Since $Y_1$ is also a block of $V$, and since $U\subseteq V$, it is easy to
see that $Y_0$ ($Y_2$) is also the union of all blocks listed before
(after) $Y_1$ in the ordered bipartition representation of $V$. A relation 
$W\subset X\times X$ is bipartitional and belongs to $[U,V]$ if and only
if for each $i\in\{0,1,2\}$ the restriction $\rest{W}{Y_i}$ is 
bipartitional, belongs to
$[\rest{U}{Y_i},\rest{V}{Y_i}]$ and, for all $i<j$, $x_i\in Y_i$ and
$x_j\in Y_j$ implies $x_i<_Wx_j$. (Here we assume that
the restriction of any relation to the empty set is the empty set, hence 
$Y_i=\emptyset$ implies $|[\rest{U}{Y_i},\rest{V}{Y_i}]|=1$.) 
Thus the interval
$[U,V]$ is isomorphic to $\prod_{i=0}^2 [\rest{U}{Y_i},\rest{V}{Y_i}]$.
We may apply the induction hypothesis to the intervals
$[\rest{U}{Y_0},\rest{V}{Y_0}]$ and $[\rest{U}{Y_2},\rest{V}{Y_2}]$,
whereas the interval $[\rest{U}{Y_1},\rest{V}{Y_1}]$ is isomorphic to
$\Bip(Y_1)$.  
\end{proof}

From the previous proposition, we obtain the following immediate
corollary for the M\"obius function of regular intervals.

\begin{corollary}
\label{C_Mobprod}
Let $[U,V]$ be a regular interval in $\Bip(X)$. Then we have
$$\mu(U,V)=(-1)^{\rank(V)-\rank(U)}.$$
\end{corollary}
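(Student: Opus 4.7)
The plan is to combine Proposition~\ref{P_regprod} with the standard product formula for the M\"obius function (see \cite[Prop.~3.8.2]{Stanley-EC1}), with the already established value of $\mu$ on $\Bip(X)$ (Corollary~\ref{C_MobBip}), and with the well-known value of $\mu$ on a Boolean lattice. Since the rank function is additive on direct products, the corollary will then reduce to a parity check.

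More precisely, I would argue as follows. By Proposition~\ref{P_regprod}, the regular interval $[U,V]$ is isomorphic to a direct product
$$
[U,V]\;\cong\;\prod_{i=1}^r \mathcal{B}_{n_i}\times \prod_{j=1}^s \Bip(B_j),
$$
where $\mathcal{B}_{n_i}$ denotes a Boolean lattice of rank $n_i$ and each $B_j$ is one of the blocks appearing both as a nonunderlined block of $U$ and as an underlined block of $V$. By the product formula for the M\"obius function, we have
$$
\mu(U,V)=\prod_{i=1}^r \mu(\mathcal{B}_{n_i})\cdot \prod_{j=1}^s \mu(\Bip(B_j)).
$$

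Each M\"obius value on the right is now known. For the Boolean factor, $\mu(\mathcal{B}_{n_i})=(-1)^{n_i}$, which in particular equals $(-1)$ raised to the rank of $\mathcal{B}_{n_i}$. For the bipartition factor, Corollary~\ref{C_MobBip} gives $\mu(\Bip(B_j))=(-1)^{|B_j|}$; since by Corollary~\ref{cor:rank} the rank of $\Bip(B_j)$ is $3|B_j|-2$, which has the same parity as $|B_j|$, we again obtain $(-1)$ raised to the rank of the factor. Thus each factor contributes $(-1)^{\text{rank of factor}}$, and because the rank of a direct product is the sum of the ranks of the factors, while $\rank(V)-\rank(U)$ is precisely this sum, the product collapses to $(-1)^{\rank(V)-\rank(U)}$.

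There is no real obstacle here; the only thing to check carefully is the parity match $3m-2\equiv m\pmod 2$ for the $\Bip(B_j)$ factors, together with the observation that the isomorphism of Proposition~\ref{P_regprod} preserves rank (which is immediate since it maps covers to covers and identifies $\widehat{0}$ with $U$ and $\widehat{1}$ with $V$).
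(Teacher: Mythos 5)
Your proposal is correct and follows essentially the same route as the paper: apply Proposition~\ref{P_regprod} to reduce to a direct product, invoke the multiplicativity of the M\"obius function and the additivity of rank over products, use the known values $\mu(\mathcal{B}_m)=(-1)^m$ and $\mu(\Bip(B))=(-1)^{|B|}$ from Corollary~\ref{C_MobBip}, and finish with the parity observation $3m-2\equiv m\pmod 2$. Nothing to add.
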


\begin{proof}
By \cite[Prop.~3.8.2]{Stanley-EC1}, the M\"obius function behaves
multiplicatively for products of posets. Furthermore, it is well-known
that the M\"obius function of the minimum and maximum element in a
Boolean lattice of rank $m$ is equal to $(-1)^m$. Finally, by
Corollary~\ref{C_MobBip}, we also know that the M\"obius function of the
minimum and maximum element in a bipartition lattice of rank $3n-2$
is equal to $(-1)^n=(-1)^{3n-2}$. If we put all this together and also
recall that the rank function of products of posets is additive, we
obtain the claim.
\end{proof}

Our final theorem says that irregular intervals have a
contractible order complex.

\begin{theorem}
\label{T_irreg}
If $[U,V]\subseteq\Bip(\{1,2,\dots,n\})$ is not regular,
then the order complex
$\triangle([U,V]\setminus \{U,V\})$ is contractible.
In particular, the M\"obius function $\mu(U,V)$ vanishes in 
$\Bip(\{1,2,\dots,n\})$.
\end{theorem}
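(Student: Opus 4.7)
The plan is to adapt the strategy of Section~\ref{s_toc} to the interval $[U,V]$, constructing an enumeration of maximal chains that grows by creating skipped intervals, and then to show that under irregularity no maximal chain contributes a critical cell. Theorem~\ref{thm:Forman} will then yield contractibility, and Philip Hall's theorem will give $\mu(U,V)=0$.

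Writing $U=U(B_1^{\eps_1},\dots,B_k^{\eps_k})$, $V=U(C_1^{\eta_1},\dots,C_l^{\eta_l})$, and letting $(D_1,\dots,D_m)$ be the ordered partition defined by $\underline{U}(D_1,\dots,D_m)=\underline{U}(B_1,\dots,B_k)\wedge \underline{U}(C_1,\dots,C_l)$, I would first enumerate the permutations $\sigma$ refining $(D_1,\dots,D_m)$ via the generalized Johnson--Trotter algorithm of Section~\ref{sec:JT}; by Corollary~\ref{cor:refine}, these are exactly the permutations for which $[U,V]_\sigma$ is nonempty. For each such $\sigma$, the interval $[U,V]_\sigma$ is distributive (Theorem~\ref{T_dist}) and inherits an $EL$-labelling by its join-irreducible elements; I would order the $\sigma$-compatible maximal chains lexicographically under this labelling (using a fixed linear extension of the join-irreducibles analogous to \eqref{E_linext}), and concatenate across different $\sigma$ in Johnson--Trotter order. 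An adaptation of Lemma~\ref{L_skip} that uses part~(iii) of Theorem~\ref{T_Trotterdec} shows that this enumeration grows by creating skipped intervals in the sense of Definition~\ref{D_ecsi}, so the machinery of Theorem~\ref{T_BH1} applies.

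The crux of the argument is to prove that, when $[U,V]$ is irregular, no maximal chain $c$ has its system $J(c)$ covering $\{1,\dots,\rank(V)-\rank(U)-1\}$. Following the inductive structural analysis (properties (a)--(c) and Cases~1--3) in the proof of Theorem~\ref{T_fullc}, any chain with this covering property must be compatible with the last permutation $\hat\tau$ in the Johnson--Trotter enumeration of refinements of $(D_1,\dots,D_m)$, and must be the lex-first chain of $[U,V]_{\hat\tau}$, i.e.\ its associated $EL$-word has no descents. Let $x\in X$ be an element witnessing the irregularity: $(x,x)\in V\setminus U$, with $B$ the nonunderlined block of $U$ containing $x$ and $C$ the underlined block of $V$ containing $x$; by Definition~\ref{def:regular} and Proposition~\ref{P_cont}, one has the strict containment $B\subsetneq C$. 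Using the code representation of $[U,V]_{\hat\tau}$ provided by Theorem~\ref{T_repr}, I would classify the join-irreducibles of $[U,V]_{\hat\tau}$ and show that the presence of an element $y\in C\setminus B$ forces the rank segment governing the transition from the $B$-portion of $U$ to the $C$-portion of $V$ to contain join-irreducibles that do not assemble into the telescoping $E$--$F$--$G$ pattern exploited in Theorem~\ref{T_fullc}. Consequently, the minimal intervals produced by the analog of Lemma~\ref{L_skip} leave at least one rank uncovered, contradicting the critical-cell criterion of Theorem~\ref{T_BH1}.

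The main obstacle I anticipate is this last combinatorial step: making precise the structural distinction between regular and irregular transitions at the level of join-irreducibles of $[U,V]_{\hat\tau}$, and tracking it through the $I\to J$ construction of Definition~\ref{D_jint}. A case analysis on how the blocks of $U$ and $V$ are aligned on $C$ (split into sub-cases according to whether $C\setminus B$ comes from underlined blocks of $U$, from splitting further nonunderlined blocks of $U$, or both) appears unavoidable, and is the delicate analog of the tiling analysis in Theorem~\ref{T_fullc}. Once this is in place, contractibility of $\triangle([U,V]\setminus\{U,V\})$ and the vanishing of $\mu(U,V)$ in $\Bip(\{1,2,\dots,n\})$ follow immediately.
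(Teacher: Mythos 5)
Your high-level scaffolding --- build a chain enumeration by composing Johnson--Trotter over compatible $\sigma$ with the $EL$-ordering of each $[U,V]_\sigma$, verify growth by skipped intervals via Theorem~\ref{T_Trotterdec}(iii), apply Theorem~\ref{T_BH1} and Theorem~\ref{thm:Forman} --- matches the paper's plan, but the way you propose to rule out critical cells has a genuine gap, and it sits exactly at the place you yourself flag as ``the main obstacle.'' You want a \emph{fixed} linear extension of the join-irreducibles of $[U,V]_\sigma$ ``analogous to \eqref{E_linext}'' for every $\sigma$, then re-run the telescoping analysis of Theorem~\ref{T_fullc} to conclude that the only candidate for a critical chain is the lex-first chain of $[U,V]_{\hat\tau}$. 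That reduction is illegitimate in the interval setting: the argument forcing $\sigma=\hat\tau$ (property (b)) and a descent-free word (properties (a), (c)) depends crucially on every $E(\sigma,q)$, $F(\sigma,q)$, $G(\sigma,q)$ being a join-irreducible of $\Bip_\sigma(X)$, so that the type-(ii) intervals $[E(\sigma,q),G(\sigma,q-1)]$ exist for all $q$ and can be chained end to end. In $[U,V]_\sigma$ many of these elements lie below $U$ or above $V$ and so are simply unavailable; the induction breaks down, and there is no reason why critical-chain candidates funnel into a single $\hat\tau$-compatible chain.

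What the paper actually does (Lemma~\ref{L_nocrit}) is the opposite of fixing one extension: for \emph{each} $\sigma$ separately it picks a linear extension tailored to a position $p$ witnessing the irregularity --- placing $E(\sigma,p)$ first when $E(\sigma,p)$ is a join-irreducible of $[U,V]_\sigma$ but $G(\sigma,p-1)$ is not, or placing $G(\sigma,p-1)$ last in the dual situation --- and then shows that \emph{no} maximal chain of $[U,V]_\sigma$ contributes a critical cell, for every $\sigma$. The hypotheses of Lemma~\ref{L_nocrit} (exactly one of $E(\sigma,p)$, $G(\sigma,p-1)$ is a join-irreducible of $[U,V]_\sigma$, and at least one of $F(\sigma,p-1)$, $F(\sigma,p)$ is) are then verified in the proof of Theorem~\ref{T_irreg} by a four-case analysis on how the witness blocks $B$ of $U$ and $C$ of $V$ overlap along $\sigma$. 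The freedom to vary the extension with $\sigma$ is essential: it lets one sandwich the leftmost (resp.\ rightmost) ascent of a putative critical chain between the forced position of $E(\sigma,p)$ (resp.\ $G(\sigma,p-1)$) and some $E(\sigma,q)$ (resp.\ $G(\sigma,q-1)$), producing a descent where none is allowed. With a fixed extension that maneuver is unavailable, and the single-chain strategy you fall back on is not supported by the structure of $[U,V]_\sigma$.
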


We show Theorem~\ref{T_irreg} by adapting the proof of
Theorem~\ref{T_fullc}. Again, we need to define a listing of all
maximal chains of $[U,V]$ to which the results of
Section~\ref{sec:Morse} are applicable. As in Section~\ref{s_toc}, the
construction of this listing involves three steps.

\smallskip
{\sc Step 1.}
We list the order complexes $\triangle([U,V]_\sigma\setminus\{U,V\})$,
where $\sigma$ is a permutation of $X$ such that $U$ and $V$ are
$\sigma$-compatible, using the Johnson--Trotter decomposition of
$\triangle([U,V]\setminus\{U,V\})$ as defined in
Definition~\ref{D_Trotterdec}.

\smallskip
{\sc Step 2.}
By Theorem~\ref{T_dist}, for a fixed $\sigma$, the lattice
$\Bip_\sigma(X)$ is distributive, and, hence, also the subposet
$[U,V]_\sigma$ is distributive. As in Step~2 in Section~\ref{s_toc}, 
$[U,V]_\sigma$ has an $EL$-labelling using its join-irreducible
elements, due to \cite[Th.~4.5]{BjoeAA}. 

Evidently, the 
join-irreducible elements of $[U,V]_{\sigma}$ may
be identified with those elements $E(\sigma,i)$, $F(\sigma,i)$, and
$G(\sigma,i)$ which are contained in $V$ but not in $U$. More
precisely, a join-irreducible element $H\in\Bip_\sigma(X)$ is
identified with the join-irreducible element $U\vee H\in[U,V]_\sigma$. 
In the sequel, by abuse of terminology, 
when we speak of ``the join-irreducible elements of $[U,V]_{\sigma}$,"
then we shall always mean the join-irreducible elements
$H\in\Bip_\sigma(X)$ which are contained in $V$ but not in $U$, keeping the
above identification in mind.

For defining the $EL$-labelling, however, we need to start with a
linear extension of the join-irreducible elements of $[U,V]_\sigma$.
Unlike in Section~\ref{s_toc}, we select a different linear extension
for each $\sigma$, the individual choices being independent from each
other. Lemma~\ref{L_nocrit} below describes the details of these
choices. 

Analogously to Section~\ref{s_toc}, we associate to each maximal chain 
$c: U= U_0\lessdot \cdots \lessdot U_{m}=V$
contained in $[U,V]_\sigma$
the word $z_1z_2\cdots z_{m}$ (here $m+1$ is the rank of
$[U,V]_{\sigma}$), where the
letter $z_i$ is the unique join-irreducible element of
$\Bip_\sigma(X)$ contained in $U_i$ but not
contained in $U_{i-1}$.
We list the maximal chains in $[U,V]_\sigma$ according to the
lexicographic order of their associated words.

\smallskip
{\sc Step 3.}
Assume that $\triangle([U,V]_\sigma\setminus\{U,V\})$ and
$\triangle([U,V]_{\sigma'}\setminus\{U,V\})$ appear in the J--T
decomposition of $\triangle([U,V]\setminus\{U,V\})$.
Given a $\sigma$-compatible maximal chain $c$ and a
$\sigma'$-compatible maximal chain $c'$, the chain $c$ precedes $c'$ if
and only if either $\triangle([U,V]_\sigma\setminus\{U,V\})$ precedes 
$\triangle([U,V]_{\sigma'}\setminus\{U,V\})$ in the J--T
decomposition, or if $\sigma=\sigma'$ and $c$ precedes $c'$ in the ordering
of $[U,V]_\sigma$ described in Step~2.

\medskip

\begin{lemma}
\label{L_nocrit}
Let $[U,V]$ be an interval in $\Bip(\{1,2,\dots,n\})$, and let 
$\sigma=(\{\sigma_1\},\{\sigma_2\},\break \dots,\{\sigma_n\})$ be
a permutation such that $\triangle([U,V]_{\sigma}\setminus \{U,V\})$ appears
in the J--T decomposition of $\triangle([U,V]\setminus \{U,V\})$. Assume
that some $p\in \{2,\ldots,n\}$ has the following properties:
\begin{itemize}
\item[(a)] 
Exactly one of $E(\sigma,p)$ and $G(\sigma,p-1)$ is contained
   in $V$ but not in $U$.
\item[(b)] 
At least one of $F(\sigma,p-1)$ and $F(\sigma,p)$ is
   contained in $V$ but not in $U$.
\end{itemize}
Then there is a linear extension of the join-irreducible elements of
$[U,V]_{\sigma}$ such that, no matter what linear extension we select
for the other subcomplexes $\triangle([U,V]_{\pi}\setminus
\{U,V\})$, no maximal
chain of $[U,V]_{\sigma}$ contributes a critical cell.
\end{lemma}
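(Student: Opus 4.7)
The plan is to adapt the critical-cell analysis from the proof of Theorem~\ref{T_fullc} to the sublattice $[U,V]_\sigma$. I choose the linear extension on the join-irreducible elements of $[U,V]_\sigma$ to be the restriction of the extension \eqref{E_linext} to those $E(\sigma,i)$, $F(\sigma,i)$, $G(\sigma,i)$ that belong to $V\setminus U$.

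First I would prove an analog of Lemma~\ref{L_skip} for $[U,V]_\sigma$. For a maximal chain $c$ of $[U,V]_\sigma$ with associated word $z_1z_2\cdots z_r$, the minimal intervals in $I(c)$ consist of the descent singletons together with intervals $[i,j]$ such that $z_i=E(\sigma,q)$, $z_{j+1}=G(\sigma,q-1)$ for some $q$ (with both endpoints necessarily lying in $V\setminus U$), the substring $z_iz_{i+1}\cdots z_{j+1}$ is ascending, and the permutation obtained from $\sigma$ by transposing the adjacent blocks containing $\sigma_{q-1}$ and $\sigma_q$ precedes $\sigma$ in the Johnson--Trotter enumeration of permutations refining $(D_1,\ldots,D_m)$ (in the sense of Definition~\ref{D_Trotterdec}). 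The argument follows the same pattern as that of Lemma~\ref{L_skip}, with Theorem~\ref{T_Trotterdec}(iii) playing the role of its $\Bip(X)$-analog.

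Next, suppose for contradiction that some maximal chain $c$ of $[U,V]_\sigma$ contributes a critical cell, so that $I(c)$ covers $\{1,\ldots,r-1\}$. I would then mimic the inductive argument from the proof of Theorem~\ref{T_fullc}---the one establishing the three properties (a)--(c)---adapted to the restricted set of join-irreducibles. Concretely, one shows by induction on $k$ that the letters of $\bigcup_{j\leq k}[E(\sigma,j),G(\sigma,j-1)]$ that are present in $V\setminus U$ appear in ascending order in the word, and that each admissible pair $(E(\sigma,j),G(\sigma,j-1))$ with $j\leq k$ does form a type-(ii) covering interval. The induction breaks precisely at $k=p$: by (a), one of $E(\sigma,p),G(\sigma,p-1)$ is missing from the join-irreducibles, so no type-(ii) interval with $q=p$ exists; by (b), at least one of $F(\sigma,p-1),F(\sigma,p)$ is a join-irreducible and must occur somewhere in the word. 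The structural constraints from the induction force this $F$-letter to sit at a position that is an ascent (hence not covered by a descent singleton) and that cannot be contained in a type-(ii) interval with $q<p$ (which must end at the $G(\sigma,q-1)$ preceding the $F$) nor in a type-(ii) interval with $q>p$ (which must begin at an $E(\sigma,q)$ following the $F$). This position is therefore uncovered by $I(c)$, yielding the desired contradiction.

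The main obstacle will be the detailed case analysis required to pin down the problematic position. There are four subcases---depending on which of $E(\sigma,p)$ versus $G(\sigma,p-1)$ is missing and which of $F(\sigma,p-1)$ versus $F(\sigma,p)$ is present---and in each one must verify not only that the identified position is uncovered by type-(ii) intervals of ``nearby'' index $q$, but also that no ``long'' type-(ii) interval reaching over many indices could cover it. The latter is controlled by the induction, but requires careful bookkeeping of the Johnson--Trotter admissibility condition on swap permutations within the J--T decomposition, together with the interplay between the restricted linear extension and the structure of the poset of join-irreducibles depicted in Figure~\ref{fig:bipji}.
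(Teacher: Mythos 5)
Your proposal diverges from the paper's proof at the most crucial point: the choice of linear extension. You fix the linear extension to be the \emph{restriction} of the canonical \eqref{E_linext}, and then plan to replay the induction from the proof of Theorem~\ref{T_fullc}. The paper instead \emph{adapts} the linear extension to $p$: in the case where $E(\sigma,p)\in V\setminus U$ but $G(\sigma,p-1)\notin V\setminus U$, it chooses a linear extension in which $E(\sigma,p)$ is the \emph{least} element (possible since $E(\sigma,p)$ is minimal in the join-irreducible poset of Figure~\ref{fig:bipji}); in the other case it makes $G(\sigma,p-1)$ the \emph{largest}. This adapted choice yields a short, induction-free contradiction: take the leftmost ascent in the substring $E(\sigma,p)\cdots F(\sigma,p)$, observe that it must be covered by an interval $[i,j]$ with $z_i=E(\sigma,q)$, $q\neq p$, use $E(\sigma,p)\prec E(\sigma,q)$ (forced by minimality), and split on the relative positions of $E(\sigma,p)$ and $E(\sigma,q)$: both positions are impossible. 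The second case is symmetric with the rightmost ascent.

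Your restricted-\eqref{E_linext} choice does not supply this minimality, so the two-position dichotomy breaks (when $q<p$ one has $E(\sigma,q)\prec E(\sigma,p)$, and the ``ascent to the left of $[i,j]$, contradiction'' step no longer applies). To compensate you appeal to the $(a)$--$(c)$ induction of Theorem~\ref{T_fullc}, but that induction relied on \emph{all} $3n-2$ join-irreducibles being available. In a proper interval $[U,V]_\sigma$ the induction step for small $k$ (e.g.\ $k=1\to 2$) already requires $E(\sigma,2)$ and $G(\sigma,1)$ to be present; the hypotheses of the lemma say nothing about indices $j<p$, so the induction may fail to even start, or may break at an index $k_0<p$ in a way that is not obviously equivalent to the desired contradiction, because at a generic break point one would still have to rule out coverage by a ``long'' type-(ii) interval associated with some distant $q$. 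You flag this bookkeeping explicitly and leave it open; in the paper there is no such bookkeeping because the adapted linear extension collapses the analysis to a single leftmost (or rightmost) ascent. So the missing idea is concrete and essential: choose a $p$-adapted linear extension rather than the canonical restriction, and the remaining argument becomes a one-shot position analysis rather than an induction.
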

\begin{proof}
Consider first the case where $E(\sigma,p)$ is contained in $V$ but
not in $U$, and $G(\sigma,p-1)$ is either contained in $U$ or
not contained in $V$. Without loss of generality, we may assume that
$F(\sigma,p)$ is contained in $V$ but not in $U$
(otherwise we may simply replace $F(\sigma,p)$ by $F(\sigma,p-1)$ in
the subsequent argument). This means that, when we label the cover
relations $U_1\lessdot V_1$ in $[U,V]_{\sigma}$ by the unique
join-irreducible element of $\Bip_\sigma(\{1,2,\dots,n\})$ that is
contained in $V_1$ but not in $U_1$, the elements
$E(\sigma,p)$ and $F(\sigma,p)$ appear among the labels used but
$G(\sigma,p-1)$ does not. Select a linear extension of the
join-irreducible elements of $[U,V]_\sigma$ (recall the convention
explained in Step~2 after the statement of Theorem~\ref{T_irreg}), 
in which $E(\sigma,p)$ is the least element. This is possible
since $E(\sigma,p)$ is a minimal element among the join-irreducible
elements. We
claim that for this labelling, no maximal chain of $[U,V]_{\sigma}$
contributes a critical cell.  

Consider a maximal chain $c$ in
$[U,V]_{\sigma}$, and assume by way of contradiction that it contributes
a critical cell. By Theorem~\ref{T_BH1}, this means that the set of
intervals $I(c)$ covers all elements of the set of ranks of $[U,V]$.
Let $z_1z_2\cdots z_m$ be the word associated to $c$ according to
Step~2 after Theorem~\ref{T_irreg}. In analogy to Lemma~\ref{L_skip}, a chain contained
in $c$ is also contained in an earlier listed maximal chain if and only
if the set of its ranks is disjoint from at least one of the following
intervals:
\begin{itemize}
\item[(i)] all singletons $[i,i]=\{i\}$ such that $z_i\succ z_{i+1}$,
\item[(ii)] all intervals $[i,j]$ with $z_i=E(\sigma,q)$,
   $z_{j+1}=G(\sigma,q-1)$, for some $q$, such that the permutation 
$\pi$ obtained from $\sigma$ by exchanging
   the adjacent blocks $\{\sigma_{q-1}\}$ and $\{\sigma_{q}\}$ makes
$\triangle([U,V]_{\pi}\setminus \{U,V\})$ precede
   $\triangle([U,V]_{\sigma}\setminus \{U,V\})$ in the J--T
   decomposition of $\triangle([U,V]\setminus \{U,V\})$ as described
in Definition~\ref{D_Trotterdec}.
\end{itemize}
The proof is essentially the same, and is thus omitted.

Next we have to determine the subset $I(c)$ of the above intervals which
are minimal with respect to inclusion. Clearly, $I(c)$ contains all
singletons listed under (i). In the same way as in the proof of
Theorem~\ref{T_fullc}, it can be seen that
an interval $[i,j]$ listed in item~(ii)
belongs to $I(c)$ only if we have
$z_i\prec z_{i+1}\prec \cdots\prec z_{j+1}$. (Note that we do not have
an ``if and only if'' statement anymore, since the ``if'' part in the
proof of Theorem~\ref{T_fullc} followed from the particular choice of
the linear extension \eqref{E_linext} of the poset of join-irreducible
elements, which we do not and need not guarantee in the
present situation.)

Since $E(\sigma,p)<F(\sigma,p)$  holds in $\Bip(\{1,2,\dots,n\})$,
the letter $E(\sigma,p)$ must appear before the letter $F(\sigma,p)$
in $z_1z_2\cdots z_m$. Moreover, because of 
$E(\sigma,p)\prec F(\sigma,p)$, the
substring $E(\sigma,p)\cdots F(\sigma,p)$  contains an ascent. Let
the leftmost such ascent be at position $l$, so that we encounter
$$
E(\sigma,p)\cdots z_lz_{l+1}\cdots F(\sigma,p),
$$
with $z_l\prec z_{l+1}$.
The position $l$ of the ascent is not covered by a
singleton listed under (i) above, thus it must be
covered by a minimal interval $[i,j]$ listed under (ii) above.
In particular, we have $i\le l$.
The interval $[i,j]$ is associated to $z_i=E(\sigma,q)$ and
$z_{j+1}=G(\sigma,q-1)$ for some $q\neq p$ since $G(\sigma,p-1)$ is not
a 
join-irreducible element in $[U,V]_{\sigma}$. Since the interval
$[i,j]$ is minimal, the substring
$E(\sigma,q)\cdots G(\sigma,q-1)$ cannot contain any descents.
By our choice of the linear extension of the join-irreducible
elements of $[U,V]_\sigma$,
we have $E(\sigma,p)\prec E(\sigma,q)$. 
Consider now the relative position of the letters $E(\sigma,p)$ and
$E(\sigma,q)$.
If the letter $E(\sigma,q)$ appears
after $E(\sigma,p)$ in $z_1z_2\cdots z_m$, then we encounter
$$
E(\sigma,p)\cdots E(\sigma,q)\cdots z_lz_{l+1}\cdots F(\sigma,p).
$$
(It is allowed
that $z_l=E(\sigma,q)$ or $z_{l+1}=F(\sigma,p)$.)
Then the substring
$E(\sigma,p)\cdots E(\sigma,q)$ contains an ascent which is not
covered by the
interval $[i,j]$ (recall that $z_i=E(\sigma,q)$), 
in contradiction to having selected the leftmost
ascent in the substring $E(\sigma,p)\cdots F(\sigma,p)$. 
On the other hand, if the letter
$E(\sigma,q)$ appears before $E(\sigma,p)$ in $z_1z_2\cdots z_m$ then
we encounter
$$
E(\sigma,q)\cdots E(\sigma,p)\cdots z_lz_{l+1}\cdots G(\sigma,q-1).
$$
(It is allowed
that $z_l=E(\sigma,p)$ or $z_{l+1}=G(\sigma,q-1)$.)
Because of
$E(\sigma,q)\succ E(\sigma,p)$, there is a descent in the substring
$E(\sigma,q)\cdots E(\sigma,p)$, in contradiction to the fact that the substring
$E(\sigma,q)\cdots G(\sigma,q-1)$ does not contain any descents.

\medskip
Consider now the case where $G(\sigma,p-1)$ is contained in $V$ without
being contained in $U$, and $E(\sigma,p)$ is either contained in $U$ or
not contained in $V$. This case is similar to the previous one, thus we only
outline the necessary changes. Without loss of generality, we may assume that
$F(\sigma,p)$ is contained in $V$, without being contained in $U$.
Take a linear extension of the partial order on the set of
join-irreducible elements of $[U,V]_\sigma$ such
that $G(\sigma,p-1)$ is the maximal element, and consider the word
$z_1z_2\cdots z_m$ associated to a maximal chain
$c$ that contributes a critical cell. Again, by Theorem~\ref{T_BH1}, 
this means that the set of intervals $I(c)$ covers all elements of 
the set of ranks of $[U,V]$. Since
$F(\sigma,p)< G(\sigma,p-1)$ holds in $\Bip(\{1,2,\dots,n\})$, the letter
$F(\sigma,p)$ must appear before the letter $G(\sigma,p-1)$ in
$z_1z_2\cdots z_m$, and the substring $F(\sigma,p)\cdots G(\sigma,p-1)$
contains an ascent. Consider the {\it rightmost\/} such ascent. This ascent must
be covered by an interval $[i,j]$ where $z_i=E(\sigma,q)$ and
$z_{j+1}=G(\sigma,q-1)$ for some $q\neq p$ since $E(\sigma,p)$ is not a
join-irreducible element in $[U,V]_{\sigma}$. Whether the letter $G(\sigma,p-1)$ appears before or after
$G(\sigma, q-1)$ in $z_1z_2\cdots z_m$, we obtain a contradiction, by either
finding an ascent in the substring $G(\sigma,p-1)\cdots G(\sigma, q-1)$
to the right of the supposedly rightmost ascent in
$F(\sigma,p)\cdots G(\sigma,p-1)$, or we find a descent in
$G(\sigma,q-1)\cdots G(\sigma, p-1)$ in contradiction to the
substring $E(\sigma,q)\cdots G(\sigma,q-1)$ not containing any descents.
\end{proof}

\begin{proof}[Proof of Theorem~\ref{T_irreg}]
Assume that $[U,V]$ is irregular. Then there is an
$x\in\{1,2,\break\dots,n\}$ such that $(x,x)\in V\setminus U$ and the block $B$
of $x$ in $U$ is not equal to the block $C$ of $x$ in $V$. 
The goal is to construct an enumeration of all maximal chains of
$[U,V]$ using Steps~1--3 after the statement of the theorem such that
no maximal chain contributes a critical cell according to
Theorem~\ref{T_BH1}. The only undetermined place in Steps~1--3
concerned the choice of linear extension of the join-irreducible elements of
$[U,V]_\sigma$. 

Let $\sigma$
be an arbitrary 
permutation such that $U$ and $V$ are $\sigma$-compatible. 
It suffices to
find a $p\in\{2,\ldots,n\}$ which satisfies the criteria given
in Lemma~\ref{L_nocrit}. List the elements $\sigma_1,\ldots,\sigma_n$,
in this order. The elements of $B$ and $C$ form sublists of consecutive
elements: $B=\{\sigma_{i},\sigma_{i+1},\ldots,\sigma_{j}\}$ and
$C=\{\sigma_{k},\sigma_{k+1},\ldots,\sigma_{l}\}$, for some $i,j,k,l\in
\{1,2,\ldots, n\}$, where $i\leq j$ and $k\leq l$, and where 
the intersection of
the intervals $[i,j]$ and $[k,l]$ is not empty since $x\in B\cap C$.
Thus, since $B\neq C$, one of the following four
situations arises:

\medskip {\sc Case 1.} $i<k$. In this case, we have
$\{\sigma_{k-1},\sigma_k\}\subseteq B$ but
$\{\sigma_{k-1},\sigma_k\}\cap C=\{\sigma_k\}$. Thus,
$G(\sigma,k-1)\not\subseteq V$, while $E(\sigma,k)$ is contained in $V$,
without being contained in $U$. Similarly, $F(\sigma,k)$ is contained in $V$,
without being contained in $U$. We set $p=k$.

\medskip {\sc Case 2.} $i>k$. In this case, we have
$\{\sigma_{i-1},\sigma_i\}\subseteq C$ but
$\{\sigma_{i-1},\sigma_i\}\cap B=\{\sigma_i\}$. Thus,
$E(\sigma,i)\subseteq U$, while $G(\sigma,i-1)$ is contained in $V$,
without being contained in $U$. Similarly, $F(\sigma,i)$ is contained in $V$,
without being contained in $U$. We set $p=i$.

\medskip {\sc Case 3.} $i=k$ and $j<l$. In this case, we have
$\{\sigma_{j},\sigma_{j+1}\}\subseteq C$ but
$\{\sigma_{j},\sigma_{j+1}\}\cap B=\{\sigma_j\}$. Thus,
$E(\sigma,j+1)\subseteq U$, while $G(\sigma,j)$ is contained in $V$,
without being contained in $U$. Similarly, $F(\sigma,j)$ is contained in $V$,
without being contained in $U$. We set $p=j+1$.

\medskip {\sc Case 4.} $i=k$ and $j>l$. In this case, we have
$\{\sigma_{l},\sigma_{l+1}\}\subseteq B$ but
$\{\sigma_{l},\sigma_{l+1}\}\cap C=\{\sigma_l\}$. Thus,
$G(\sigma,l)\not\subseteq V$, while $E(\sigma,l+1)$ is contained in $V$,
without being contained in $U$. Similarly, $F(\sigma,l)$ is contained in $V$,
without being contained in $U$. We set $p=l+1$.

\medskip
We may therefore apply Lemma~\ref{L_nocrit} to conclude that there is
an enumeration of the maximal chains of $[U,V]$ such that, by
Theorem~\ref{T_BH1}, there are no critical cells contributed by the
associated Morse matching. Consequently, by Theorem~\ref{thm:Forman},
the order complex $\triangle([U,V]\setminus \{U,V\})$ is contractible.
\end{proof}

\section*{Acknowledgments}
This project started in 1996 during the authors' stay at the
Mathematical Sciences Research Institute, Berkeley, during the
Combinatorics Program 1996/97, organized by
Louis Billera, Anders Bj\"orner, Curtis Greene, Rodica Simion, and
Richard Stanley. It was completed while the first author was on reassignment
of duties from UNC Charlotte, enjoying the hospitality of the University of
Vienna in Spring 2009. 
We are indebted to an anonymous referee for an extremely careful
reading of the original manuscript and for several valuable
suggestions and references concerning the Morse theory part of this
paper.

\end{document}